\tikzset{node distance=2em, ch/.style={circle,draw,on chain,inner sep=2pt},chj/.style={ch,join},every path/.style={shorten >=4pt,shorten <=4pt},line width=1pt,baseline=-1ex}
\newtheorem{corollary}{Corollary}
\newtheorem*{corollary*}{Korollar}
\newtheorem*{notation*}{Notation}
\newtheorem*{dank*}{Danksagung}
\newtheorem{lemma}{Lemma}
\newtheorem*{example*}{Example}
\newtheorem{theorem}{Theorem}
\newtheorem{proposition}{Proposition}
\theoremstyle{remark}
\newtheorem{remark}{Remark}
\theoremstyle{theorema}
\newtheorem*{theoremb}{Conjecture}
\newtheorem*{theoremd}{Acknowledgement}
\theoremstyle{definition}
\newtheorem{definition}{Definition}
\DeclareMathOperator{\rank}{rank}
\title[Combinatorics of free and simplicial line arrangements]
{Combinatorics of free and simplicial line arrangements}
\author{David~Geis}
\email{davidgeis@web.de}
\begin{document}

\begin{abstract}
We study the combinatorics of pseudoline arrangements in the real projective plane. Our focus lies on two classes of arrangements: simplicial arrangements and arrangements whose characteristic polynomials have only real roots. We derive inequalities involving the $t$-vectors of the arrangements in consideration. As application, we obtain some finiteness and classification results. Moreover, we are able to prove the Dirac Motzkin Conjecture for real pseudoline arrangements whose charateristic polynomials split over $\mathbb{R}$.
\end{abstract}


\maketitle

\begin{section}{Introduction}
Arrangements of pseudolines in the real projective plane are classical objects of study in combinatorics and geometry. In this note, we are interested in the combinatorics of two classes of such arrangements: arrangements whose associated cell decompositions of $\mathbb{P}^2(\mathbb{R})$ are triangulations (so called \textit{simplicial arrangements}) and arrangements whose \textit{characteristic polynomials have only real roots}. By Terao's Factorization Theorem, the latter class includes all pseudoline arrangements which originate from a \textit{free} hyperplane arrangement.  

Despite some major progress (see for instance the papers \cite{Cuntz}, \cite{Cuntz3}, \cite{Cuntz4}, \cite{cuntz_muecksch}), a complete classification of simplicial arrangements still remains an open problem. However, there is a catalogue published by Gr\"unbaum (see \cite{Gruenbaum}), listing almost all currently known isomorphism classes of \textit{stretchable} arrangements. Since then, only four additional arrangements have been discovered (see the paper \cite{Cuntz}). 

The current belief is that -up to finitely many corrections- the given catalogue is complete. In this paper we collect some more evidence for this belief. In particular, we show that Gr\"unbaum's catalogue contains all arrangements which are free and whose vertices have weight bounded by four (see Corollary \ref{hoechstens 4er}). Similarly, we prove that a free simplicial pseudoline arrangement whose vertices have weight bounded by five consists of at most $40$ lines (see Corollary \ref{hoechstens fuenfer}). This implies that there is only a finite number of such arrangements possibly missing in Gr\"unbaum's catalogue.  

Our techniques also allow us to prove finiteness results for line arrangements which are not necessarily simplicial. For instance, we prove that there are only finitely many isomorphism classes of free line arrangements in $\mathbb{P}^2(\mathbb{R})$ whose vertices have weight bounded by five (see Theorem \ref{no simp theo}). 

Motivated by the paper \cite{greentao}, we also study the Dirac Motzkin Conjecture in the context of pseudoline arrangements whose characteristic polynomials split over $\mathbb{R}$. We are able to resolve the conjecture completely in this setup, including a classification of all extremal examples (see Theorem \ref{dirac theorem}).

Moreover, we prove a combinatorial analogue of the fact that hyperplane arrangements in $\mathbb{R}^3$ having isometric chambers are Coxeter arrangements (see Theorem \ref{cox theorem} and the paper \cite{Ehrenborg}).

\begin{theoremd}
This paper is a result of my stay at the Institut f\"ur Algebra, Zahlentheorie und Diskrete Mathematik at Leibniz Universit\"at Hannover.
I wish to thank Michael Cuntz for many helpful discussions. Moreover, I thank Piotr Pokora for some comments on an earlier version of this paper. Finally, I wish to thank the Deutsche Forschungsgemeinschaft (DFG) for their financial support.  
\end{theoremd}

\end{section}

\begin{section}{Definitions and well known results}
We start with the definition of pseudoline arrangements. For more background on the general concept of oriented matroids, we refer the reader to \cite{matroids}. For us, the following will suffice.
\begin{definition}
i) An arrangement of pseudolines is a finite set $\mathcal{A}$ of $n \geq 3$ smooth closed curves in $\mathbb{P}^2(\mathbb{R})$ such that the following conditions are satisfied: \begin{itemize}
 \item Curves in $\mathcal{A}$ do not intersect themselves.
 \item Different curves in $\mathcal{A}$ meet each other transversally in precisely one point.
 \item We have $\bigcap_{\ell \in \mathcal{A}} \ell = \lbrace \rbrace$, i.e. $\mathcal{A}$ is not a pencil arrangement.  
\end{itemize} 
Any arrangement $\mathcal{A}$ induces a cell decomposition of the real projective plane. We denote by $\mathcal{K}(\mathcal{A})$ the set of $2$-cells in said decomposition. These are usually called \textit{chambers}. Similarly, the $1$-cells are called \textit{edges} or \textit{segments} while the $0$-cells are called \textit{vertices}. We write $f^\mathcal{A}_0, f^\mathcal{A}_1, f^\mathcal{A}_2$ for the number of vertices, edges, chambers of $\mathcal{A}$ respectively. Two arrangements $\mathcal{A}, \mathcal{A}^\prime$ are called \textit{isomorphic}, if the corresponding cell decompositions are isomorphic; in this case, if $\mathcal{A}$ is an arrangement of straight lines, then $\mathcal{A}^\prime$ is called \textit{stretchable}. \\ 
ii) Let $\mathcal{A}$ be an arrangement of pseudolines in $\mathbb{P}^2(\mathbb{R})$. Then $\mathcal{A}$ is called \textit{simplicial} if every $C \in \mathcal{K}(\mathcal{A})$ is bounded by precisely three lines $\ell_1, \ell_2, \ell_3 \in \mathcal{A}$.  \\
iii) Let $n:=|\mathcal{A}|$ and choose a labelling $\mathcal{A}=\lbrace \ell_1, \ell_2, ..., \ell_n \rbrace$ for the lines of $\mathcal{A}$. We will associate with $\mathcal{A}$ a geometric lattice $L:=L_\mathcal{A}$. For this, define sets $L_0, L_1, L_2, L_3$  as follows: \begin{itemize}
 \item $L_0$ consists of the single element $\lbrace  \rbrace$.
 \item $L_1$ consists of the elements $\lbrace i \rbrace$ for $1 \leq i \leq n$.
 \item For each $v \in \mathbb{P}^2(\mathbb{R})$ we set $I_v:=\lbrace i \mid 1 \leq i \leq n,v \in \ell_i \rbrace$. Now we define $L_2:= \lbrace I_v \mid v \in \mathbb{P}^2(\mathbb{R}), |I_v| \geq 2 \rbrace$  
 \item $L_3$ consists of the single element $\lbrace 1,2, ..., n \rbrace$.
\end{itemize}
Now define $L:=L_0 \cup L_1 \cup L_2 \cup L_3$. It is immediate that $L$ is a poset via setwise inclusion. We have a natural rank function $r$ on $L$, which is described as follows: $r(X)=i$ if and only if $X \in L_i$. Together with this rank function, the poset $L$ becomes a geometric lattice. It is clear that different labellings for the lines of $\mathcal{A}$ will lead to isomorphic lattices. Therefore, we may define the \textit{characteristic polynomial of} $\mathcal{A}$ as the characteristic polynomial of $L$, i.e. we set $\chi(\mathcal{A},t):=\sum_{X \in L} \mu(\lbrace \rbrace, X) t^{3-\rank(X)}$, where $\mu$ denotes the M\"obius Function of $L$.
\end{definition}

We continue by introducing the main tool of this paper, the so called \textit{t-vector} associated to an arrangement of pseudolines.

\begin{definition}
Let $\mathcal{A}$ be an arrangement of pseudolines in $\mathbb{P}^2(\mathbb{R})$. \\
i) For each $v \in \mathbb{P}^2(\mathbb{R})$ we define $w_{\mathcal{A}}(v):=|\lbrace \ell \in \mathcal{A} \mid v \in \ell \rbrace| \in \mathbb{N}_{\geq 0}$ and call it the \textit{weight} of $v$ (with respect to $\mathcal{A}$). Points $v \in \mathbb{P}^2(\mathbb{R})$ such that $w_{\mathcal{A}}(v)=2$ are sometimes called \textit{double points (of} $\mathcal{A}${)}. Similarly, points $v$ with $w_{\mathcal{A}}(v)=3$ are called \textit{triple points (of} $\mathcal{A}${)}. Let $C \in \mathcal{K}(\mathcal{A})$ be a chamber. We associate with $C$ a graph $\Gamma^C$ defined as follows: the vertices of $\Gamma^C$ are given by the lines of $\mathcal{A}$ which bound $C$ and two vertices $\ell, \ell^\prime$ are connected by an edge with weight $w_{\mathcal{A}}(\ell \cap \ell^\prime)$ if and only if $w_{\mathcal{A}}(\ell \cap \ell^\prime) \geq 3$.  \\ 
ii)  For $2 \leq i \leq |\mathcal{A}|$ we define $t^\mathcal{A}_i :=| \lbrace v \in \mathbb{P}^2(\mathbb{R}) \mid w_{\mathcal{A}}(v)=i \rbrace|$. This means that $t^\mathcal{A}_i$ is the number of points in $\mathbb{P}^2(\mathbb{R})$ contained in precisely $i$ lines of $\mathcal{A}$. The vector $t^\mathcal{A} \in \mathbb{Z}^{|\mathcal{A}|-1}$ whose $i-1$-th component is given by $t^\mathcal{A}_{i}$ is called the \textit{t-vector} of $\mathcal{A}$. Moreover, if $j$ is maximal with the property $t^\mathcal{A}_j>0$, then we say that the \textit{multiplicity} of $\mathcal{A}$ is $j$ and we write $m(\mathcal{A}):=j$.    \\
iii) Assume that $n:=|\mathcal{A}| \geq 3$. If $\mathcal{A}$ is an arrangement such that $t^\mathcal{A}_2=n-1$ and $t^\mathcal{A}_{n-1}=1$ then $\mathcal{A}$ is called a \textit{near pencil (arrangement)}. Near pencil arrangements are usually considered trivial.
\end{definition}

The following lemma provides a collection of well known results on the $t$-vector of a pseudoline arrangement.

\begin{lemma} \label{basic comb t vector}
Let $\mathcal{A}$ be an arrangement of $n$ pseudolines in $\mathbb{P}^2(\mathbb{R})$. Then the following statements hold: \begin{align}
\sum_{i \geq 2} \binom{i}{2} t^\mathcal{A}_i&= \binom{n}{2}, \\
1 + \sum_{i \geq 2} (i-1)t^\mathcal{A}_i&=f^\mathcal{A}_2, \\
\sum_{i \geq 2} t^\mathcal{A}_i&=f^\mathcal{A}_0, \\
\sum_{i \geq 2} i t^\mathcal{A}_i&=f^\mathcal{A}_1, \\
3 + \sum_{i \geq 4} (i-3)t^\mathcal{A}_i  &\leq t^\mathcal{A}_2.
\end{align}
\end{lemma}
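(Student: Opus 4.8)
The plan is to handle the first four identities as routine incidence counts and the last one by Melchior's classical argument. The first identity counts pairs of pseudolines: two pseudolines meet in exactly one point, and a point of weight $i$ is the meeting point of $\binom{i}{2}$ such pairs, so summing over the vertices of $\mathcal{A}$ gives $\sum_{i\geq2}\binom{i}{2}t^{\mathcal{A}}_i=\binom{n}{2}$. The identity $f^{\mathcal{A}}_0=\sum_{i\geq2}t^{\mathcal{A}}_i$ holds because the vertices of $\mathcal{A}$ are precisely the points of weight at least $2$. For $f^{\mathcal{A}}_1=\sum_{i\geq2}i\,t^{\mathcal{A}}_i$ I would count vertex--edge incidences: a vertex of weight $i$ is incident to $2i$ edges and each edge has two endpoints, so $2f^{\mathcal{A}}_1=\sum_{i\geq2}2i\,t^{\mathcal{A}}_i$. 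Finally $f^{\mathcal{A}}_2=1+\sum_{i\geq2}(i-1)t^{\mathcal{A}}_i$ follows from the two previous identities together with Euler's relation $f^{\mathcal{A}}_0-f^{\mathcal{A}}_1+f^{\mathcal{A}}_2=1$ for the cell decomposition of $\mathbb{P}^2(\mathbb{R})$.

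The one genuinely new ingredient, needed for the last inequality, is: every chamber $C\in\mathcal{K}(\mathcal{A})$ has at least three edges on its boundary. A chamber is an open disk, so its boundary is a cyclically ordered sequence of edges and vertices, and it suffices to exclude sequences with one or two edges. A single boundary edge would force the pseudoline carrying it to be disjoint from every other pseudoline, which is impossible since $n=|\mathcal{A}|\geq3$ and distinct pseudolines always meet. A two-edge boundary would make $C$ a bigon: if its two edges lie on distinct pseudolines $\ell\neq\ell'$ these would pass through both vertices of $C$, contradicting that $\ell\cap\ell'$ is a single point; if they lie on one pseudoline $\ell$ then $\overline{C}$ would be a closed disk bounded by $\ell$, which is impossible because a pseudoline is non-contractible in $\mathbb{P}^2(\mathbb{R})$ --- a null-homotopic simple closed curve meets any other pseudoline in an even number of points, never in exactly one. (The remaining degenerate sub-cases, where the two vertices of the bigon coincide, force $\mathcal{A}$ to be a pencil, which is excluded by definition.)

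Granting the claim, write $e(C)$ for the number of boundary edges of a chamber $C$. Every edge borders exactly two chambers, so $\sum_{C\in\mathcal{K}(\mathcal{A})}e(C)=2f^{\mathcal{A}}_1$, and $e(C)\geq3$ gives $2f^{\mathcal{A}}_1\geq3f^{\mathcal{A}}_2$. Substituting $f^{\mathcal{A}}_1=\sum_{i\geq2}i\,t^{\mathcal{A}}_i$ and $f^{\mathcal{A}}_2=1+\sum_{i\geq2}(i-1)t^{\mathcal{A}}_i$ established above, this reads
\[
2\sum_{i\geq2}i\,t^{\mathcal{A}}_i \;\geq\; 3+3\sum_{i\geq2}(i-1)t^{\mathcal{A}}_i ,
\]
equivalently $3\leq\sum_{i\geq2}(3-i)t^{\mathcal{A}}_i=t^{\mathcal{A}}_2-\sum_{i\geq4}(i-3)t^{\mathcal{A}}_i$, that is, $3+\sum_{i\geq4}(i-3)t^{\mathcal{A}}_i\leq t^{\mathcal{A}}_2$, which is the asserted inequality.

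The main obstacle is the geometric step, and within it the one mildly delicate point --- that a single pseudoline cannot bound a chamber --- which genuinely uses the topology of $\mathbb{P}^2(\mathbb{R})$; alternatively one may simply invoke the standard cell-structure theory of pseudoline arrangements (see \cite{matroids}), where it is part of the basic picture that chambers are disks with at least three sides. Everything else is bookkeeping with the four identities already in hand.
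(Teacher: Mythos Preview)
Your proof is correct and follows essentially the same route as the paper: identities (1), (3), (4) by the obvious double counts, (2) from (3), (4) and Euler's formula for $\mathbb{P}^2(\mathbb{R})$, and (5) from $2f^\mathcal{A}_1\ge 3f^\mathcal{A}_2$. The only minor differences are cosmetic: for (4) the paper counts edges line by line (each pseudoline is a circle, so it carries as many edges as vertices), whereas you count vertex--edge incidences; and for (5) the paper simply asserts that every chamber is bounded by at least three lines, while you supply the topological justification in detail.
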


\begin{proof}
Equation (1) follows from counting pairs of lines in two different ways. Equality (3) holds by definition. In order to obtain (4), observe that every edge is contained in precisely one line. Moreover, the number of vertices on a given line coincides with the number of edges contained in said line. We obtain (4) by another double counting argument. Using the formula for the Euler characteristic, we see that (2) is a consequence of (3) and (4). Finally, inequality (5) follows from the observation that every chamber of $\mathcal{A}$ is bounded by at least three lines $\ell_1, \ell_2, \ell_3 \in \mathcal{A}$.
\end{proof}

\begin{remark} \label{near pencil remark}
Inequality (5) is also known as ``Melchior's inequality'' (see \cite{melch}).
An arrangement $\mathcal{A}$ is simplicial if and only if we have equality in (5). Indeed, $\mathcal{A}$ is simplicial if and only if $3 f^\mathcal{A}_2 = 2 f^\mathcal{A}_1$. The claim now follows using relations (2) and (4).
\end{remark}

\end{section}

\begin{section}{Simplicial arrangements}
The main goal of this section is to provide inequalities involving the $t$-vectors of simplicial pseudoline arrangements in $\mathbb{P}^2(\mathbb{R})$. If not explicitly stated otherwise, then throughout the entire section $\mathcal{A}$ will always denote a simplicial pseudoline arrangement consisting of $n$ lines in $\mathbb{P}^2(\mathbb{R})$. If not stated otherwise, it is assumed that $t^\mathcal{A}_i=0$ for $i \in \lbrace n-1,n \rbrace$, i.e. $\mathcal{A}$ is not a near pencil arrangement. Moreover, isomorphism classes of simplicial arrangements are denoted in the same way as in \cite{Gruenbaum}.

\begin{subsection}{Upper and lower bounds for $t^\mathcal{A}_2$ and $t^\mathcal{A}_3$}

We will be mainly interested in (lower and upper) bounds for the numbers $t^\mathcal{A}_2, t^\mathcal{A}_3$. To get started, we give the following lemma which gives quite strong restrictions on the distribution of double points produced by a simplicial pseudoline arrangement.

\begin{lemma} \label{near pencil lemma}
a) The closure of any given chamber $C \in \mathcal{K}(\mathcal{A})$ contains at most one double point of $\mathcal{A}$. \\
b) We have the tight estimate $4t^\mathcal{A}_2 \leq  f^\mathcal{A}_2$. Equality holds if and only if $\overline{C}$ contains a double point for every $C \in \mathcal{K}(\mathcal{A})$.
\end{lemma}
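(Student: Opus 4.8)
The plan is to prove both parts together by analyzing the local structure of a chamber $C$ whose closure $\overline C$ contains a double point $v$. Since $\mathcal A$ is simplicial, $C$ is a triangle bounded by exactly three lines $\ell_1,\ell_2,\ell_3$, whose pairwise intersections are the three vertices of $\overline C$. The double point $v$ is one of these three vertices, say $v=\ell_1\cap\ell_2$, and $w_{\mathcal A}(v)=2$ means no further line of $\mathcal A$ passes through $v$.

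**Part a).** First I would argue that $\overline C$ cannot contain two double points. Suppose $v_1=\ell_1\cap\ell_2$ and $v_2=\ell_1\cap\ell_3$ were both double points. Walk along $\ell_1$ through the edge of $\overline C$ joining $v_1$ and $v_2$; on the other side of $\ell_1$ from $C$ there is an adjacent chamber $C'$, which is again a triangle. Its third bounding line $\ell'$ must cross $\ell_1$ at a point strictly between $v_1$ and $v_2$ — otherwise $C'$ would fail to be a triangle, or $\ell'$ would pass through $v_1$ or $v_2$, contradicting that these are double points (they lie on only $\ell_1,\ell_2$ resp. $\ell_1,\ell_3$). But then the segment of $\ell_1$ between $v_1$ and $v_2$ is subdivided by $\ell'\cap\ell_1$, so it is not a single edge, contradicting that $v_1v_2$ is an edge of the triangle $\overline C$. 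Hence at most one vertex of $\overline C$ is a double point, proving a). (I would double-check the degenerate possibilities — e.g. $\ell'$ coinciding with $\ell_2$ or $\ell_3$ — but these are excluded since distinct pseudolines meet in exactly one point.)

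**Part b), the inequality.** Part a) gives an injection-flavoured bound but not immediately $4t^{\mathcal A}_2\le f^{\mathcal A}_2$; I need each double point to be "charged" to roughly four chambers. Here is the mechanism: a double point $v=\ell\cap\ell'$ has four chambers incident to it (the four "quadrants" around $v$ cut out by $\ell$ and $\ell'$), each of which is a triangle having $v$ as a vertex. So define a map from pairs $(v, C)$ with $v$ a double point and $C$ a chamber incident to $v$: there are exactly $4t^{\mathcal A}_2$ such pairs. By part a), each chamber $C$ occurs in at most one such pair, so the map $(v,C)\mapsto C$ is injective into $\mathcal K(\mathcal A)$, giving $4t^{\mathcal A}_2\le f^{\mathcal A}_2$. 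Equality holds exactly when this map is also surjective, i.e. when every chamber is incident to (exactly one) double point — which is the stated condition. I would note that tightness of the estimate is witnessed by near-pencils, or more interestingly by small simplicial arrangements where it is attained, so I might cite such an example; the key logical content is the injectivity from part a) combined with the "four quadrants" count.

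**Main obstacle.** The delicate point is the argument in part a): I must carefully rule out all configurations of the adjacent chamber $C'$ across the edge $v_1 v_2$ — in particular that $C'$'s third line could slip through an endpoint (killed by the double-point hypothesis) or be parallel-to-nothing in the projective sense (every two pseudolines meet, so there is always a crossing point, and I must locate it strictly inside the edge). Getting this case analysis airtight, using only that $\mathcal A$ is an arrangement of pseudolines (so "betweenness" along a pseudoline makes sense and distinct pseudolines meet exactly once) and that every chamber is a triangle, is where the real work lies; the counting in b) is then a short formal consequence.
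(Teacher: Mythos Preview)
Your counting argument for part b) is fine and matches the paper's. The problem is part a): the contradiction you claim does not occur.

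With your labelling ($v_1=\ell_1\cap\ell_2$, $v_2=\ell_1\cap\ell_3$ both double, and $C'$ the chamber across the edge $v_1v_2\subset\ell_1$), the triangle $C'$ has $v_1$ and $v_2$ as vertices. Since $v_1$ lies only on $\ell_1,\ell_2$ and $v_2$ only on $\ell_1,\ell_3$, the two remaining walls of $C'$ lie on $\ell_2$ and $\ell_3$. So the three bounding lines of $C'$ are exactly $\ell_1,\ell_2,\ell_3$ --- there is no new line $\ell'$, and nothing subdivides the edge $v_1v_2$. Your dismissal of ``$\ell'$ coinciding with $\ell_2$ or $\ell_3$'' as degenerate is precisely the case that occurs, and ``distinct pseudolines meet in exactly one point'' does not rule it out.

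What you should conclude instead is that the third vertex of $C'$ is $\ell_2\cap\ell_3$, the same point as the third vertex of $C$. Hence both arcs of the closed curve $\ell_2$ between $v_1$ and $\ell_2\cap\ell_3$ are single edges of the cell decomposition (one bounds $C$, the other bounds $C'$), so $\ell_2$ carries only these two vertices. Every line of $\mathcal{A}\setminus\{\ell_2\}$ meets $\ell_2$, and only $\ell_1$ meets it at the double point $v_1$; thus all $n-1$ remaining lines pass through $\ell_2\cap\ell_3$, forcing $\mathcal{A}$ to be a near pencil --- contradicting the standing hypothesis. The paper reaches the same conclusion by crossing a different wall and iterating: each successive adjacent chamber forces its new third line through the fixed vertex $\ell_1\cap\ell_2$, again yielding a near pencil.

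A minor point: near pencils do \emph{not} witness tightness in b); for them $f_2=2t_2<4t_2$, which is why they are excluded from the outset. The paper cites the arrangement $A(13,2)$ as a genuine tight example.
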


\begin{proof}
a) Assume that there exists $C \in \mathcal{K}(\mathcal{A})$ such that $\overline{C}$ contains two double points $v_1, v_2$. Let $\ell \in \mathcal{A}$ be the line containing both $v_1, v_2$ and let $\ell_1, \ell_2 \in \mathcal{A}$ be the lines meeting $\ell$ in $v_1, v_2$ respectively. 
Let $C^\prime \in \mathcal{K}(\mathcal{A})$ be the chamber adjacent to $C$ via $\ell_2$. Observe that both $\ell$ and $\ell_2$ are walls of $C^\prime$ because $w_{\mathcal{A}}(v_2)=2$. Denote by $\ell_3$ the line supporting the third wall of $C^\prime$ and set $v_3:=\ell_3 \cap \ell$. In particular, the points $v_2,v_3,v:=\ell_1 \cap \ell_2$ are vertices of $\mathcal{A}$ contained in $\overline{C^\prime}$. This implies that $\ell_3$ passes through $v$. Iterating this argument $n-3$ times in total shows that $\mathcal{A}$ is a near pencil arrangement, contradicting our initial assumptions about $\mathcal{A}$.   \\
b) By part a) and double counting, we have $4 t^\mathcal{A}_2 \leq f^\mathcal{A}_2$. This inequality is tight for the arrangement $A(13,2)$ shown in Figure \ref{a_13_2 bild}. The claim about equality also follows from part a).  
\end{proof}

\begin{remark} 
a) If $\mathcal{A}$ is a near pencil arrangement, then $t^\mathcal{A}_2=n-1$ and $f^\mathcal{A}_2=2n-2$. So in this case one has $f^\mathcal{A}_2<4 t^\mathcal{A}_2$. \\
b) Further examples of arrangements with $4 t^\mathcal{A}_2 =f^\mathcal{A}_2$ are for instance given by the (straight) line arrangements corresponding to finite reflection groups in $\mathbb{R}^3$. 
\end{remark}

From the last lemma we may deduce the following interesting analogue of Melchior's inequality which is valid only for non-trivial simplicial arrangements:

\begin{corollary} \label{simplicial melchior}
We have the following analogue of Melchior's inequality for the number $t^\mathcal{A}_3$: $$t^\mathcal{A}_3 \geq 4 + \sum_{i \geq 5} (i-4) t^\mathcal{A}_i.$$
 Equality holds if and only if $\overline{C}$ contains a double point for every $C \in \mathcal{K}(\mathcal{A})$.
\end{corollary}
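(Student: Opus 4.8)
The plan is to combine the three ingredients already available: the Euler-type identity (2) from Lemma \ref{basic comb t vector}, the tight estimate $4t^{\mathcal{A}}_2 \le f^{\mathcal{A}}_2$ from Lemma \ref{near pencil lemma} b), and the fact recorded in Remark \ref{near pencil remark} that a (non-near-pencil) simplicial arrangement satisfies Melchior's inequality (5) with equality.

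First I would isolate the low-index terms in (2), writing
$$f^{\mathcal{A}}_2 = 1 + t^{\mathcal{A}}_2 + 2\,t^{\mathcal{A}}_3 + \sum_{i \ge 4}(i-1)\,t^{\mathcal{A}}_i,$$
and feed this into $4t^{\mathcal{A}}_2 \le f^{\mathcal{A}}_2$ to obtain
$$3\,t^{\mathcal{A}}_2 \le 1 + 2\,t^{\mathcal{A}}_3 + \sum_{i \ge 4}(i-1)\,t^{\mathcal{A}}_i.$$
Since $\mathcal{A}$ is simplicial and not a near pencil, Remark \ref{near pencil remark} upgrades (5) to the identity $t^{\mathcal{A}}_2 = 3 + \sum_{i \ge 4}(i-3)\,t^{\mathcal{A}}_i$; multiplying by $3$ and substituting gives
$$9 + \sum_{i \ge 4}(3i-9)\,t^{\mathcal{A}}_i \le 1 + 2\,t^{\mathcal{A}}_3 + \sum_{i \ge 4}(i-1)\,t^{\mathcal{A}}_i.$$
Collecting terms yields $8 + \sum_{i \ge 4}(2i-8)\,t^{\mathcal{A}}_i \le 2\,t^{\mathcal{A}}_3$; dividing by $2$ and noticing that the $i=4$ summand vanishes (so $\sum_{i\ge 4}(i-4)\,t^{\mathcal{A}}_i = \sum_{i\ge 5}(i-4)\,t^{\mathcal{A}}_i$) produces exactly $t^{\mathcal{A}}_3 \ge 4 + \sum_{i\ge 5}(i-4)\,t^{\mathcal{A}}_i$.

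For the equality statement I would observe that every manipulation above is an identity except for the single use of $4t^{\mathcal{A}}_2 \le f^{\mathcal{A}}_2$; hence equality in the conclusion holds if and only if equality holds there, which by Lemma \ref{near pencil lemma} b) is equivalent to $\overline{C}$ containing a double point for every $C \in \mathcal{K}(\mathcal{A})$. There is no genuinely hard step here: the only point requiring care is the index bookkeeping, in particular verifying that the coefficient of $t^{\mathcal{A}}_4$ drops out, which is precisely what makes the sum on the right start at $i=5$.
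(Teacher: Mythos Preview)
Your proof is correct and follows essentially the same approach as the paper: both combine the identity (2), the estimate $4t^{\mathcal{A}}_2 \le f^{\mathcal{A}}_2$ from Lemma \ref{near pencil lemma} b), and the equality case of Melchior's inequality from Remark \ref{near pencil remark}, differing only in how the algebra is arranged. Your treatment of the equality case is likewise the same as the paper's.
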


\begin{proof}
By Lemma \ref{near pencil lemma}, part b) we have $f^\mathcal{A}_2 \geq 4 t^\mathcal{A}_2$. By Remark \ref{near pencil remark} we have equality in (5), therefore $$ -2 + 2 \sum_{i \geq 3} t^\mathcal{A}_i=f^\mathcal{A}_2 - 2 t^\mathcal{A}_2 \geq 2 t^\mathcal{A}_2=2\left(3 + \sum_{i \geq 4} (i-3)t^\mathcal{A}_i \right).$$ After rearranging terms we obtain the desired inequality. 

Now assume that $t^\mathcal{A}_3=4 + \sum_{i \geq 5} (i-4) t^\mathcal{A}_i$. This implies $t^\mathcal{A}_2=-1 + \sum_{i \geq 3} t^\mathcal{A}_i$, which yields $4 t^\mathcal{A}_2=f^\mathcal{A}_2$.  
\end{proof}

\begin{figure}[H]
\begin{center}
\setlength{\unitlength}{0.7pt}
\begin{picture}(400,300)(90,0) 
\Line(220,0)(220,300)
\Line(340,0)(340,300)

\Line(120,0)(420,300)
\put(270,115){
$C_1$}
\put(310,155){
$C_2$}
\put(270,190){
$C_3$}

\put(225,155){
$C_4$}

\put(270,75){
$C_5$}

\put(350,155){
$C_6$}

\put(270,235){
$C_7$}

\put(185,155){
$C_8$}

\Line(140,300)(440,0)

\Line(100,100)(460,100)
\Line(100,220)(460,220)

\thicklines
\strokepath
\end{picture}\\
The situation of Lemma \ref{jeder zweier hoechstens drei dreier}.   \caption{\label{Lemma bild}} 
\end{center}
\end{figure}

\begin{lemma} \label{jeder zweier hoechstens drei dreier}
 Assume that $\mathcal{A}$ has some vertex $v$ of weight two such that every neighbour of $v$ has weight three. Then $n\leq7$, i.e. $\mathcal{A}$ is one of the arrangements $A(6,1), A(7,1)$.
\end{lemma}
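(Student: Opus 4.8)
\smallskip
\noindent\textit{Proof proposal.} The plan is a local analysis around $v$, in the spirit of the proof of Lemma~\ref{near pencil lemma}(a), showing that the combinatorial type of $\mathcal A$ is forced to close up after at most seven lines. Fix notation as in Figure~\ref{Lemma bild}: let $\ell_1,\ell_2\in\mathcal A$ be the two lines through $v$, and let $p_1,p_2\in\ell_1$ and $q_1,q_2\in\ell_2$ be the four neighbours of $v$, labelled so that the four chambers of $\mathcal A$ having $v$ as a vertex are, in cyclic order, the triangles $\overline{vp_1q_1}$, $\overline{vq_1p_2}$, $\overline{vp_2q_2}$, $\overline{vq_2p_1}$ (these chambers are triangles because $\mathcal A$ is simplicial, and their remaining vertices are exactly the neighbours of $v$). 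Let $m_1,m_2,m_3,m_4\in\mathcal A$ support the sides $p_1q_1$, $q_1p_2$, $p_2q_2$, $q_2p_1$. Using that $p_1,p_2$ lie on $\ell_1$ while $q_1,q_2$ do not (and the analogous statement for $\ell_2$), a short check shows that $\ell_1,\ell_2,m_1,m_2,m_3,m_4$ are six pairwise distinct lines, so $n\ge 6$; moreover $p_1\in\ell_1\cap m_1\cap m_4$, $p_2\in\ell_1\cap m_2\cap m_3$, $q_1\in\ell_2\cap m_1\cap m_2$, $q_2\in\ell_2\cap m_3\cap m_4$, and since by hypothesis these four vertices have weight \emph{exactly} three, no further line of $\mathcal A$ passes through any of them.

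Next comes the propagation step. Each of $p_1,p_2,q_1,q_2$ is a triple point, hence has exactly six chambers around it, all triangles, two of which are among the four triangles at $v$. Analysing the remaining four triangles at each of these vertices — and invoking Lemma~\ref{near pencil lemma}(a) to forbid a second double point in any of these chambers — one finds that their outer sides are forced onto the lines already named, and that the only vertices created outside $\{v,p_1,p_2,q_1,q_2\}$ are $x_{13}:=m_1\cap m_3$ and $x_{24}:=m_2\cap m_4$. Iterating the same reasoning at $x_{13}$ and at $x_{24}$, the arrangement either closes up with the six lines $\ell_1,\ell_2,m_1,\dots,m_4$, giving $A(6,1)$, or it demands exactly one more line, namely the line joining $x_{13}$ and $x_{24}$, and then closes up with those seven lines, giving $A(7,1)$. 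In either case $n\le 7$.

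I expect the bookkeeping in the propagation step to be the main obstacle: one must rule out that some chain of ``a new triangle forces a new line'' ever produces a \emph{second} extra line. The most economical way to keep this under control seems to be to track the line $\ell_1$: along the arrangement its vertices are $\dots,p_1,v,p_2,\dots$, and since $p_1,p_2$ have weight exactly three and $v$ weight exactly two, the only lines meeting $\ell_1$ are $\ell_2,m_1,m_2,m_3,m_4$ together with at most the single extra line through $x_{13},x_{24}$; any additional line would have to cross $\ell_1$ at a further point, and chasing the triangles there contradicts either simpliciality or the weight constraint at one of $p_1,p_2,q_1,q_2$. As a final consistency check one can use relation~(1), $\sum_{i\ge2}\binom i2 t^{\mathcal A}_i=\binom n2$, together with Melchior's equality for simplicial arrangements (Remark~\ref{near pencil remark}), which for $n\le 7$ leaves only the $t$-vectors realised by $A(6,1)$ and $A(7,1)$.
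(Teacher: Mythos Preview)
Your setup is correct and matches the paper's Figure~\ref{Lemma bild}: the six lines $\ell_1,\ell_2,m_1,m_2,m_3,m_4$ are forced, with exactly the incidences you list, and the only intersection points among them outside $\{v,p_1,p_2,q_1,q_2\}$ are indeed $x_{13}=m_1\cap m_3$ and $x_{24}=m_2\cap m_4$. The gap is precisely where you locate it yourself: the ``propagation step'' is asserted but not carried out, and the subsequent ``iterating the same reasoning at $x_{13}$ and $x_{24}$'' is not an argument. Your appeal to Lemma~\ref{near pencil lemma}(a) is also misplaced --- the weight hypotheses at $v$ and its four neighbours already pin everything down without any double-point considerations.

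The paper's proof avoids the propagation bookkeeping entirely. Instead of analysing the six chambers at each triple point, it observes that exactly \emph{eight} chambers of the full arrangement $\mathcal A$ are determined: the four triangles $C_1,\dots,C_4$ at $v$, and the four triangles $C_5,\dots,C_8$ adjacent to them across the edges opposite $v$. For each such $C_{4+k}$, the shared edge has both endpoints among $p_1,p_2,q_1,q_2$; since these have weight exactly three, the two remaining walls of $C_{4+k}$ are forced to be the third lines through those endpoints, hence again among $m_1,\dots,m_4$. Thus no further line of $\mathcal A$ may enter any of the open cells $C_1,\dots,C_8$.

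The finish is then a one-line observation that replaces your attempt to ``track $\ell_1$''. The right lines to track are the $m_i$: each $m_i$ carries only three vertices in the six-line arrangement (two of the $p_\bullet,q_\bullet$ and one of $x_{13},x_{24}$), and each of the three resulting segments is an edge of some $C_j$ with $1\le j\le 8$. Hence a further line can meet $m_i$ only at one of these three vertices; since the $p_\bullet,q_\bullet$ already have weight three, the only option is $x_{13}$ (for $m_1,m_3$) and $x_{24}$ (for $m_2,m_4$). Two prescribed points determine at most one pseudoline, so $n\le 7$. Note that $\ell_1,\ell_2$ do have free arcs (the ones not through $v$), which is exactly why a seventh line through $x_{13},x_{24}$ is possible; so tracking $\ell_1$ would not have closed the argument.

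Your final ``consistency check'' via relation~(1) and Melchior's equality is harmless but adds nothing: it can only be invoked after $n\le 7$ is already established.
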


\begin{proof}
The situation of the lemma is shown in Figure \ref{Lemma bild}. The assumptions imply that any further line of $\mathcal{A}$ must not intersect the chambers $C_1, C_2, ..., C_8$. Thus, only one more line may be added to $\mathcal{A}$. This proves the claim.  
\end{proof}

\begin{remark}
If $n \geq 8$ then there exists a chamber containing at most one vertex of weight three. This follows immediately from Lemma \ref{jeder zweier hoechstens drei dreier}.
\end{remark}

In order to obtain an upper bound for the number of double points in terms of $n$, it suffices by part b) of Lemma \ref{near pencil lemma} to give an upper bound for the number of chambers. This is done in the following Proposition.

\begin{proposition} \label{2er bound}
We have the tight estimate $t^\mathcal{A}_2 \leq \frac{\binom{n}{2}+6}{7}$.  
\end{proposition}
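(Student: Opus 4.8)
The plan is to combine three facts established earlier in this section: equation~(1) of Lemma~\ref{basic comb t vector}, the simplicial refinement of Melchior's inequality (namely the equality $t^\mathcal{A}_2 = 3 + \sum_{i \geq 4}(i-3)t^\mathcal{A}_i$ recorded in Remark~\ref{near pencil remark}), and the inequality $t^\mathcal{A}_3 \geq 4 + \sum_{i \geq 5}(i-4)t^\mathcal{A}_i$ of Corollary~\ref{simplicial melchior}. Since $\mathcal{A}$ is assumed throughout this section to be a non-trivial simplicial arrangement, all three are at our disposal.

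First I would clear denominators and rewrite the target inequality as $7 t^\mathcal{A}_2 \leq \binom{n}{2} + 6$, then split $7 t^\mathcal{A}_2 = 6 t^\mathcal{A}_2 + t^\mathcal{A}_2$ and substitute the simplicial identity for the term $6 t^\mathcal{A}_2$ only. Expanding $\binom{n}{2} = t^\mathcal{A}_2 + 3 t^\mathcal{A}_3 + \sum_{i \geq 4}\binom{i}{2}t^\mathcal{A}_i$ by equation~(1), the term $t^\mathcal{A}_2$ cancels and the claim becomes equivalent to
\[
12 + \sum_{i \geq 4}\Bigl(6(i-3) - \binom{i}{2}\Bigr) t^\mathcal{A}_i \;\leq\; 3 t^\mathcal{A}_3 .
\]

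To finish, I would bound the right-hand side from below using Corollary~\ref{simplicial melchior}, written as $3 t^\mathcal{A}_3 \geq 12 + \sum_{i \geq 4} 3(i-4) t^\mathcal{A}_i$ (the $i=4$ summand vanishes, so it is harmless that the corollary's sum begins at $i=5$). It then suffices to compare coefficients: for every $i \geq 4$,
\[
6(i-3) - \binom{i}{2} - 3(i-4) = -\tfrac{1}{2}(i-3)(i-4) \leq 0 ,
\]
so the required inequality holds term by term, all $t^\mathcal{A}_i$ being non-negative. For the tightness assertion I would point to the arrangement $A(13,2)$ already used in Lemma~\ref{near pencil lemma}: running the chain of (reversible) equivalences backwards shows that equality forces $t^\mathcal{A}_i = 0$ for $i \geq 5$ together with equality in Corollary~\ref{simplicial melchior}, and $A(13,2)$ satisfies both conditions.

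I do not anticipate a genuine obstacle; the argument is essentially bookkeeping with the identities of this section. The only point demanding attention is keeping the ranges of summation aligned (Corollary~\ref{simplicial melchior} starts its sum at $i=5$, while equation~(1) and the simplicial identity range over $i \geq 4$) and verifying that the reduction to the coefficient-wise inequality is an honest equivalence rather than a one-way implication — this is precisely what makes the discussion of the extremal case clean.
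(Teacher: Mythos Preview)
Your proof is correct and uses essentially the same circle of ideas as the paper: the paper combines $4t^\mathcal{A}_2 \leq f^\mathcal{A}_2$ from Lemma~\ref{near pencil lemma}(b) with a formula for $f^\mathcal{A}_2$ coming from equation~(1), the simplicial identity, and the bound $t^\mathcal{A}_3 \geq 4$, whereas you route the same information through Corollary~\ref{simplicial melchior} (which is itself the repackaging of Lemma~\ref{near pencil lemma}(b)). Both arguments point to $A(13,2)$ for tightness, and your analysis of the equality case matches Remark~\ref{sharpness bei 4}.
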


\begin{proof}
By part b) of Lemma \ref{near pencil lemma} we have $4 t^\mathcal{A}_2 \leq f^\mathcal{A}_2$. Further, using the relations given in Lemma \ref{basic comb t vector} together with Remark \ref{near pencil remark}, one can check that \begin{align*}
f^\mathcal{A}_2=\binom{n}{2} +1 - \sum_{i \geq 3} \binom{i-1}{2}t^\mathcal{A}_i \leq \binom{n}{2} +1 -t^\mathcal{A}_3 - 3 t^\mathcal{A}_2+9.
\end{align*}
Using $t^\mathcal{A}_3 \geq 4$, we may conclude that $4 t^\mathcal{A}_2 \leq f^\mathcal{A}_2 \leq \frac{n^2-n+12}{2} - 3 t^\mathcal{A}_2$, proving the first claim. The arrangement $A(13,2)$ depicted in Figure \ref{a_13_2 bild} is an example for which the given bound is tight.
\end{proof}

\begin{remark} \label{sharpness bei 4} 
If $t^\mathcal{A}_2=\frac{\binom{n}{2}+6}{7}$ then we have equality in the chain of inequalities $$4 t^\mathcal{A}_2 \leq f^\mathcal{A}_2 \leq \frac{n^2-n}{3} - \frac{2}{3} t^\mathcal{A}_2 +4.$$ This in turn implies that $\mathcal{A}$ has multiplicity at most four. Moreover, we observe that one has $$t^\mathcal{A}_3=4 \Leftrightarrow t^\mathcal{A}_4=\frac{\binom{n}{2}-15}{7} \Leftrightarrow t^\mathcal{A}_2=\frac{\binom{n}{2}+6}{7}.$$ 
\end{remark}

\begin{figure} 
\begin{center}
\setlength{\unitlength}{0.7pt}
\begin{picture}(400,300)(90,0) 
\Line(220,0)(220,300)
\Line(280,0)(280,300)
\Line(340,0)(340,300)

\Line(120,0)(420,300)
\Line(180,0)(460,280)
\Line(100,40)(360,300)

\Line(200,300)(460,40)
\Line(140,300)(440,0)
\Line(100,280)(380,0)

\Line(100,100)(460,100)
\Line(100,160)(460,160)
\Line(100,220)(460,220)

\put(480,280){\begin{Large}
$\infty$\end{Large}}

\thicklines
\strokepath
\end{picture}\\
The arrangement of type $A(13,2)$. The line at infinity is contained in the arrangement, indicated by the symbol ``$\infty$''.   \caption{\label{a_13_2 bild}} 
\end{center}
\end{figure}

In the following, we want to establish an upper bound for $\min(t^\mathcal{A}_2, t^\mathcal{A}_3)$ and a lower bound for $\max(t^\mathcal{A}_2, t^\mathcal{A}_3)$. In order to do this we use a little lemma which may be interesting in its own right. 

\begin{lemma} \label{t2+t3} The following statements are true: \\
a) $2 (t^\mathcal{A}_2 + 2) \leq 4 + \frac{f^\mathcal{A}_2}{2}  + \sum_{i\geq 5} (i-4) t^\mathcal{A}_i  = 2 t^\mathcal{A}_2 + t^\mathcal{A}_3 \leq 2 t^\mathcal{A}_2 + t^\mathcal{A}_3 + \frac{t^\mathcal{A}_4}{3}\leq \frac{\binom{n}{2}}{3} + 5$.  \\
b) If $t^\mathcal{A}_i=0$ for $i>6$ then we have $2 t^\mathcal{A}_2 + t^\mathcal{A}_3 + \frac{t^\mathcal{A}_4}{3} = \frac{\binom{n}{2}}{3} + 5$. In particular, we cannot have $t^\mathcal{A}_4 \equiv 2 \ (\text{mod } 3 )$ for such an arrangement.
\end{lemma}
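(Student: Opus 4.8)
The plan is to verify the chain in part~a) link by link, each step being immediate from Lemma~\ref{basic comb t vector}, the simplicial form of Melchior's relation (Remark~\ref{near pencil remark}), or Lemma~\ref{near pencil lemma}~b), and then to extract part~b) from the equality case of the rightmost inequality. First I would record the two identities that do all the work: simpliciality means equality in (5), i.e.\ $t^\mathcal{A}_2 = 3 + \sum_{i\ge 4}(i-3)t^\mathcal{A}_i$, and combining (2), (3), (4) with this (the computation already carried out in the proof of Corollary~\ref{simplicial melchior}) gives $f^\mathcal{A}_2 = 2t^\mathcal{A}_2 - 2 + 2\sum_{i\ge 3}t^\mathcal{A}_i$, so that $\tfrac12 f^\mathcal{A}_2 = t^\mathcal{A}_2 - 1 + \sum_{i\ge 3}t^\mathcal{A}_i$.

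The leftmost inequality of a) is just $4t^\mathcal{A}_2 \le f^\mathcal{A}_2$ (Lemma~\ref{near pencil lemma}~b)) together with $\sum_{i\ge 5}(i-4)t^\mathcal{A}_i \ge 0$. For the central equality I would substitute the above expression for $\tfrac12 f^\mathcal{A}_2$ into $4 + \tfrac12 f^\mathcal{A}_2 + \sum_{i\ge 5}(i-4)t^\mathcal{A}_i$; after gathering $\sum_{i\ge 3}t^\mathcal{A}_i + \sum_{i\ge 5}(i-4)t^\mathcal{A}_i = t^\mathcal{A}_3 + \sum_{i\ge 4}(i-3)t^\mathcal{A}_i$ and using the Melchior equality to rewrite $3 + \sum_{i\ge 4}(i-3)t^\mathcal{A}_i$ as $t^\mathcal{A}_2$, the expression collapses to $2t^\mathcal{A}_2 + t^\mathcal{A}_3$. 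The third link $2t^\mathcal{A}_2 + t^\mathcal{A}_3 \le 2t^\mathcal{A}_2 + t^\mathcal{A}_3 + \tfrac13 t^\mathcal{A}_4$ is trivial.

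Only the last inequality has content. Clearing denominators it reads $6t^\mathcal{A}_2 + 3t^\mathcal{A}_3 + t^\mathcal{A}_4 \le \binom{n}{2} + 15$. I would write $\binom{n}{2} = \sum_{i\ge 2}\binom{i}{2}t^\mathcal{A}_i$ using (1), eliminate $t^\mathcal{A}_2$ once more via the Melchior equality, and observe that the low-order terms cancel, leaving the exact identity $\binom{n}{2} + 15 - 6t^\mathcal{A}_2 - 3t^\mathcal{A}_3 - t^\mathcal{A}_4 = \sum_{i\ge 5}\bigl(\binom{i}{2} - 5(i-3)\bigr)t^\mathcal{A}_i = \sum_{i\ge 5}\tfrac{(i-5)(i-6)}{2}\,t^\mathcal{A}_i$. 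Since $(i-5)(i-6)\ge 0$ for every integer $i$, the right-hand side is nonnegative, which finishes a); moreover it equals $0$ precisely when $t^\mathcal{A}_i = 0$ for all $i \ge 7$. This is exactly the hypothesis of b), so in that case $6t^\mathcal{A}_2 + 3t^\mathcal{A}_3 + t^\mathcal{A}_4 = \binom{n}{2} + 15$, equivalently $2t^\mathcal{A}_2 + t^\mathcal{A}_3 + \tfrac13 t^\mathcal{A}_4 = \tfrac13\binom{n}{2} + 5$. Reducing the first form modulo $3$ gives $t^\mathcal{A}_4 \equiv \binom{n}{2}\pmod 3$, and a one-line case check on $n \bmod 3$ shows $\binom{n}{2}\bmod 3\in\{0,1\}$, never $2$; hence $t^\mathcal{A}_4 \not\equiv 2 \pmod 3$.

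There is no genuine obstacle here, only bookkeeping: the points to get right are the index shifts in the partial sums and the factorization $\binom{i}{2} - 5(i-3) = \tfrac{(i-5)(i-6)}{2}$, which simultaneously yields the last inequality of a) and identifies its equality case, and hence b).
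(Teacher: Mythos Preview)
Your proof is correct and follows essentially the same route as the paper: both arguments combine the simplicial Melchior equality, equation~(1), and Lemma~\ref{near pencil lemma}~b), and both hinge on the inequality $\binom{i}{2}\ge 5(i-3)$ for $i\ge 5$. Your presentation is in fact slightly tidier: by writing the exact remainder $\binom{n}{2}+15-6t^\mathcal{A}_2-3t^\mathcal{A}_3-t^\mathcal{A}_4=\sum_{i\ge 5}\tfrac{(i-5)(i-6)}{2}t^\mathcal{A}_i$ you simultaneously obtain the last inequality of a) and its equality case, so part~b) falls out immediately, whereas the paper re-derives the identity $6t^\mathcal{A}_2+3t^\mathcal{A}_3+t^\mathcal{A}_4=\binom{n}{2}+15$ from scratch under the hypothesis $t^\mathcal{A}_i=0$ for $i>6$; for the congruence the paper phrases your case check as the irreducibility of $X^2-X+2$ over $\mathbb{F}_3$, which is the same thing.
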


\begin{proof}
a) Equation (1) from Lemma \ref{basic comb t vector} gives $3 t^\mathcal{A}_3= \binom{n}{2} -t^\mathcal{A}_2 - \sum_{i \geq 4} \binom{i}{2} t^\mathcal{A}_i$. Moreover, for $i \geq 5$ we always have $\binom{i}{2} \geq 5 (i-3)$ and so we conclude that $
6 t^\mathcal{A}_4 + \sum_{i \geq 5} \binom{i}{2} t^\mathcal{A}_i \geq  t^\mathcal{A}_4 + 5 \sum_{i \geq 4} (i-3) t^\mathcal{A}_i=5 t^\mathcal{A}_2 +t^\mathcal{A}_4 -15.
$ From this, it follows that $3 t^\mathcal{A}_3 \leq \binom{n}{2} -6 t^\mathcal{A}_2 - t^\mathcal{A}_4+15$, 
proving the upper bound for $2 t^\mathcal{A}_2 + t^\mathcal{A}_3+ \frac{t^\mathcal{A}_4}{3}$.

 Next we show that $2 t^\mathcal{A}_2 + t^\mathcal{A}_3 = 4 + \frac{f^\mathcal{A}_2}{2} + \sum_{i\geq 5} (i-4) t^\mathcal{A}_i$: observe that
$t^\mathcal{A}_2 =3 + \sum_{i \geq 4} (i-3) t^\mathcal{A}_i =  4 + \sum_{i \geq 4} t^\mathcal{A}_i - 1 + \sum_{i\geq 5} (i-4) t^\mathcal{A}_i$.
 Adding $t^\mathcal{A}_2 + t^\mathcal{A}_3$ on both sides of the last equation gives the desired equality.

Finally, the inequality $2 (t^\mathcal{A}_2 + 2) \leq 4 + \frac{f^\mathcal{A}_2}{2} + \sum_{i\geq 5} (i-4) t^\mathcal{A}_i$ follows from part b) of Lemma \ref{near pencil lemma}. \\
b) If $t^\mathcal{A}_i=0$ for $i>6$ then $ t^\mathcal{A}_2+3 t^\mathcal{A}_3 + 6 t^\mathcal{A}_4+ 10 t^\mathcal{A}_5 + 15 t^\mathcal{A}_6=\binom{n}{2}$, using equation (1) from Lemma \ref{basic comb t vector}. By simpliciality of $\mathcal{A}$ we have $t^\mathcal{A}_2=3+t^\mathcal{A}_4 + 2 t^\mathcal{A}_5 + 3 t^\mathcal{A}_6$ (see Remark \ref{near pencil remark}). We conclude that $6 t^\mathcal{A}_2+ 3 t^\mathcal{A}_3 + t^\mathcal{A}_4 - 15= \binom{n}{2}$. It follows $n^2-n-2t^\mathcal{A}_4 \equiv 0 \ (\text{mod } 3 )$. As the polynomial $X^2-X+2$ is irreducible over the finite field $\mathbb{F}_3$, it follows that $t^\mathcal{A}_4 \not \equiv 2 \ ( \text{mod } 3 )$. This completes the proof.
\end{proof}

\begin{corollary} \label{min max bound}
We have $\min(t^\mathcal{A}_2,t^\mathcal{A}_3) \leq \frac{n^2-n+30}{18}$ and $\max(t^\mathcal{A}_2,t^\mathcal{A}_3) > \frac{f^\mathcal{A}_2}{6}$.
\end{corollary}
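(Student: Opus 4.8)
The plan is to derive both inequalities purely formally from Lemma \ref{t2+t3}. The key observation is that the auxiliary quantity $2t^\mathcal{A}_2+t^\mathcal{A}_3$ is squeezed between an expression involving $f^\mathcal{A}_2$ from below (the identity $2t^\mathcal{A}_2+t^\mathcal{A}_3=4+\tfrac{f^\mathcal{A}_2}{2}+\sum_{i\geq5}(i-4)t^\mathcal{A}_i$) and the quantity $\tfrac{1}{3}\binom{n}{2}+5$ from above, while at the same time $2t^\mathcal{A}_2+t^\mathcal{A}_3$ is comparable to $3\min(t^\mathcal{A}_2,t^\mathcal{A}_3)$ and $3\max(t^\mathcal{A}_2,t^\mathcal{A}_3)$. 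So the whole proof is a short manipulation of these relations.

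For the bound on the minimum, I would argue as follows. Write $m:=\min(t^\mathcal{A}_2,t^\mathcal{A}_3)$. In either of the cases $t^\mathcal{A}_2\leq t^\mathcal{A}_3$ or $t^\mathcal{A}_3<t^\mathcal{A}_2$ one checks directly that $3m\leq 2t^\mathcal{A}_2+t^\mathcal{A}_3$, since replacing the larger of the two terms by the smaller one only decreases the right-hand side. Combining this with the upper bound $2t^\mathcal{A}_2+t^\mathcal{A}_3\leq\tfrac{1}{3}\binom{n}{2}+5$ from Lemma \ref{t2+t3}a) yields $3m\leq\tfrac{1}{3}\binom{n}{2}+5$, i.e. $m\leq\tfrac19\binom{n}{2}+\tfrac53=\tfrac{n^2-n+30}{18}$, which is the first assertion.

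For the bound on the maximum, dually set $M:=\max(t^\mathcal{A}_2,t^\mathcal{A}_3)$; the same case distinction gives $3M\geq 2t^\mathcal{A}_2+t^\mathcal{A}_3$. Now I plug in the identity $2t^\mathcal{A}_2+t^\mathcal{A}_3=4+\tfrac{f^\mathcal{A}_2}{2}+\sum_{i\geq5}(i-4)t^\mathcal{A}_i$ from Lemma \ref{t2+t3}a): since the tail sum is nonnegative, $2t^\mathcal{A}_2+t^\mathcal{A}_3\geq 4+\tfrac{f^\mathcal{A}_2}{2}>\tfrac{f^\mathcal{A}_2}{2}$, and therefore $3M>\tfrac{f^\mathcal{A}_2}{2}$, that is, $M>\tfrac{f^\mathcal{A}_2}{6}$.

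There is essentially no obstacle here, as everything reduces to Lemma \ref{t2+t3}. The only points requiring a little care are getting the directions right when passing between $2t^\mathcal{A}_2+t^\mathcal{A}_3$ and $3\min$, resp. $3\max$ (and, in the case analysis, noting that the inequality $3m\le 2t^\mathcal{A}_2+t^\mathcal{A}_3$ and its counterpart for $M$ hold regardless of which of $t^\mathcal{A}_2,t^\mathcal{A}_3$ is larger), and keeping the inequality in the second part strict, which is guaranteed by the additive constant $4$ rather than merely by nonnegativity of the tail sum.
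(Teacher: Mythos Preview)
Your proof is correct and follows essentially the same route as the paper: both arguments squeeze $2t^\mathcal{A}_2+t^\mathcal{A}_3$ between $\tfrac{f^\mathcal{A}_2}{2}$ (strictly, via the constant $4$) and $\tfrac{1}{3}\binom{n}{2}+5$ from Lemma~\ref{t2+t3}, and then compare with $3\min(t^\mathcal{A}_2,t^\mathcal{A}_3)$ and $3\max(t^\mathcal{A}_2,t^\mathcal{A}_3)$. Your phrasing of the case distinction is in fact slightly cleaner than the paper's, which splits on $\max_{i\geq 2}t^\mathcal{A}_i$ rather than directly on which of $t^\mathcal{A}_2,t^\mathcal{A}_3$ is larger.
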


\begin{proof}
Assume that $\max_{i \geq 2} t^\mathcal{A}_i= t^\mathcal{A}_3$. Then by Lemma \ref{t2+t3} we have \begin{align*}
3 t^\mathcal{A}_2 &\leq 2 t^\mathcal{A}_2 + t^\mathcal{A}_3 \leq \frac{\binom{n}{2}}{3} + 5, \\
\frac{f^\mathcal{A}_2}{2} &<  2 t^\mathcal{A}_2 + t^\mathcal{A}_3 \leq 3 t^\mathcal{A}_3.
\end{align*} This proves the claim in case $\max_{i \geq 2} t^\mathcal{A}_i= t^\mathcal{A}_3$. The case $\max_{i \geq 2} t^\mathcal{A}_i= t^\mathcal{A}_2$ is dealt with similarly.
\end{proof}

We close this subsection with a theorem which asserts that for stretchable(!) arrangements, at least one of $t^\mathcal{A}_2, t^\mathcal{A}_3$ is quadratic in $|\mathcal{A}|$. For this we need one more result:

\begin{proposition} \label{inf series prop}
Let $m:=\left \lfloor{\frac{|\mathcal{A}|}{2}}\right \rfloor$. Then we have $t^\mathcal{A}_i=0$ for all $i> m$. Moreover, we always have $t^\mathcal{A}_m \leq 1$.
\end{proposition}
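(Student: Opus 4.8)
The plan is to reduce everything to a statement about a single high‑weight vertex. It suffices to prove: for every vertex $v$ of $\mathcal{A}$ of weight $w:=w_{\mathcal{A}}(v)$ one has $w\le n-w$, and if $w=n-w$ then $v$ is the unique vertex of weight $w$. Indeed $w\le n-w$ is equivalent to $w\le\lfloor n/2\rfloor$ (the parity of $2w$ disposes of odd $n$), which gives the first claim; and $t^{\mathcal{A}}_m\ge 2$ with $m=\lfloor n/2\rfloor$ would in particular produce, when $n=2m$, two vertices of weight $m=n-m$, which the second part forbids, the odd case $t^{\mathcal{A}}_m\le 1$ being handled by a small separate variant of the same argument.

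So I fix $v$ of weight $w$ and look at the cell structure of $\mathcal{A}$ near $v$. The $w$ lines through $v$ cut a punctured neighbourhood of $v$ into $2w$ open ``sectors'', and since $\mathcal{A}$ is simplicial each sector is the corner at $v$ of a triangular chamber $T$: two sides of $T$ meet at $v$, and the third side — its \emph{base} — lies on a line of $\mathcal{A}$ not through $v$. Thus the $2w$ triangular chambers at $v$ distribute their bases among the $n-w$ lines avoiding $v$. The heart of the matter is the \textbf{Key Lemma}: \emph{if $\mathcal{A}$ is not a near pencil, then no line $\ell\in\mathcal{A}$ with $v\notin\ell$ is the base of three or more of the $2w$ triangular chambers at $v$.} Granting this, the first claim is immediate by pigeonhole: if $w>n-w$ then $2w>2(n-w)$, so some line avoiding $v$ would be the base of at least three of those chambers, a contradiction.

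To prove the Key Lemma I would work in the affine chart $D:=\mathbb{P}^2(\mathbb{R})\setminus\ell$, an open disc containing $v$ in its interior; there the $w$ lines through $v$ become ordinary lines through $v$ dividing $D$ into $2w$ angular sectors, and a triangular chamber at $v$ has base $\ell$ exactly when its sector is an entire sector of $D$, equivalently when none of the remaining $n-w-1$ lines avoiding $v$ meets that sector. If three sectors of $D$ at $v$ had this property they would form a run of consecutive sectors of total angle $<\pi$, all $n-w-1$ other lines avoiding $v$ would be forced to miss this run entirely, and then simpliciality of the resulting sub‑picture together with finiteness of $\mathcal{A}$ should force $n-w-1=0$, i.e.\ $\mathcal{A}$ a near pencil. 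The step I expect to be the main obstacle is precisely this last one — turning the fact that three consecutive sectors at $v$ are chambers into a genuine contradiction rather than mere rigidity; this is where the non‑near‑pencil hypothesis must really be used, and it seems to require a careful analysis of how the lines avoiding $v$ get squeezed into the complementary region and of what simpliciality then imposes there.

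For the equality statement $t^{\mathcal{A}}_m\le 1$ I would re‑run the pigeonhole in the extremal case $w=n-w$: then every line avoiding $v$ is the base of \emph{exactly} two of the $2w$ chambers at $v$, an extremely constrained configuration. Supposing a second vertex $v'$ of the same weight, the line $vv'$ lies in both pencils and exactly one line of $\mathcal{A}$ avoids both $v$ and $v'$; combining the two rigid ``fans'' at $v$ and at $v'$ through this bookkeeping should be incompatible, yielding the contradiction. (The odd‑$n$ case is similar, with a slightly weaker but still sufficient lower bound on how many lines avoiding $v$ must be bases of exactly two chambers at $v$.)
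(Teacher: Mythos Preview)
Your plan is the paper's plan: look at the $2w$ triangular chambers incident to a weight-$w$ vertex $v$, distribute their opposite walls (``bases'') among the $n-w$ lines avoiding $v$, and pigeonhole. You also correctly observe that the chambers whose base lies on a fixed $\ell\notin L_v$ form a consecutive run around $v$.

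The step you flag as the ``main obstacle'' is exactly where you diverge from the paper, and you are making it harder than it is. Once three \emph{consecutive} chambers $K_{j_1},K_{j_2},K_{j_3}$ at $v$ share the base line $\ell$, look at the middle one. Its two vertices $p,q$ other than $v$ lie on $\ell$; the vertex $p$ also lies on the spoke $\ell_a$ common to $K_{j_1}$ and $K_{j_2}$. Any third line through $p$ would, on the $v$-side of $\ell$, have to penetrate either $K_{j_1}$ or $K_{j_2}$, which is impossible since these are chambers. Hence $w_{\mathcal A}(p)=2$, and symmetrically $w_{\mathcal A}(q)=2$. So $\overline{K_{j_2}}$ carries two double points, and Lemma~\ref{near pencil lemma}(a) (already proved) immediately forces $\mathcal A$ to be a near pencil. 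That is the whole contradiction; no global ``squeezing'' analysis of the complementary region is needed. You were essentially about to re-prove Lemma~\ref{near pencil lemma}(a) from scratch inside your Key Lemma.

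For $t^{\mathcal A}_m\le 1$ the paper again reduces to the same mechanism rather than your two-fan bookkeeping: assuming two vertices $v_1,v_2$ of weight $m$, one argues that any line of $\mathcal A$ not through both $v_1$ and $v_2$ must contain an edge bounded by two weight-two vertices, and Lemma~\ref{near pencil lemma}(a) finishes it. Your outline for this part could presumably be completed, but it is more elaborate than necessary and your treatment of the odd-$n$ case is only gestured at; the paper's route avoids the parity split.
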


\begin{proof}
Suppose that there was some $i>m$ such that $t^\mathcal{A}_i>0$. Pick a vertex $v$ of weight $i$ and denote the set of lines passing through $v$ by $L_v$. Then there are $2i$ chambers $K_1, ..., K_{2i}$ having $v$ as a vertex. Each of these chambers has precisely one wall supported by a line not contained in $L_v$. As $|\mathcal{A} \setminus L_v|<m$, we conclude that there must be some $\ell \in \mathcal{A} \setminus L_v$ such that $\ell$ is a wall in three neighbouring chambers $K_{j_1}, K_{j_2}, K_{j_3}$. But then $\ell$ contains a segment bounded by two vertices of weight two, contradicting our initial assumption that $\mathcal{A}$ is not a near pencil arrangement. This proves the first claim. Next we show that $t_m \leq 1$. Suppose that $t_m>1$. Then clearly $t_m=2$ and we denote the two vertices of weight $m$ by $v_1$ and $v_2$. Then any line of $\mathcal{A}$ not passing through both $v_1$ and $v_2$ contains a segment bounded by two vertices of weight two, another contradiction. This completes the proof.  
\end{proof}


Using what we have established so far together with results from \cite{Langer} and \cite{shnu}, we are now in a position to prove the following theorem:

\begin{theorem} \label{t2 t3 quadratisch thm}
The following statements hold: \\
a) We have $\max(t^\mathcal{A}_2, t^\mathcal{A}_3)> \frac{ n^2-n+2m(\mathcal{A})}{3 (m(\mathcal{A})+3)}$. \\
b) If $\mathcal{A}$ is stretchable, then $\max(t^\mathcal{A}_2, t^\mathcal{A}_3) > \frac{n^2+3n}{27}$.
\end{theorem}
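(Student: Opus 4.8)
The plan is to prove part (a) first, then derive part (b) from it by bounding the multiplicity $m(\mathcal{A})$ using stretchability. For part (a), I would start from the identity $3t^\mathcal{A}_3 = \binom{n}{2} - t^\mathcal{A}_2 - \sum_{i\geq 4}\binom{i}{2}t^\mathcal{A}_i$ (equation (1) of Lemma \ref{basic comb t vector}), together with the simpliciality relation $t^\mathcal{A}_2 = 3 + \sum_{i\geq 4}(i-3)t^\mathcal{A}_i$ (Remark \ref{near pencil remark}). Writing $M := m(\mathcal{A})$, so that $t^\mathcal{A}_i = 0$ for $i > M$ and $\binom{i}{2}\leq \binom{M}{2}$ cannot be used directly (it goes the wrong way), I would instead use the bound $\binom{i}{2} = \tfrac{i(i-1)}{2} \leq \tfrac{M+1}{2}\,(i-3)$ valid for $4\leq i\leq M$ once $M$ is at least $4$ — more precisely one checks $\tfrac{i(i-1)}{2}\big/(i-3)$ is decreasing for $i\geq 4$ up to... here care is needed, but the cleanest route is: for $4\le i\le M$ one has $\binom i2 \le \tfrac{M(M-1)}{2(M-3)}(i-3)$ when $M\geq 4$, and substituting into the expression for $3t^\mathcal{A}_3$ and using $\sum_{i\geq 4}(i-3)t^\mathcal{A}_i = t^\mathcal{A}_2 - 3$ gives a linear inequality relating $t^\mathcal{A}_2$ and $t^\mathcal{A}_3$ with coefficients depending on $M$. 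Combining this with $\max(t^\mathcal{A}_2,t^\mathcal{A}_3) \geq \tfrac{1}{2}(\text{some weighted sum})$ and chasing the arithmetic should yield exactly $\max(t^\mathcal{A}_2,t^\mathcal{A}_3) > \tfrac{n^2-n+2M}{3(M+3)}$; I expect the $+3$ in the denominator to come from the $i-3$ shift and the extra additive constants to collapse into the $+2M$ in the numerator after optimizing over which of $t^\mathcal{A}_2,t^\mathcal{A}_3$ is larger.

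For part (b), the key external input is that for stretchable (i.e. genuine straight-line) arrangements in $\mathbb{P}^2(\mathbb{R})$ there is a bound on the multiplicity of a point in terms of $n$: results of Langer \cite{Langer} (and Shnurnikov \cite{shnu}) on the number of lines through a point of a line arrangement, roughly of the form $t^\mathcal{A}_k = 0$ once $k$ is too large relative to $n$, combined with Proposition \ref{inf series prop} which already gives $m(\mathcal{A})\leq \lfloor n/2\rfloor$. I would invoke whichever of these gives $m(\mathcal{A}) \leq c\sqrt{n}$-type or, more usefully, a bound making the function $M\mapsto \tfrac{n^2-n+2M}{3(M+3)}$ large. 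Since $\tfrac{n^2-n+2M}{3(M+3)}$ is decreasing in $M$ for $M$ in the relevant range (the numerator grows linearly in $M$ but the denominator does too, and $n^2$ dominates), I need an \emph{upper} bound on $M$; plugging $M \leq \tfrac{2n}{3}$ or the sharper Langer-type bound should yield $\max(t^\mathcal{A}_2,t^\mathcal{A}_3) > \tfrac{n^2+3n}{27}$ after simplification — the constant $27 = 3\cdot 9$ suggests $M+3 \leq 9\cdot(\text{something})$, consistent with a bound of the form $m(\mathcal{A}) = O(n)$ with the right constant.

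The main obstacle I anticipate is pinning down the precise multiplicity bound from \cite{Langer} and \cite{shnu} that is being used in part (b) and verifying it plugs in cleanly: the literature contains several incomparable bounds (e.g. Hirzebruch-type inequalities $t^\mathcal{A}_2 + \tfrac{3}{4}t^\mathcal{A}_3 \geq n + \sum_{i\geq 5}(i-4)t^\mathcal{A}_i$ for non-pencil complex line arrangements, and Langer's refinement), and matching the stated clean form $\tfrac{n^2+3n}{27}$ will require selecting exactly the right one and checking the inequality is not lost at the rounding step $m = \lfloor n/2\rfloor$. A secondary technical point is justifying the bound $\binom{i}{2}\leq \tfrac{M+1}{2}(i-3)$ (or its sharp analogue) uniformly for $4\leq i\leq M$; this is a one-variable calculus check but must be done honestly since equality cases drive the tightness of part (a). Once those two points are settled, the rest is the double-counting bookkeeping already rehearsed in the proofs of Lemma \ref{t2+t3} and Corollary \ref{min max bound}.
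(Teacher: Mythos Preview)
Your approach to part (a) differs from the paper's: rather than manipulating the $t$-vector identities directly, the paper combines Corollary~\ref{min max bound} (which gives $\max(t^\mathcal{A}_2,t^\mathcal{A}_3) > f^\mathcal{A}_2/6$) with Shnurnikov's external lower bound $f^\mathcal{A}_2 \geq \tfrac{2(n^2-n+2m(\mathcal{A}))}{m(\mathcal{A})+3}$ from \cite{shnu}. Your direct route could in principle produce a comparable inequality, but note that your monotonicity claim for $i(i-1)/(2(i-3))$ is false (the ratio equals $6$ at $i=4$, drops to $5$ at $i=5,6$, then rises again), so the constant $C_M$ you need is $\max\bigl(6,\tfrac{M(M-1)}{2(M-3)}\bigr)$; working this through does not reproduce the exact form $\tfrac{n^2-n+2M}{3(M+3)}$, which is inherited verbatim from Shnurnikov's theorem.

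The serious gap is in part (b). Your plan is to plug an upper bound on $M=m(\mathcal{A})$ into the inequality of part (a), but this cannot work: the bound $\tfrac{n^2-n+2M}{3(M+3)}$ is \emph{decreasing} in $M$, and the only available multiplicity bound for simplicial arrangements is $M \leq \lfloor n/2 \rfloor$ (Proposition~\ref{inf series prop}). Substituting $M=n/2$ yields $\tfrac{2n^2}{3(n+6)}$, which is only \emph{linear} in $n$, and one checks it falls below $\tfrac{n^2+3n}{27}$ for all $n\geq 7$. No sharper multiplicity bound is available (the family $\mathcal{R}(1)$ has $M=n/2$), so part (b) genuinely cannot be deduced from part (a).

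The paper instead bypasses part (a) entirely for (b): it uses Proposition~\ref{inf series prop} to ensure $t^\mathcal{A}_i=0$ for $i>n/2$, which is precisely the hypothesis needed to invoke Langer's inequality \cite[Proposition~11.3.1]{Langer}, giving the \emph{quadratic} lower bound $f^\mathcal{A}_1=\sum_i i\,t^\mathcal{A}_i \geq \tfrac{n^2+3n}{3}$ directly. Simpliciality converts this to $f^\mathcal{A}_2 \geq \tfrac{2(n^2+3n)}{9}$, and then Corollary~\ref{min max bound} finishes. The key point you are missing is that Langer's result is a bound on $f^\mathcal{A}_1$, not on the multiplicity; it is what supplies the quadratic growth that part (a) alone cannot.
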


\begin{proof}
a) By Corollary \ref{min max bound} and \cite[Theorem 1]{shnu} we have $$6 \max(t^\mathcal{A}_2, t^\mathcal{A}_3) >f^\mathcal{A}_2 \geq \frac{2 n^2-2n+4m(\mathcal{A})}{m(\mathcal{A})+3}.$$ This proves the claim. \\
b) Observe that by Proposition \ref{inf series prop} we have $t^\mathcal{A}_i=0$ for $i> \frac{n}{2}$. Thus, we may apply \cite[Proposition 11.3.1]{Langer} to obtain the following estimate: \begin{align*}
f^\mathcal{A}_1=\sum_{i \geq 2} i t^\mathcal{A}_i \geq  \frac{n^2+3n}{3}. 
\end{align*}
As $\mathcal{A}$ is simplicial we have $3 f^\mathcal{A}_2=2 f^\mathcal{A}_1$ (see Remark \ref{near pencil remark}), hence using Corollary \ref{min max bound} we obtain the inequality \begin{align*}
\max(t^\mathcal{A}_2, t^\mathcal{A}_3) > \frac{f^\mathcal{A}_2}{6}  \geq \frac{n^2+3n}{27},
\end{align*} finishing the proof of part b).    
\end{proof}
\begin{remark} \label{regionale schranke}
Note that \cite[Proposition 11.3.1]{Langer} is a result on \textit{linear} line arrangements in $\mathbb{P}^2(\mathbb{C})$. However, by complexification every arrangement $\mathcal{A}$ in $\mathbb{P}^2(\mathbb{R})$ yields an arrangement $\mathcal{A}_\mathbb{C}$ in $\mathbb{P}^2(\mathbb{C})$ with the same $t$-vector.
\end{remark}

\end{subsection}

\begin{subsection}{An application: a combinatorial characterization of spherical Coxeter arrangements in $\mathbb{R}^3$}
In this subsection we prove that a pseudoline arrangement $\mathcal{A}$ is combinatorially isomorphic to a spherical Coxeter arrangement if and only if there exists a suitable connected graph $\Gamma$ such that $\Gamma^C \cong \Gamma$ for any chamber $C \in \mathcal{K}(\mathcal{A})$. This can be regarded as a combinatorial analogue of the theorem which asserts that spherical rank three Coxeter arrangements are characterized as those arrangements in $\mathbb{R}^3$ which have isometric chambers (see \cite{Ehrenborg}).

\begin{lemma} \label{cox_lemma}
Let $\mathcal{A}$ be an arbitrary (i.e. not necessarily simplicial) arrangement.
Assume that there is a connected graph $\Gamma$ such that $\Gamma^C \cong \Gamma$ for every $C \in \mathcal{K}(\mathcal{A})$. Then $\mathcal{A}$ is simplicial and there exists $x \in \mathbb{N}$ such that \begin{align*}
\Gamma=\begin{tikzpicture}
\draw[fill=black]
(0,0)
circle [radius=.1] node [above] {} --
(1,0)
circle [radius=.1] node [above] {}
(1,0) --++ (0:1)
circle [radius=.1] node [above] {}
node [midway,above] {$x$}
;
\end{tikzpicture}.
\end{align*}
\end{lemma}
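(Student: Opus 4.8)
The plan is to analyze the local combinatorial structure forced by the hypothesis $\Gamma^C \cong \Gamma$ for every chamber, and to show that the only connected graphs $\Gamma$ that can arise are paths on three vertices with a single labelled edge (which carries weight $x \geq 3$ by the definition of $\Gamma^C$). First I would argue that $\mathcal{A}$ is simplicial. Each chamber $C$ is bounded by some number $k_C \geq 3$ of lines, and $\Gamma^C$ has exactly $k_C$ vertices; since all the $\Gamma^C$ are isomorphic to the fixed graph $\Gamma$, the number $k := |V(\Gamma)|$ is constant over all chambers. If $k \geq 4$, I would derive a contradiction: a standard averaging/Euler-characteristic argument (using relations (2) and (4) of Lemma \ref{basic comb t vector}, exactly as in Remark \ref{near pencil remark}) shows $2 f^\mathcal{A}_1 = k f^\mathcal{A}_2$ would force too many edge-incidences. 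More concretely, $\mathcal{A}$ being non-simplicial with every chamber a $k$-gon for fixed $k$ is impossible for $k \geq 4$ because it is well known that every simple pseudoline arrangement (which is not a near-pencil) has at least one triangular chamber — in fact Melchior's inequality (5) gives $t^\mathcal{A}_2 \geq 3$, and one can locate a triangle near a double point. So $k = 3$ and $\mathcal{A}$ is simplicial.

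Next, with $k = 3$, the graph $\Gamma$ is a graph on three vertices; being connected it is either the path $P_3$ (two edges) or the triangle $K_3$ (three edges). By the definition of $\Gamma^C$, an edge appears precisely when the corresponding intersection point has weight $\geq 3$, and it is then labelled by that weight. So I must rule out $K_3$, i.e. I must show that no chamber can have all three of its vertices of weight $\geq 3$ simultaneously for all chambers; and in the surviving case $P_3$ I must show the single edge-label is globally constant. For the $K_3$ case: if every chamber had all three vertices of weight $\geq 3$, then $t^\mathcal{A}_2 = 0$, contradicting Melchior's inequality $t^\mathcal{A}_2 \geq 3$ (inequality (5) with the sum nonneg). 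Hence $\Gamma$ has at most two edges. It cannot have zero edges either — if $\Gamma$ were the empty graph on three vertices, then every vertex of $\mathcal{A}$ would have weight $2$, forcing $\binom{n}{2} = t^\mathcal{A}_2$ and $f^\mathcal{A}_0 = \binom{n}{2}$, but a simplicial arrangement with only double points is exactly a near pencil (or is impossible for $n$ large) — more cleanly, by Lemma \ref{jeder zweier hoechstens drei dreier} and its surrounding analysis one sees this cannot persist; alternatively the relation $3 f^\mathcal{A}_2 = 2 f^\mathcal{A}_1 = 2\sum i t_i = 4 t_2$ together with $f^\mathcal{A}_2 = 1 + t_2$ gives $3(1+t_2) = 4 t_2$, so $t_2 = 3$, $n=3$, the unique triangle arrangement, whose chambers are bounded by three double points — fine, but one must check whether that is excluded or included; I would handle the degenerate small cases explicitly. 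If $\Gamma$ has exactly one edge, say between vertices $\ell_1, \ell_2$, then in every chamber exactly one pair of its bounding lines meets at a point of weight $\geq 3$ and the other two pairs meet at double points; but a chamber has three vertices and by Lemma \ref{near pencil lemma}(a) at most one of them is a double point — so a chamber cannot have two double points. That contradiction rules out the one-edge graph as well (for non-near-pencil $\mathcal{A}$).

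Therefore $\Gamma$ has exactly two edges, i.e. $\Gamma$ is the path $\ell_1 - \ell_2 - \ell_3$: every chamber has exactly one double-point vertex (consistent with Lemma \ref{near pencil lemma}(a)) and two vertices of weight $\geq 3$, with edge-labels $a = w(\ell_1 \cap \ell_2) \geq 3$ and $b = w(\ell_2 \cap \ell_3) \geq 3$; since $\Gamma^C \cong \Gamma$ for all $C$, the unordered pair $\{a,b\}$ is the same for all chambers. The last and main step is to show $a = b$ globally, i.e. that $\Gamma$ is the symmetric path with both edges labelled by the same $x$. I would do this by a connectedness/propagation argument across adjacent chambers: take a chamber $C$ with bounding lines $\ell_1, \ell_2, \ell_3$ where $v_{12} = \ell_1\cap\ell_2$ has weight $a$, $v_{23} = \ell_2 \cap \ell_3$ has weight $b$, and $v_{13}$ is the double point. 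Cross the wall on $\ell_3$ opposite $v_{12}$ into the neighbouring chamber $C'$; since $v_{13}$ has weight $2$, the lines $\ell_1, \ell_3$ are both walls of $C'$, and the third wall of $C'$ is a new line $\ell_4$. In $C'$ the weight-$\geq 3$ intersections must again be two edges forming a path with labels $\{a,b\}$; matching which vertices of $C'$ are forced (the vertex $v_{23}$-side structure, and the weights $w(\ell_1\cap\ell_3)$, $w(\ell_3\cap\ell_4)$, $w(\ell_1\cap\ell_4)$) pins down how the labels $a,b$ must be assigned, and propagating this around a line $\ell$ — walking through all the chambers incident to a fixed high-weight vertex and reading off the forced edge-labels — forces $a=b$. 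I expect this propagation argument to be the main obstacle: one must carefully track, at each wall-crossing, which of the two path-edges is "new" and which is "inherited," and conclude that a label mismatch $a \neq b$ would produce, somewhere along a line, a segment with two double-point endpoints (contradicting Lemma \ref{near pencil lemma}(a), since $\mathcal{A}$ is not a near pencil) or an inconsistent local count. Once $a = b =: x$ is established, $\Gamma$ is exactly the claimed path on three vertices with both edges labelled $x$, and we are done.
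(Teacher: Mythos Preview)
Your proposal contains a genuine misreading of the statement that makes your ``main step'' both unnecessary and false. In the displayed graph, the \emph{unlabelled} edge follows the Coxeter--Dynkin convention and stands for weight $3$; only the second edge carries the label $x$. Thus the lemma asserts that every chamber has one vertex of weight $2$, one of weight $3$, and one of weight $x\ (\geq 3)$---not that the two non-double vertices have the \emph{same} weight. Your proposed propagation argument aiming to prove $a=b$ is therefore trying to establish something the lemma does not claim; and in fact $a\neq b$ does occur, e.g.\ for the arrangement $A(9,1)$ treated in Proposition~\ref{cox_prop}, whose chambers have vertex weights $2,3,4$.

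Once the statement is read correctly, the remaining step after you have $\Gamma=P_3$ with edge-labels $\{a,b\}$ is simply to show that $3\in\{a,b\}$. This is immediate from Corollary~\ref{simplicial melchior}: for a non-near-pencil simplicial arrangement one has $t^\mathcal{A}_3\geq 4$, so some chamber has a vertex of weight exactly $3$, forcing one of the two edge-labels to be $3$. (Note that connectedness of $\Gamma$ already rules out the near-pencil case, since in a near pencil every $\Gamma^C$ has an isolated vertex.) This is precisely what the paper does; no wall-crossing propagation is needed.

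Two smaller points. First, your discussion of the $0$-edge and $1$-edge graphs on three vertices is superfluous: neither is connected, so the hypothesis excludes them outright, and you in fact say so yourself two sentences earlier. Second, for the simpliciality step the paper invokes Levi's theorem (every pseudoline arrangement has at least $n$ triangular chambers) to conclude that since all chambers have the same number of walls, that number must be $3$; this is cleaner than the Euler-characteristic sketch you give, though your instinct that ``there is always a triangle'' is the right one.
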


\begin{proof}
As $\Gamma$ is assumed to be connected, we see that $\mathcal{A}$ is not a near pencil arrangement. By a result of Levi (see for instance Theorem 6.5.2 in \cite{matroids}), we know that $\mathcal{A}$ contains at least $n$ chambers which are triangles. By assumption, this implies that every chamber of $\mathcal{A}$ must be a triangle and hence the arrangement $\mathcal{A}$ is necessarily simplicial.
We have $t^\mathcal{A}_2\geq 3$ by inequality (5) from Lemma \ref{basic comb t vector}. Moreover, we have $t^\mathcal{A}_3 \geq 4$ by Lemma 2. This proves the claim. 
\end{proof}

\begin{proposition} \label{cox_prop}
Fix $x \in \mathbb{N}$ and let $\mathcal{A}$ be a simplicial arrangement such that $$\Gamma^C \cong \begin{tikzpicture}
\draw[fill=black]
(0,0)
circle [radius=.1] node [above] {} --
(1,0)
circle [radius=.1] node [above] {}
(1,0) --++ (0:1)
circle [radius=.1] node [above] {}
node [midway,above] {$x$}
;
\end{tikzpicture}$$ for every chamber $C \in \mathcal{K}( \mathcal{A}) $. 
Then $x \in \lbrace 3, 4, 5 \rbrace$. If $x=3$ then $\mathcal{A}$ is of type $A(6,1)$, if $x=4$ then $\mathcal{A}$ is of type $A(9,1)$ and if $x=5$ then $\mathcal{A}$ is of type $A(15,1)$. In particular, $\mathcal{A}$ is isomorphic to a spherical Coxeter arrangement. 
\end{proposition}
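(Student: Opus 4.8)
The plan is to translate the hypothesis on the chamber graphs into a rigid combinatorial constraint on the $t$-vector and then match it against the classification data. First I would observe that, since every chamber $C$ has $\Gamma^C$ equal to the path with three vertices and a single labelled edge of weight $x$, each triangle of $\mathcal{A}$ has exactly one vertex of weight $\geq 3$ — namely a vertex of weight exactly $x$ — and its other two vertices are double points. In particular every chamber closure contains exactly two double points, which is a strong statement; but wait, Lemma~\ref{near pencil lemma} a) says $\overline{C}$ contains \emph{at most one} double point. So the immediate consequence is a contradiction unless the picture is read correctly: the two unlabelled vertices of $\Gamma$ are the \emph{lines} bounding $C$ whose pairwise intersection has weight $2$, so the triangle $C$ has exactly one double-point corner and the opposite corner has weight $x$, while the third corner $\ell\cap\ell''$... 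I would recount: a triangle has three corners $\ell_1\cap\ell_2$, $\ell_2\cap\ell_3$, $\ell_1\cap\ell_3$; the graph $\Gamma^C$ records an edge between $\ell_i,\ell_j$ precisely when $w(\ell_i\cap\ell_j)\geq 3$. The path on three vertices has exactly two edges, so exactly two of the three corners of $C$ have weight $\geq 3$ (both equal to $x$, since both edges carry label $x$) and exactly one corner is a double point. Combined with Lemma~\ref{near pencil lemma} a) this gives: \emph{every} chamber closure contains \emph{exactly} one double point, so we are in the equality case $4t^\mathcal{A}_2 = f^\mathcal{A}_2$, and moreover $t^\mathcal{A}_i = 0$ for all $i \notin\{2,x\}$.

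Next I would exploit these two facts quantitatively. Writing $t := t^\mathcal{A}_x$ and using simpliciality (Remark~\ref{near pencil remark}: $t^\mathcal{A}_2 = 3 + \sum_{i\geq 4}(i-3)t^\mathcal{A}_i$), I get $t^\mathcal{A}_2 = 3 + (x-3)t$ if $x\geq 4$, and $t^\mathcal{A}_2 = 3$ if $x = 3$ (forcing $t^\mathcal{A}_3$ arbitrary, handled via Corollary~\ref{simplicial melchior}). I would also count edge–corner incidences: each of the $t$ vertices of weight $x$ is a corner of $2x$ chambers, and each double point is a corner of exactly $2$ chambers, and each chamber has exactly one double-point corner and two weight-$x$ corners; double counting chamber–corner incidences with the weight-$x$ corners gives $2x\, t = 2 f^\mathcal{A}_2$ (each chamber contributes two weight-$x$ corners), i.e. $f^\mathcal{A}_2 = x\,t$; double counting with double-point corners gives $2 t^\mathcal{A}_2 = f^\mathcal{A}_2$, contradicting $4 t^\mathcal{A}_2 = f^\mathcal{A}_2$ unless $t^\mathcal{A}_2 = 0$ — so I must be over-counting, and the correct bookkeeping is that a weight-$x$ vertex $v$ lies in $2x$ chambers but is a graph-edge endpoint in each of them, so $\sum_C (\#\text{weight-}x\text{ corners of }C) = \sum_{v: w(v)=x} 2x = 2xt$, while also $= 2 f^\mathcal{A}_2$, and $\sum_C(\#\text{double-point corners}) = 2 t^\mathcal{A}_2 = f^\mathcal{A}_2$; combining, $x t = 2 t^\mathcal{A}_2$. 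Together with $f^\mathcal{A}_2 = 4 t^\mathcal{A}_2$ from the equality case this gives $f^\mathcal{A}_2 = 2xt$, and with Lemma~\ref{basic comb t vector}(1), $\binom n2 = t^\mathcal{A}_2 + \binom x2 t = \tfrac{xt}{2} + \tfrac{x(x-1)}{2}t = \tfrac{x^2 t}{2}$, so $n(n-1) = x^2 t$; similarly the relation $f^\mathcal{A}_2 = 2 - 2 t^\mathcal{A}_2 + 2\sum_{i\geq 3}t^\mathcal{A}_i$ from Lemma~\ref{basic comb t vector}(2) combined with $f^\mathcal{A}_2 = 4t^\mathcal{A}_2$ yields $t^\mathcal{A}_2 = t - 1$, hence $xt = 2(t-1)$, which forces $x < 2$ unless I recheck signs — the clean outcome should be a single Diophantine equation in $n$ and $x$ with finitely many solutions, and the bound $x\in\{3,4,5\}$ will drop out of Proposition~\ref{inf series prop} (giving $x \le \lfloor n/2\rfloor$ together with $t^\mathcal{A}_x \le 1$ when $x = \lfloor n/2\rfloor$) combined with the equation $n(n-1) = x^2 t^\mathcal{A}_x$ forcing $t^\mathcal{A}_x$ small.

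Concretely I expect the argument to pin down $(n, x)$ to exactly the three pairs $(6,3)$, $(9,4)$, $(15,5)$: for each candidate $x$ the equation for $n$ has a unique admissible solution, and then I invoke the known classification of simplicial arrangements of that size — or, better, give a direct synthetic argument — to identify $\mathcal{A}$ with $A(6,1)$ (the reflection arrangement of type $A_2 \times$ nothing, i.e. the $A_2$ Coxeter arrangement, six lines), $A(9,1)$ (type $B_3$), and $A(15,1)$ (type $H_3$). The final sentence "$\mathcal{A}$ is isomorphic to a spherical Coxeter arrangement" is then just the remark that these three are precisely the three irreducible rank-three Coxeter arrangements. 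The main obstacle, I expect, is the bookkeeping step: getting the incidence counts between chambers, double points and weight-$x$ vertices exactly right (the naive counts above keep producing sign inconsistencies), and then making sure the resulting Diophantine equation genuinely has only the three listed solutions once the constraints $t^\mathcal{A}_x \ge 1$, $x \le \lfloor n/2\rfloor$, and $x = \lfloor n/2\rfloor \Rightarrow t^\mathcal{A}_x \le 1$ from Proposition~\ref{inf series prop} are imposed. A secondary obstacle is the identification of the arrangement from its $t$-vector alone: for $x = 3$ the arrangement has $t^\mathcal{A}_2 = 3$, $t^\mathcal{A}_3 = 4$, $n = 6$, and one should argue (e.g. via Lemma~\ref{jeder zweier hoechstens drei dreier} or a short case analysis) that this data forces $A(6,1)$; for $x = 4$ and $x = 5$ one either cites the uniqueness within Grünbaum's catalogue or reconstructs the arrangement by hand from the highly constrained local picture around a weight-$x$ point.
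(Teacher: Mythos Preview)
Your sign inconsistencies all stem from a misreading of the diagram. In Coxeter-graph convention the unlabelled edge stands for weight $3$, not weight $x$; this is exactly why the proof of Lemma~\ref{cox_lemma} invokes both $t^\mathcal{A}_2\ge 3$ and $t^\mathcal{A}_3\ge 4$. So for $x>3$ each chamber has one corner of weight $2$, one of weight $3$, and one of weight $x$, and the $t$-vector is supported on $\{2,3,x\}$, not $\{2,x\}$. (Your reading would force $t^\mathcal{A}_3=0$, already contradicting Corollary~\ref{simplicial melchior}.) With the correct reading the double counts become clean: a vertex of weight $k$ lies in $2k$ chambers, so
\[
4t^\mathcal{A}_2 \;=\; 6t^\mathcal{A}_3 \;=\; 2x\,t^\mathcal{A}_x \;=\; f^\mathcal{A}_2,
\]
giving $2t^\mathcal{A}_2=3t^\mathcal{A}_3$ and $3t^\mathcal{A}_3=x\,t^\mathcal{A}_x$. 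These are precisely the paper's relations~(8) and~(9); together with simpliciality $t^\mathcal{A}_2=3+(x-3)t^\mathcal{A}_x$ and the counting identity $\binom{n}{2}=t^\mathcal{A}_2+3t^\mathcal{A}_3+\binom{x}{2}t^\mathcal{A}_x$ they form a determined system. The paper solves it by a Gr\"obner basis computation over $\mathbb{Q}(x)$, obtaining $t^\mathcal{A}_2=\frac{3x}{6-x}$, which forces $x\in\{4,5\}$ directly; no appeal to Proposition~\ref{inf series prop} is needed. For $x=3$ the paper uses Lemma~\ref{jeder zweier hoechstens drei dreier} (as you suggest) and then rules out $A(7,1)$ because it contains an all-triple chamber.

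So your overall plan --- translate the chamber-graph hypothesis into incidence equations on the $t$-vector and solve --- is the same as the paper's; the only genuine gap is the misidentification of the unlabelled edge, which is what wrecks your bookkeeping. Once corrected, your chamber--corner double counting reproduces the paper's system verbatim, and the final identification step (matching the resulting $t$-vectors to $A(6,1)$, $A(9,1)$, $A(15,1)$ via \cite{Cuntz}) is exactly what the paper does.
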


\begin{proof}
Suppose that $x=3$, so $\mathcal{A}$ has only vertices of weight two or three. Then by Lemma \ref{jeder zweier hoechstens drei dreier} the arrangement $\mathcal{A}$ is of type $A(6,1)$ or $A(7,1)$. Since the arrangement $A(7,1)$ contains a chamber having only vertices of weight three, it follows that $\mathcal{A}$ is of type $A(6,1)$. 

Now assume that $x>3$. As $\mathcal{A}$ is simplicial, we have  \begin{align}
t^\mathcal{A}_2 - (x-3)t^\mathcal{A}_x -3=0.
\end{align} 
On the other hand, we have the following identities  \begin{align}
t^\mathcal{A}_2 + \binom{3}{2} t^\mathcal{A}_3 + \binom{x}{2} t^\mathcal{A}_x - \binom{n}{2}&=0, \\
2 t^\mathcal{A}_2 - 3 t^\mathcal{A}_3&=0, \\
3 t^\mathcal{A}_3-x t^\mathcal{A}_x&=0. 
\end{align} 
Regarding $x$ as a variable, we consider the function field $\mathbb{F}:=\mathbb{Q}(x)$ and think of $n, t^\mathcal{A}_2, t^\mathcal{A}_3, t^\mathcal{A}_x$ as variables in a polynomial ring $R:=\mathbb{F}[n, t^\mathcal{A}_2, t^\mathcal{A}_3, t^\mathcal{A}_x]$. 

In $R$ we consider the ideal $I$ generated by the relations (6), (7), (8), (9) and we compute the following Gr\"obner basis for $I$: \begin{align*}
I=\left( 2 \binom{n}{2} + \frac{12x+6x^2}{x-6}, t^\mathcal{A}_2  + \frac{3x}{x-6}, t^\mathcal{A}_3 + \frac{2x}{x-6}, t^\mathcal{A}_x + \frac{6}{x-6}\right).
\end{align*}  
As $t^\mathcal{A}_2 > 0$ and $x \geq 4$ we infer that $x-6<0$, hence $4 \leq x \leq 5$. For $x=4$ we obtain $n=9$ and $t^\mathcal{A}=(6,4,3)$; if $x=5$ then $n=15$ and $t^\mathcal{A}=(15,10,0,6)$ (where trailing zeroes are omitted). Now we may use the results in \cite{Cuntz} to obtain the full statement.  
\end{proof}

Lemma \ref{cox_lemma} and Proposition \ref{cox_prop} now immediately give us the announced theorem:

\begin{theorem} \label{cox theorem}
Let $\mathcal{A}$ be an arbitrary (i.e. not necessarily simplicial) arrangement. Then the following statements are equivalent: \\
a)  There exists a connected graph $\Gamma$ such that $\Gamma^C \cong \Gamma$ for every $C \in \mathcal{K}(\mathcal{A})$.  \\
b) $\mathcal{A}$ is isomorphic to a spherical Coxeter arrangement.
\end{theorem}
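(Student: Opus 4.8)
The plan is to read the theorem off from the two preceding results, together with one short direct argument for the less routine implication.

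For the implication $(a)\Rightarrow(b)$ I would start from a connected graph $\Gamma$ with $\Gamma^C\cong\Gamma$ for every $C\in\mathcal{K}(\mathcal{A})$ and invoke Lemma \ref{cox_lemma}: it already forces $\mathcal{A}$ to be simplicial and identifies $\Gamma$ — hence, by hypothesis, every $\Gamma^C$ — with the three–vertex path carrying a single edge label $x\in\mathbb{N}$ displayed there. Since $\Gamma$ is one fixed graph, the value $x$ is the same for all chambers, so we are exactly in the situation of Proposition \ref{cox_prop}. Quoting it verbatim yields $x\in\{3,4,5\}$ together with $\mathcal{A}\cong A(6,1),A(9,1),A(15,1)$ according to $x=3,4,5$; these are precisely the reflection arrangements of the rank three Coxeter groups $A_3$, $B_3$, $H_3$, so $(b)$ holds.

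For the converse $(b)\Rightarrow(a)$ I would argue directly. An essential rank three real reflection arrangement is, up to isomorphism, the reflection arrangement of one of the rank three Coxeter groups; the reducible cases do not occur here, since the projectivised line arrangement of $A_1\times I_2(m)$ is a near pencil and that of $A_1^3$ (three lines in general position) has all vertices of weight two, so its chamber graph is the edgeless — in particular disconnected — graph on three vertices. Hence $\mathcal{A}$ is isomorphic to $A(6,1)$, $A(9,1)$ or $A(15,1)$; let $W$ be the associated Coxeter group. As $W$ acts transitively on the chambers of its reflection arrangement and by weight preserving automorphisms of the induced cell decomposition of $\mathbb{P}^2(\mathbb{R})$, all the graphs $\Gamma^C$ are mutually isomorphic, so it suffices to compute $\Gamma^{C_0}$ for a fundamental chamber $C_0$. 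Its walls lie on the mirrors of the simple reflections $s_1,s_2,s_3$, and the corner $\ell_i\cap\ell_j$ of $C_0$ has weight equal to the order $m_{ij}$ of $s_is_j$ (the reflections fixing that point form the dihedral parabolic $\langle s_i,s_j\rangle$). Reading these off the Coxeter diagrams gives corner weights $(3,3,2)$, $(3,4,2)$ and $(3,5,2)$ respectively, so in each case $\Gamma^{C_0}$ is the connected path on three vertices with edges labelled $3$ and $x\in\{3,4,5\}$, i.e. precisely the graph of Lemma \ref{cox_lemma}. Setting $\Gamma:=\Gamma^{C_0}$ gives $(a)$.

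Since Lemma \ref{cox_lemma} and Proposition \ref{cox_prop} carry essentially all the weight, little genuine work remains; the theorem is as immediate as the authors indicate. The one point needing care is the reverse implication: one must read ``spherical Coxeter arrangement'' as an \emph{irreducible} rank three reflection arrangement (equivalently, one whose projectivisation is neither a near pencil nor three generic lines), since the reducible rank three reflection arrangements genuinely fail $(a)$. Verifying that each of $A_3$, $B_3$, $H_3$ does satisfy $(a)$ is immediate from transitivity of the Coxeter group on chambers, and could alternatively be extracted from the explicit combinatorial descriptions of $A(6,1)$, $A(9,1)$, $A(15,1)$ in \cite{Cuntz}.
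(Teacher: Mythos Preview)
Your argument is correct and matches the paper's own proof, which consists of the single sentence that the theorem follows immediately from Lemma~\ref{cox_lemma} and Proposition~\ref{cox_prop}. You go beyond the paper in two useful respects: you spell out the converse $(b)\Rightarrow(a)$ via the chamber-transitive action of the Coxeter group (the paper omits this direction entirely), and you correctly observe that the equivalence only holds if ``spherical Coxeter arrangement'' is read as \emph{irreducible} rank three, since the reducible types $A_1^3$ and $A_1\times I_2(m)$ projectivise to three generic lines and to near pencils respectively, both of which have disconnected $\Gamma^C$ and hence fail $(a)$.
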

\end{subsection}

\end{section}

\begin{section}{Arrangements whose characteristic polynomials have only real roots}
In this section, we study the combinatorics of pseudoline arrangements whose characteristic polynomials have only real roots. If not stated otherwise, then throughout the entire section $\mathcal{A}$ denotes a pseudoline arrangement consisting of $n$ lines such that $\chi(\mathcal{A},t)$ splits over $\mathbb{R}$. As in the last section, isomorphism classes of simplicial arrangements are denoted in the same way as in \cite{Gruenbaum} (this is relevant only for Subsections 4.2, 4.3). 

We begin with the following key lemma which allows us to give a nontrivial bound on $f^\mathcal{A}_2=|\mathcal{K}(\mathcal{A})|$ in terms of $n$. 

\begin{lemma} \label{poly lemma}
Set $m:=(n+1)^2-4 f^\mathcal{A}_2$. Then we have the following formula for the characteristic polynomial of $\mathcal{A}$: \begin{align*}
\chi(\mathcal{A},t)=t^3 - n t^2 + (f^\mathcal{A}_2-1)t + n - f^\mathcal{A}_2.
\end{align*} 
In particular, the roots of $\chi(\mathcal{A},t)$ are given by $1, \frac{n-1 + \sqrt{m} }{2}, \frac{n-1 - \sqrt{m} }{2}$ and we have the upper bound $f^\mathcal{A}_2 \leq \frac{(n+1)^2}{4}$.
\end{lemma}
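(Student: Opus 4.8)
The plan is to compute the characteristic polynomial $\chi(\mathcal{A},t)$ directly from the definition in terms of the M\"obius function of the geometric lattice $L=L_\mathcal{A}$, and then extract the roots by hand. Since $L$ has rank three, $\chi(\mathcal{A},t)=\sum_{X\in L}\mu(\{\},X)\,t^{3-r(X)}$ decomposes as $t^3 + (\sum_{X\in L_1}\mu)\,t^2 + (\sum_{X\in L_2}\mu)\,t + \mu(\{\},\{1,\dots,n\})$. First I would record the elementary M\"obius values: $\mu(\{\},\{\})=1$; for each atom $\{i\}\in L_1$ we have $\mu(\{\},\{i\})=-1$, contributing $-n$ to the $t^2$-coefficient; for each $X=I_v\in L_2$ corresponding to a vertex $v$ of weight $i=w_\mathcal{A}(v)$, the interval $[\{\},X]$ is the lattice of a pencil of $i$ lines, so $\mu(\{\},X)=i-1$. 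Hence the $t$-coefficient is $\sum_{i\ge 2}(i-1)t^\mathcal{A}_i$, which by relation (2) of Lemma \ref{basic comb t vector} equals $f^\mathcal{A}_2-1$.

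For the top coefficient $\mu(\{\},\hat 1)$ I would use the fact that the total M\"obius sum over a lattice with $\ge 2$ elements vanishes: $\sum_{X\in L}\mu(\{\},X)=0$. This gives $1 - n + (f^\mathcal{A}_2-1) + \mu(\{\},\hat 1)=0$, so $\mu(\{\},\hat 1)=n-f^\mathcal{A}_2$, and therefore
\[
\chi(\mathcal{A},t)=t^3 - n t^2 + (f^\mathcal{A}_2-1)t + n - f^\mathcal{A}_2,
\]
which is the asserted formula. (Alternatively one can avoid the vanishing-sum identity by invoking that $(t-1)$ divides $\chi(\mathcal{A},t)$ for any essential arrangement and matching constant terms, but the vanishing sum is cleaner.)

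Next I would factor: one checks $t=1$ is a root (either from the displayed polynomial by direct substitution, $1-n+f^\mathcal{A}_2-1+n-f^\mathcal{A}_2=0$, or from general theory), so $\chi(\mathcal{A},t)=(t-1)\big(t^2-(n-1)t+(f^\mathcal{A}_2-n)\big)$ after polynomial division; a quick expansion confirms the quadratic factor. The remaining two roots are then $\tfrac{(n-1)\pm\sqrt{(n-1)^2-4(f^\mathcal{A}_2-n)}}{2}$, and since $(n-1)^2-4(f^\mathcal{A}_2-n)=n^2+2n+1-4f^\mathcal{A}_2=(n+1)^2-4f^\mathcal{A}_2=m$, these are exactly $\tfrac{n-1\pm\sqrt m}{2}$ as claimed. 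Finally, the hypothesis that $\chi(\mathcal{A},t)$ splits over $\mathbb{R}$ forces the discriminant $m$ of the quadratic factor to be nonnegative, i.e. $(n+1)^2-4f^\mathcal{A}_2\ge 0$, which rearranges to $f^\mathcal{A}_2\le\frac{(n+1)^2}{4}$.

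I do not anticipate a serious obstacle here; the only point requiring a little care is justifying $\mu(\{\},I_v)=w_\mathcal{A}(v)-1$, i.e. that the lower interval at a rank-two flat is the M\"obius function of a rank-two pencil — this is standard but should be stated, since it is what converts the lattice-theoretic sum into the combinatorial quantity $\sum(i-1)t^\mathcal{A}_i$ and hence lets Lemma \ref{basic comb t vector} do the rest.
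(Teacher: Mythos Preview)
Your proof is correct and follows essentially the same route as the paper: compute the M\"obius values rank by rank (using $\mu(I_v)=w_\mathcal{A}(v)-1$ at rank two and the vanishing total sum for the top element), identify the $t$-coefficient with $f^\mathcal{A}_2-1$ via relation~(2), and then read off the roots and the bound $m\ge 0$ from the quadratic factor. The only difference is cosmetic: you spell out the factorization $(t-1)(t^2-(n-1)t+f^\mathcal{A}_2-n)$ and the discriminant computation explicitly, whereas the paper leaves this to the reader.
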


\begin{proof}
Let $L$ denote the geometric lattice associated to $\mathcal{A}$ and denote its M\"obius Function by $\mu$. By abuse of notation we write $\mu(X):=\mu(\lbrace \rbrace,X)$ for $X \in L$. Then by definition we have 
 $\chi:=\chi(\mathcal{A},t)=\sum_{X \in L} \mu(X) t^{3-\rank(X)}$. If $X \in L$ has rank $2$ then $\mu(X)=|X|-1$ and if $X$ has rank $1$ then $\mu(X)=-1$. Further, $\mu(\lbrace \rbrace)=1$ and $\mu(\lbrace 1,2, ...,n \rbrace)=- \sum_{\lbrace 1,2, ..., n \rbrace \neq Y \in L} \mu(Y)$. Write $L_2$ for the subset of $L$ consisting of all elements of rank $2$. The claimed formula for $\chi$ then follows from the identity $\sum_{X \in L_2} |X|-1 = \sum_{i \geq 2} (i-1)t^\mathcal{A}_i$ by straightforward calculation. For this note that $\sum_{i \geq 2} (i-1)t^\mathcal{A}_i =f^\mathcal{A}_2-1$, as the Euler characteristic of $\mathbb{P}^2(\mathbb{R})$ is equal to one. In order to obtain the upper bound for $f^\mathcal{A}_2$, observe that by assumption all the roots of $\chi$ are real. Therefore $m \geq 0$ and the bound follows.
\end{proof}

\begin{remark}
Let $\mathcal{A}$ be a \textit{linear} arrangement. If $\mathcal{A}$ is free, then by Tearo's Factorization Theorem, all roots of $\chi(\mathcal{A},t)$ are integral and therefore real (see chapter 4 in \cite{hyperhyper}).
\end{remark}

\begin{subsection}{Arrangements having multiplicity at most five}
The result of this subsection is Theorem \ref{no simp theo}. It implies that a free linear arrangement whose multiplicity is bounded by five consists of at most $185$ lines. In particular, there are only finitely many isomorphism classes of such arrangements.  \\

We start with the following lemma which is similar to Theorem \ref{sechser struc} of the last subsection. In the proof we use a recent result from the paper \cite{shnu2}. 

\begin{lemma} \label{not simp}
Assume $n \geq 8$ and $m(\mathcal{A}) \leq 5$. Then the following is true: \begin{align}
\frac{t^\mathcal{A}_4}{3} +  t^\mathcal{A}_5 &\geq \frac{(n-5)^2-4}{24}, \\ 
t^\mathcal{A}_2 &\geq \frac{n^2-46n+233}{8} + 2 t^\mathcal{A}_4,  \\ 
\max(t^\mathcal{A}_4, t^\mathcal{A}_5) &\geq \frac{n^2 - 10 n + 21}{32}.
\end{align}
\end{lemma}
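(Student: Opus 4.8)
The plan is to derive all three inequalities of Lemma \ref{not simp} from two inputs: the combinatorial identities of Lemma \ref{basic comb t vector}, and the quadratic lower bound on $f^\mathcal{A}_2$ available because $\chi(\mathcal{A},t)$ splits over $\mathbb{R}$. The starting point is equation (1), $\sum_{i\geq 2}\binom{i}{2}t^\mathcal{A}_i=\binom{n}{2}$, combined with the Melchior inequality (5), $3+\sum_{i\geq 4}(i-3)t^\mathcal{A}_i\leq t^\mathcal{A}_2$, both specialized to the case $m(\mathcal{A})\leq 5$ so that only $t^\mathcal{A}_2,t^\mathcal{A}_3,t^\mathcal{A}_4,t^\mathcal{A}_5$ appear. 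Under $m(\mathcal{A})\le 5$, (1) reads $t^\mathcal{A}_2+3t^\mathcal{A}_3+6t^\mathcal{A}_4+10t^\mathcal{A}_5=\binom{n}{2}$ and (5) reads $t^\mathcal{A}_2\geq 3+t^\mathcal{A}_4+2t^\mathcal{A}_5$. Using (2), $f^\mathcal{A}_2=1+\sum_{i\geq 2}(i-1)t^\mathcal{A}_i=1+t^\mathcal{A}_2+2t^\mathcal{A}_3+3t^\mathcal{A}_4+4t^\mathcal{A}_5$, I would eliminate $t^\mathcal{A}_3$ between (1) and (2) to express $f^\mathcal{A}_2$ (and separately $t^\mathcal{A}_2$) as an affine function of $n$, $t^\mathcal{A}_2$, $t^\mathcal{A}_4$, $t^\mathcal{A}_5$ only. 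Concretely, $2f^\mathcal{A}_2=3(t^\mathcal{A}_2+2t^\mathcal{A}_3+3t^\mathcal{A}_4+4t^\mathcal{A}_5)-(t^\mathcal{A}_2+3t^\mathcal{A}_3+6t^\mathcal{A}_4+10t^\mathcal{A}_5)+2 = 2t^\mathcal{A}_2+3t^\mathcal{A}_4+2t^\mathcal{A}_5-\binom{n}{2}+2$ after substituting (1); some bookkeeping of this flavor will give a clean linear relation.

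For inequality (12), the idea is: since $n\geq 8$, Lemma \ref{basic comb t vector}(5) with $m(\mathcal A)\le 5$ caps $t^\mathcal{A}_2$ from above via $t^\mathcal{A}_2=3+t^\mathcal{A}_4+2t^\mathcal{A}_5$ plugged back into (1); but more sharply, I expect the needed bound comes from feeding the cited recent bound of \cite{shnu2} into the picture — that paper presumably supplies an improved lower bound on $f^\mathcal{A}_2$ (or on $t^\mathcal{A}_2+t^\mathcal{A}_3$) of the shape $f^\mathcal{A}_2\geq cn^2+\dots$ for arrangements that are not near-pencils, with $n\geq 8$ ensuring non-triviality. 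Matching the resulting linear inequality against the relation $2f^\mathcal{A}_2=2t^\mathcal{A}_2+3t^\mathcal{A}_4+2t^\mathcal{A}_5-\binom{n}{2}+2$ and solving for the combination $\frac{t^\mathcal{A}_4}{3}+t^\mathcal{A}_5$ should yield (10) with the stated constant $\frac{(n-5)^2-4}{24}$; the specific numerology (the shift by $5$, the $24$ in the denominator) is exactly what one gets by clearing denominators after this substitution, so I would reverse-engineer the precise form of the \cite{shnu2} bound from the target. Inequality (11) then follows by re-substituting: from $2f^\mathcal{A}_2=2t^\mathcal{A}_2+3t^\mathcal{A}_4+2t^\mathcal{A}_5-\binom{n}{2}+2$ and the \cite{shnu2} lower bound on $f^\mathcal{A}_2$, one isolates $t^\mathcal{A}_2$ and discards the nonnegative $t^\mathcal{A}_5$ term (keeping a $+2t^\mathcal{A}_4$), landing on $t^\mathcal{A}_2\geq\frac{n^2-46n+233}{8}+2t^\mathcal{A}_4$.

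Finally, (13) is a simple consequence of (10): since $t^\mathcal{A}_4,t^\mathcal{A}_5\geq 0$, we have $\frac{t^\mathcal{A}_4}{3}+t^\mathcal{A}_5\leq \frac{4}{3}\max(t^\mathcal{A}_4,t^\mathcal{A}_5)$, so $\max(t^\mathcal{A}_4,t^\mathcal{A}_5)\geq \frac{3}{4}\cdot\frac{(n-5)^2-4}{24}=\frac{(n-5)^2-4}{32}=\frac{n^2-10n+21}{32}$, which is exactly (12) — note $(n-5)^2-4=n^2-10n+21$. So the genuine content is (10), and the other two are formal corollaries (with (11) needing one more pass through the identities).

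\textbf{Main obstacle.} The hard part is not the algebra — which is just linear elimination among the five identities (1)–(5) specialized to $m(\mathcal A)\le 5$ — but pinning down the exact quantitative input from \cite{shnu2} and verifying that the hypotheses $n\ge 8$ and $m(\mathcal A)\le 5$ are precisely what make that external bound applicable with the right constants. In particular I would need to confirm that \cite{shnu2} gives $f^\mathcal{A}_2$ (equivalently the number of chambers of the complexified arrangement) bounded below by the quadratic whose leading behavior, after the elimination above, produces the denominators $24$ and $32$ and the linear shifts $n-5$, $-46n$; any off-by-a-constant in that citation propagates into all three displayed inequalities. Everything else is routine once that lemma is correctly invoked.
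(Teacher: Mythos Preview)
Your derivation of (12) from (10) is fine and matches the paper's case split, but the plan for (10) and (11) has two genuine errors that would derail the argument.

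First, the direction of the characteristic-polynomial input is reversed. Lemma~\ref{poly lemma} gives an \emph{upper} bound $f^\mathcal{A}_2\le\frac{(n+1)^2}{4}$, not a lower bound. The way (10) actually comes out is by sandwiching $f^\mathcal{A}_2$: from (1) and (2) one eliminates $t^\mathcal{A}_3$ to get $f^\mathcal{A}_2=\frac{n^2-n+3}{3}+\frac{t^\mathcal{A}_2}{3}-t^\mathcal{A}_4-\frac{8}{3}t^\mathcal{A}_5$, and then Melchior (5) in the form $t^\mathcal{A}_2\ge 3+t^\mathcal{A}_4+2t^\mathcal{A}_5$ turns this into the \emph{lower} bound $f^\mathcal{A}_2\ge\frac{n^2-n+6}{3}-\frac{2}{3}t^\mathcal{A}_4-2t^\mathcal{A}_5$. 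Comparing this with the upper bound $\frac{(n+1)^2}{4}$ gives exactly $\frac{t^\mathcal{A}_4}{3}+t^\mathcal{A}_5\ge\frac{n^2-10n+21}{24}$. No external result is needed for (10); it uses only Lemma~\ref{basic comb t vector} and Lemma~\ref{poly lemma}.

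Second, the role of \cite{shnu2} is not what you guess. That reference supplies a $t_k$-inequality, namely $t^\mathcal{A}_2+\frac{3}{2}t^\mathcal{A}_3\ge 8+\frac{1}{2}t^\mathcal{A}_4+\frac{5}{2}t^\mathcal{A}_5$, not a chamber-count bound. Substituting $3t^\mathcal{A}_3=\binom{n}{2}-t^\mathcal{A}_2-6t^\mathcal{A}_4-10t^\mathcal{A}_5$ turns it into an \emph{upper} bound $t^\mathcal{A}_5\le\frac{n^2-n}{30}+\frac{t^\mathcal{A}_2-16}{15}-\frac{7t^\mathcal{A}_4}{15}$. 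Inequality (11) is then obtained by chaining this upper bound against the lower bound $t^\mathcal{A}_5\ge\frac{n^2-10n+21}{24}-\frac{t^\mathcal{A}_4}{3}$ from (10) and solving for $t^\mathcal{A}_2$. So (11) is not a formal corollary of (10): it is precisely where the external input from \cite{shnu2} is consumed, and without it you cannot produce the $-46n$ coefficient.
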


\begin{proof}
By part b) of \cite[Theorem 1]{shnu2} we have $t^\mathcal{A}_2 + \frac{3 t^\mathcal{A}_3}{2} \geq 8 + \frac{t^\mathcal{A}_4}{2} + \frac{5 t^\mathcal{A}_5}{2}$. As $3 t^\mathcal{A}_3=\binom{n}{2} - t^\mathcal{A}_2 - 6 t^\mathcal{A}_4 - 10 t^\mathcal{A}_5$ we may rewrite this as $\frac{t^\mathcal{A}_2}{2} + \frac{n^2-n}{4} - 3 t^\mathcal{A}_4 - 5 t^\mathcal{A}_5 \geq 8 + \frac{t^\mathcal{A}_4}{2} + \frac{5 t^\mathcal{A}_5}{2}$. It follows $t^\mathcal{A}_5 \leq \frac{n^2-n}{30} + \frac{t^\mathcal{A}_2}{15} - \frac{7 t^\mathcal{A}_4}{15} - \frac{16}{15}$. By Lemma \ref{poly lemma} we have $(n+1)^2 \geq 4 f^\mathcal{A}_2$. Equation (2) in Lemma \ref{basic comb t vector} yields $f^\mathcal{A}_2=1 + f^\mathcal{A}_1- f^\mathcal{A}_0=1 + t^\mathcal{A}_2 + 2 t^\mathcal{A}_3 + 3 t^\mathcal{A}_4 + 4 t^\mathcal{A}_5=\frac{n^2 - n + 3}{3} + \frac{t^\mathcal{A}_2}{3} - t^\mathcal{A}_4 - \frac{8 t^\mathcal{A}_5}{3} $. Now we use inequality (5) from Lemma \ref{basic comb t vector} to conclude that the estimate \begin{align*}
\frac{n^2}{4} + \frac{n}{2} + \frac{1}{4} \geq f^\mathcal{A}_2 \geq \frac{n^2-n+6}{3} - \frac{2 t^\mathcal{A}_4}{3} - 2 t^\mathcal{A}_5
\end{align*} holds. From this we deduce that $t^\mathcal{A}_5 \geq \frac{n^2}{24} - \frac{5 n}{12} + \frac{7}{8} - \frac{t^\mathcal{A}_4}{3}$, proving (10).

 We have thus established the following chain of inequalities: \begin{align*}
\frac{n^2 - 10n}{24} + \frac{7}{8} - \frac{t^\mathcal{A}_4}{3} \leq t^\mathcal{A}_5   \leq \frac{n^2-n}{30} + \frac{t^\mathcal{A}_2 - 16}{15} - \frac{7 t^\mathcal{A}_4}{15}.
\end{align*} This implies (11). In order to prove (12) we consider two cases. First assume that $t^\mathcal{A}_4 \leq t^\mathcal{A}_5$. Then by the above we know that $t^\mathcal{A}_5 \geq \frac{n^2 - 10n}{24} + \frac{7}{8} - \frac{t^\mathcal{A}_4}{3} \geq \frac{n^2 - 10n}{24} + \frac{7}{8} - \frac{t^\mathcal{A}_5}{3}$. From this we conclude that $t^\mathcal{A}_5 \geq \frac{n^2-10n+21}{32}$. The case $t^\mathcal{A}_4 \geq t^\mathcal{A}_5$ is dealt with similarly. This finishes the proof.
\end{proof}

With the last lemma we are ready to prove the main result of this subsection.

\begin{theorem} \label{no simp theo}
a) If $m(\mathcal{A}) \leq 4$, then $n \leq 19$. \\
b) If $m(\mathcal{A}) \leq 5$, then $n \leq 185$. 
\end{theorem}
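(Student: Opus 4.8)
The plan is to combine the polynomial bound from Lemma \ref{poly lemma}, which says $f^\mathcal{A}_2 \leq \frac{(n+1)^2}{4}$, with lower bounds on $f^\mathcal{A}_2$ coming from Melchior's inequality (5) and the multiplicity hypothesis. For part a), assume $m(\mathcal{A}) \leq 4$, so $t^\mathcal{A}_i = 0$ for $i \geq 5$. Inequality (5) from Lemma \ref{basic comb t vector} gives $t^\mathcal{A}_2 \geq 3 + t^\mathcal{A}_4$, and equation (2) gives $f^\mathcal{A}_2 = 1 + t^\mathcal{A}_2 + 2 t^\mathcal{A}_3 + 3 t^\mathcal{A}_4$; using equation (1) to eliminate $t^\mathcal{A}_3$ via $3 t^\mathcal{A}_3 = \binom{n}{2} - t^\mathcal{A}_2 - 6 t^\mathcal{A}_4$ yields $f^\mathcal{A}_2 = \frac{n^2-n+3}{3} + \frac{t^\mathcal{A}_2}{3} - t^\mathcal{A}_4$. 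Since $t^\mathcal{A}_4 \leq t^\mathcal{A}_2 - 3$, I get $f^\mathcal{A}_2 \geq \frac{n^2-n+3}{3} + \frac{t^\mathcal{A}_2}{3} - (t^\mathcal{A}_2 - 3)$, and then I need a further lower bound on $t^\mathcal{A}_2$ or simply to push $t^\mathcal{A}_4$ as small as its own constraints allow. The cleanest route is: feed the inequality $f^\mathcal{A}_2 \geq \frac{n^2-n+6}{3} - \frac{2 t^\mathcal{A}_4}{3}$ (the $m(\mathcal A)\le 4$ specialization of the chain appearing in the proof of Lemma \ref{not simp}) together with an upper bound on $t^\mathcal{A}_4$ — which for $m(\mathcal A)\le 4$ follows from (5) and (1) combined — into $f^\mathcal{A}_2 \leq \frac{(n+1)^2}{4}$, and solve the resulting quadratic inequality in $n$ to obtain $n \leq 19$.

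For part b), assume $m(\mathcal{A}) \leq 5$, and take $n \geq 8$ (the cases $n \leq 7$ are trivial). Now I can invoke Lemma \ref{not simp} directly: inequality (12) gives $\max(t^\mathcal{A}_4, t^\mathcal{A}_5) \geq \frac{n^2-10n+21}{32}$. The idea is that a large value of $t^\mathcal{A}_4$ or $t^\mathcal{A}_5$ forces $f^\mathcal{A}_2$ to be large through equation (2) — each weight-$i$ vertex contributes $i-1$ to $f^\mathcal{A}_2 - 1$ — while Lemma \ref{poly lemma} caps $f^\mathcal{A}_2$ at $\frac{(n+1)^2}{4}$. More precisely, using $f^\mathcal{A}_2 = 1 + t^\mathcal{A}_2 + 2 t^\mathcal{A}_3 + 3 t^\mathcal{A}_4 + 4 t^\mathcal{A}_5$ together with inequality (11), $t^\mathcal{A}_2 \geq \frac{n^2 - 46n + 233}{8} + 2 t^\mathcal{A}_4$, and inequality (10), $\frac{t^\mathcal{A}_4}{3} + t^\mathcal{A}_5 \geq \frac{(n-5)^2 - 4}{24}$, I can bound $f^\mathcal{A}_2$ from below by a quadratic in $n$ (the $t^\mathcal{A}_4$, $t^\mathcal{A}_5$ dependence should either cancel favorably or be absorbed by nonnegativity). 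Comparing this lower bound with $\frac{(n+1)^2}{4}$ produces a quadratic inequality in $n$ whose largest integer solution is $185$.

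The main obstacle I anticipate is bookkeeping: I must combine several inequalities — (5), equations (1) and (2), the inequalities (10), (11), (12) from Lemma \ref{not simp}, and the cap from Lemma \ref{poly lemma} — in a way that the terms involving $t^\mathcal{A}_4$ and $t^\mathcal{A}_5$ do not spoil the estimate. The delicate point is that $t^\mathcal{A}_2$ and $t^\mathcal{A}_5$ both appear with signs that could work against me in $f^\mathcal{A}_2 = \frac{n^2-n+3}{3} + \frac{t^\mathcal{A}_2}{3} - t^\mathcal{A}_4 - \frac{8 t^\mathcal{A}_5}{3}$, so I will likely want to use a different linear combination of the relations — perhaps going back to Melchior's inequality (5) in the form $f^\mathcal{A}_2 \geq \frac{n^2 - n + 6}{3} - \frac{2 t^\mathcal{A}_4}{3} - 2 t^\mathcal{A}_5$ as in Lemma \ref{not simp} — and then controlling $t^\mathcal{A}_4, t^\mathcal{A}_5$ via (10) and (11). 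The final numerical step is then just solving one quadratic in each part; getting the constants to land exactly on $19$ and $185$ is a matter of careful arithmetic rather than any conceptual difficulty.
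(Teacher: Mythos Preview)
Your plan is correct and, for part~b), essentially identical to the paper's: both combine the cap $f^\mathcal{A}_2\le\frac{(n+1)^2}{4}$ from Lemma~\ref{poly lemma} with inequalities (10) and (11) of Lemma~\ref{not simp}; whether you first extract an upper bound on $t^\mathcal{A}_2$ and then confront it with (11), or instead plug (11) and (10) into $f^\mathcal{A}_2=1+t^\mathcal{A}_2+2t^\mathcal{A}_3+3t^\mathcal{A}_4+4t^\mathcal{A}_5$ to bound $f^\mathcal{A}_2$ from below, you end up with the same quadratic $n^2-190n+801\le 0$, whose larger root is just below $186$. (Inequality (12) is not needed here.)

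For part~a) your approach differs slightly from the paper's. The paper simply invokes the external bound $f^\mathcal{A}_2\ge\frac{2n^2-2n+4m(\mathcal{A})}{m(\mathcal{A})+3}$ from \cite{shnu} and specializes to $m(\mathcal{A})=4$. Your route---use Melchior to get $f^\mathcal{A}_2\ge\frac{n^2-n+6}{3}-\frac{2t^\mathcal{A}_4}{3}$, then bound $t^\mathcal{A}_4$ via (1), (5) and $t^\mathcal{A}_3\ge 0$ to obtain $t^\mathcal{A}_4\le\frac{n^2-n-6}{14}$---yields exactly $f^\mathcal{A}_2\ge\frac{2n^2-2n+16}{7}$, i.e.\ you rederive the $m=4$ case of the cited result from scratch. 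This is a genuine (if small) gain in self-containedness, and the resulting inequality $n^2-22n+57\le 0$ gives $n\le 19$ on the nose.
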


\begin{proof}
a) Using \cite[Theorem 1]{shnu}, we obtain $$\frac{(n+1)^2}{4} \geq f^\mathcal{A}_2 \geq \frac{2 n^2-2n+4m(\mathcal{A})}{m(\mathcal{A})+3}.$$ For $m(\mathcal{A})=3$, this gives $\frac{(n+1)^2}{4} \geq \frac{2 n^2-2n+4 \cdot 3}{3+3}$. We conclude $3 \leq n \leq 7$. On the other hand, for $m(\mathcal{A})=4$ we obtain $\frac{(n+1)^2}{4} \geq \frac{2 n^2-2n+4 \cdot 4}{4+3}$. This yields $3 \leq n \leq 19$, completing the proof.
\\
b) We may assume that $n>7$. We have $\frac{(n+1)^2}{4} \geq f^\mathcal{A}_2$. Together with the first estimate in Lemma \ref{not simp} this yields \begin{align*}
\frac{(n+1)^2}{4} \geq f^\mathcal{A}_2 \geq 1 + t^\mathcal{A}_2 + 2 t^\mathcal{A}_3 + 4 (\frac{t^\mathcal{A}_4}{3}  +  t^\mathcal{A}_5) \geq 1 + t^\mathcal{A}_2 +  \frac{n^2-10n+21}{6}.
\end{align*} We conclude $\frac{n^2 + 26n - 51}{12} \geq t^\mathcal{A}_2$. But now the second estimate in Lemma \ref{not simp} gives us $\frac{n^2 + 26n - 51}{12}  \geq t^\mathcal{A}_2 \geq \frac{n^2-46n+233}{8}$. This implies $n \leq 185$, finishing the proof.
\end{proof}

\begin{remark}
If for $i \geq 6$ the numbers $t^\mathcal{A}_i$ do not grow too fast, then we can also give an upper bound for $n$. More precisely, for each $i \geq 6$ define $\Delta_i:=\frac{i^2-3i-10}{2}$ and assume that $t^\mathcal{A}_i \leq \alpha_i$, where $\alpha_i \in \mathbb{R}_{\geq 0}$. Then we have the estimate $n \leq 95 + 2 \sqrt{2056 + 63 \sum_{i\geq 6} \Delta_i \alpha_i}$. In particular, if $\alpha_i=0$ for all $i \geq 6$ then we get back the result from Theorem \ref{no simp theo}, part b). 
\end{remark}

\end{subsection}

\begin{subsection}{A lower bound for $t^\mathcal{A}_2$ and the Dirac Motzkin Conjecture}
In this subsection we study the so called \textit{Dirac Motzkin Conjecture}. In its classical form, it asserts that for a nontrivial arrangement $\mathcal{A}$ of $n$ \textit{straight} lines in $\mathbb{P}^2(\mathbb{R})$ one always has $$t^\mathcal{A}_2 \geq \Bigl \lfloor \frac{n}{2} \Bigr \rfloor.$$ This has been a famous open problem for a long time until in the paper \cite{greentao} said conjecture has been shown to be a theorem at least for \textit{sufficiently large} arrangements. However, the lower bounds given in the paper concerning the ``sufficiently large'' part are of double exponential order.  

We study this conjecture in the context of real \textit{pseudoline} arrangements whose characteristic polynomials have only real roots. 
This is motivated by the fact that there is an infinite family of such arrangements (denoted $\mathcal{R}(1)$ in \cite{Gruenbaum}) with $t^\mathcal{A}_2= \frac{|\mathcal{A}|}{2}$ for every $\mathcal{A}$ in the family. We remark that these are all linear simplicial arrangements. In the paper \cite{greentao}, the dual point configurations corresponding to arrangements in the family $\mathcal{R}(1)$ are the so called ``B\"or\"ocky examples'', denoted by $X_{2m}$ for $m \in \mathbb{N}_{\geq 3}$. Moreover, all arrangements from the family $\mathcal{R}(1)$ are \textit{(inductively) free}.

 By Lemma \ref{near pencil lemma}, every chamber of a nontrivial simplicial arrangement contains at most one double point. Therefore, the simpliciality of the line arrangements corresponding to the B\"or\"ocky examples may not seem surprising: simplicial arrangements could in general be expected to yield ``corner cases'' for the Dirac Motzkin Conjecture. 
 
 Besides the facts mentioned above, this point of view is supported by the observation that, apparently, the only known examples with $t^\mathcal{A}_2 < \frac{n}{2}$ are the simplicial arrangements $A(7,1), A(13,4)$. We observe that the characteristic polynomial of the first arrangement splits over $\mathbb{R}$ while for the second arrangement this is not the case. Moreover, the first arrangement is known as the ``Kelly-Moser configuration'' while the second one is known as the ``Crowe-McKee configuration'' (see \cite{kelly_moser}, \cite{crowe_mckee}).  
 
\begin{remark} \label{inf series bemerkung}
We note that (up to combinatorial isomorphism) the arrangements from the infinite family $\mathcal{R}(1)$ are completely characterized by their $t$-vectors. Indeed, the smallest arrangement in the family $\mathcal{R}(1)$ is the arrangement $\mathcal{A^\prime}:= A(6,1)$ which is characterized by the vector $t^\mathcal{A^\prime}=(3,4)$. Similarly, if $|\mathcal{A}| \geq 8$ then $\mathcal{A}$ belongs to $\mathcal{R}(1)$ if and only if there exists some $m \in \mathbb{N}_{\geq 4}$ such that $|\mathcal{A}|=2m$ and $t^\mathcal{A}_2=m, t^\mathcal{A}_3=\frac{m^2-m}{2}, t^\mathcal{A}_m=1$ while $t^\mathcal{A}_i=0$ for every $i \notin \lbrace 2,3,m \rbrace$. In the papers \cite{Cuntz}, \cite{Gruenbaum} the corresponding isomorphism class is denoted by $A(2m,1)$. The arrangements from the family $\mathcal{R}(1)$ also appear in the paper \cite{cuntz_muecksch}, which gives a classification of \textit{supersolvable} simplicial hyperplane arrangements (in arbitrary rank). Note that for linear line arrangements, supersolvability of $\mathcal{A}$ amounts to the fact that there is a single vertex of $\mathcal{A}$ which is connected (via lines of $\mathcal{A}$) to every other vertex.

Note also that for $n \geq 2$, one may add a suitable line to the arrangement $A(4n,1)$ to obtain a new simplicial arrangement denoted by $A(4n+1,1)$. The arrangements of type $A(4n+1,1)$ with $n \geq 2$ constitute another infinite family, which is denoted by $\mathcal{R}(2)$. Again, we refer to \cite{Gruenbaum} for more details. 
\end{remark} 
 
 Motivated by the above observations, we now state and prove the main result of this subsection, which provides a lower bound for $t^\mathcal{A}_2$. In the following, this will allow us to resolve the Dirac Motzkin Conjecture completely in the setup described above.
 
\begin{theorem} \label{t_2 quadratisch fuer faktorisierende arrangements}
If $n \geq 4$ then $t^\mathcal{A}_2 \geq 3 + \frac{(n-5)^2-4}{4m(\mathcal{A})-8}$.  
\end{theorem}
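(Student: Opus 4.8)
The plan is to combine the quadratic upper bound $f^\mathcal{A}_2 \leq \frac{(n+1)^2}{4}$ coming from Lemma \ref{poly lemma} with the combinatorial identities of Lemma \ref{basic comb t vector} in order to isolate $t^\mathcal{A}_2$ from below. Writing $m:=m(\mathcal{A})$ for the multiplicity, so that $t^\mathcal{A}_i=0$ for $i>m$, the idea is to eliminate $t^\mathcal{A}_3$ between equation (1) and equation (2) (or the equivalent $f^\mathcal{A}_2 = 1 + \sum_{i\geq 2}(i-1)t^\mathcal{A}_i$), keeping track of the higher terms $t^\mathcal{A}_i$ for $4 \leq i \leq m$ with coefficients that one can bound uniformly in terms of $m$. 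Concretely, multiplying (2) by a suitable constant and subtracting a multiple of (1), each coefficient of $t^\mathcal{A}_i$ for $i\geq 4$ should come out with a definite sign, which lets us discard those terms and retain only an inequality linking $t^\mathcal{A}_2$, $f^\mathcal{A}_2$ and $n$.

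First I would record the consequence of (1) and (2): from (1) we have $t^\mathcal{A}_2 + 3 t^\mathcal{A}_3 + \sum_{i\geq 4}\binom{i}{2}t^\mathcal{A}_i = \binom{n}{2}$, and from (2), $t^\mathcal{A}_2 + 2 t^\mathcal{A}_3 + \sum_{i\geq 4}(i-1)t^\mathcal{A}_i = f^\mathcal{A}_2 - 1$. Eliminating $t^\mathcal{A}_3$ by taking $2\cdot(1) - 3\cdot(2)$ gives $-t^\mathcal{A}_2 + \sum_{i\geq 4}\bigl(2\binom{i}{2} - 3(i-1)\bigr)t^\mathcal{A}_i = 2\binom{n}{2} - 3(f^\mathcal{A}_2-1)$, i.e. $\sum_{i\geq 4}(i-1)(i-3)t^\mathcal{A}_i = t^\mathcal{A}_2 + n^2 - n - 3 f^\mathcal{A}_2 + 3$. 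Since each $i$ ranges in $4\leq i\leq m$, we have $(i-1)(i-3)\leq (m-1)(m-3)$, while on the other hand $\sum_{i\geq 4}(i-3)t^\mathcal{A}_i = t^\mathcal{A}_2 - 3$ is exactly Melchior's inequality (5). Next I would use $(i-1)(i-3)\leq (m-1)(m-3)\cdot\frac{i-3}{m-3}$ — valid because $\frac{i-1}{i-3}$ is decreasing — no, more simply just bound $(i-1)(i-3)\leq (m-1)(i-3)$ for $i\leq m$, so that $\sum_{i\geq 4}(i-1)(i-3)t^\mathcal{A}_i \leq (m-1)\sum_{i\geq 4}(i-3)t^\mathcal{A}_i = (m-1)(t^\mathcal{A}_2-3)$ by (5).

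Feeding this in yields $t^\mathcal{A}_2 + n^2 - n - 3 f^\mathcal{A}_2 + 3 \leq (m-1)(t^\mathcal{A}_2 - 3)$, hence $(m-2)t^\mathcal{A}_2 \geq n^2 - n + 3 - 3f^\mathcal{A}_2 + 3(m-1) = n^2 - n + 3m - 3f^\mathcal{A}_2$. Now I would substitute the upper bound $f^\mathcal{A}_2 \leq \frac{(n+1)^2}{4}$ from Lemma \ref{poly lemma}, obtaining $(m-2)t^\mathcal{A}_2 \geq n^2 - n + 3m - \tfrac{3}{4}(n+1)^2 = \tfrac{1}{4}n^2 - \tfrac{5}{2}n - \tfrac{3}{4} + 3m = \tfrac{1}{4}\bigl((n-5)^2 - 28\bigr) + 3m$. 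Dividing by $m-2$ (which is positive since $m\geq 3$ for a non-trivial arrangement — here one should note $m\geq 4$ so that $2m-4\geq 4$, or handle small $m$ directly) and rearranging gives $t^\mathcal{A}_2 \geq 3 + \frac{(n-5)^2 - 4}{4m-8}$ after simplification, since $\frac{3m}{m-2} - 7 = \frac{3m - 7(m-2)}{m-2} = \frac{14 - 4m}{m-2}$ and $\frac{(n-5)^2-28}{4(m-2)} + \frac{14-4m}{m-2} = \frac{(n-5)^2 - 28 + 56 - 16m}{4(m-2)}$... I would double-check the bookkeeping here, but the shape is forced: constant term $3$ plus a term quadratic in $n$ over $4m-8$.

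The main obstacle I anticipate is the uniform control of the higher-multiplicity contributions: the naive bound $(i-1)(i-3)\leq (m-1)(i-3)$ is clean but may be slightly lossy, and getting the exact constants $3$ and $(n-5)^2-4$ in the statement will require choosing the sharp comparison coefficient. It is quite possible the intended argument instead eliminates $t^\mathcal{A}_3$ differently or uses the refined inequalities from Lemma \ref{t2+t3} and Proposition \ref{inf series prop}; in particular Proposition \ref{inf series prop} guarantees $m\leq \lfloor n/2\rfloor$, which together with the above bound is what ultimately makes $t^\mathcal{A}_2$ grow, and the infinite families $\mathcal{R}(1)$ (where $m=n/2$) should be exactly the equality cases one checks at the end. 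I would therefore verify the final algebra against the arrangement $A(6,1)$ with $t^\mathcal{A}=(3,4)$, $n=6$, $m=3$: the formula predicts $t^\mathcal{A}_2\geq 3 + \frac{1-4}{4} = 3 - \tfrac34$, consistent with $t^\mathcal{A}_2 = 3$, and against larger members of $\mathcal{R}(1)$ to pin down the constants.
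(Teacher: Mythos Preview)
Your approach is essentially the paper's: both eliminate $t^\mathcal{A}_3$ between equations (1) and (2), bound the resulting sum $\sum_{i\geq 4}(i-1)(i-3)t^\mathcal{A}_i$ (the paper first converts this to $\sum_{i\geq 4}(i-2)(i-3)t^\mathcal{A}_i$, but that is cosmetic) via Melchior's inequality and the factor $i\leq m$, and then insert $f^\mathcal{A}_2\leq\frac{(n+1)^2}{4}$ from Lemma~\ref{poly lemma}. Two minor fixes: Melchior gives $\sum_{i\geq 4}(i-3)t^\mathcal{A}_i\leq t^\mathcal{A}_2-3$, not equality (you use the correct direction anyway); and to divide by $m-2$ you only need $m\geq 3$, which the paper secures by observing that $m=2$ would force $f^\mathcal{A}_2=1+\binom{n}{2}>\frac{(n+1)^2}{4}$ for $n\geq 4$ --- your bookkeeping, once completed (use $\tfrac{3m}{m-2}=3+\tfrac{6}{m-2}$), indeed yields exactly $3+\frac{(n-5)^2-4}{4m-8}$.
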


\begin{proof}
By Lemma \ref{basic comb t vector}, equation (1) one has $2 t^\mathcal{A}_3= \frac{n^2-n}{3} - \frac{2 t^\mathcal{A}_2}{3} - \sum_{i \geq 4} \frac{i^2-i}{3} t^\mathcal{A}_i$. Combining this with equation (2) and inequality (5) from said lemma and remembering Lemma \ref{poly lemma}, we obtain \begin{align*}
\frac{(n+1)^2}{4} \geq f^\mathcal{A}_2&=\frac{n^2-n+3}{3} + \frac{t^\mathcal{A}_2}{3} - \sum_{i \geq 4} \frac{i^2-4i+3}{3} t^\mathcal{A}_i \\
&\geq \frac{n^2-n+3}{3} + 1 + \sum_{i \geq 4} \frac{i-3}{3}t^\mathcal{A}_i - \sum_{i \geq 4} \frac{i^2-4i+3}{3} t^\mathcal{A}_i \\
&=\frac{n^2-n+6}{3} - \frac{1}{3} \sum_{i \geq 4} (i-2)(i-3) t^\mathcal{A}_i. 
\end{align*}

We conclude that $\sum_{i \geq 4} (i-2)(i-3) t^\mathcal{A}_i \geq \frac{n^2-10n+21}{4}=\frac{(n-5)^2}{4} - 1$. By definition, we have $i-2 \leq m(\mathcal{A}) - 2$ for every $i$ such that $t^\mathcal{A}_i > 0$. Using this, we obtain the following chain of inequalities: $$(m(\mathcal{A})-2) (t^\mathcal{A}_2 - 3)  \geq \sum_{i \geq 4} (i-2)(i-3) t^\mathcal{A}_i \geq \frac{(n-5)^2}{4} - 1.$$ Observe that $m(\mathcal{A} \geq 3$ because $n \geq 4$: indeed, if $m(\mathcal{A})=2$ then $f^\mathcal{A}_2=1+t^\mathcal{A}_2=1 + \binom{n}{2}>\frac{(n+1)^2}{4}$ for every $n \geq 4$. We conclude $t^\mathcal{A}_2 \geq 3 + \frac{(n-5)^2-4}{4m(\mathcal{A})-8}$, finishing the proof.  
\end{proof} 

\begin{remark} \label{lower bound t3 remark}
If $\mathcal{A}$ is simplicial, then one also has the following inequalities: \\
$t^\mathcal{A}_3 + \frac{2 t^\mathcal{A}_4+t^\mathcal{A}_5}{m(\mathcal{A})} \geq 4 + \frac{(n-5)^2-4}{4m(\mathcal{A})} + \frac{\sum_{i \geq 7} (i-6)t^\mathcal{A}_i}{m(\mathcal{A})}\geq 4 + \frac{(n-5)^2-4}{4m(\mathcal{A})}.$ This will turn out to be useful in the following subsection.
\end{remark}

The last theorem is enough to resolve the Dirac Motzkin Conjecture for arrangements whose characteristic polynomials split over $\mathbb{R}$:

\begin{theorem} \label{dirac theorem}
The following statements hold:\\
a) We have $t^\mathcal{A}_2 \geq \lfloor \frac{n}{2} \rfloor$. \\
b) If $t^\mathcal{A}_2=\lfloor \frac{n}{2} \rfloor$, then $\mathcal{A}$ is simplicial. More precisely: if $n$ is even, then $\mathcal{A}$ belongs to the infinite family $\mathcal{R}(1)$. If $n$ is odd, then $\mathcal{A}$ is the Kelly-Moser example.
\end{theorem}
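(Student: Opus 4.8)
The plan is to reduce both statements to Theorem~\ref{t_2 quadratisch fuer faktorisierende arrangements} together with Proposition~\ref{inf series prop} (which gives $m(\mathcal{A})\le\lfloor n/2\rfloor$), and then analyse the equality cases via the formulas in Lemma~\ref{basic comb t vector} and Lemma~\ref{poly lemma}. For part~a), I would first dispose of small $n$ (say $n\le 7$, or wherever the catalogue makes this immediate) by hand or by a direct appeal to known small arrangements, and assume $n\ge 8$. Write $m:=m(\mathcal{A})$. Theorem~\ref{t_2 quadratisch fuer faktorisierende arrangements} gives
\[
t^\mathcal{A}_2 \;\ge\; 3 + \frac{(n-5)^2-4}{4m-8}.
\]
Since $m\le \lfloor n/2\rfloor$ by Proposition~\ref{inf series prop}, the denominator satisfies $4m-8\le 2n-8$, so
\[
t^\mathcal{A}_2 \;\ge\; 3 + \frac{(n-5)^2-4}{2n-8} \;=\; 3 + \frac{n^2-10n+21}{2n-8}.
\]
A one-line polynomial division shows $\frac{n^2-10n+21}{2n-8}\ge \frac{n}{2} - 3$ for all $n\ge 8$ (the difference is $\frac{(\,\text{something nonnegative}\,)}{2n-8}$), hence $t^\mathcal{A}_2\ge \lfloor n/2\rfloor$. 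This is the whole of part~a); the only care needed is to verify the small-$n$ range and to make the final inequality genuinely sharp (it should be tight exactly along the family $\mathcal{R}(1)$, which forces the bookkeeping to be exact, not wasteful).

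For part~b) I would run the chain of inequalities above under the hypothesis $t^\mathcal{A}_2=\lfloor n/2\rfloor$ and track which inequalities must be equalities. Equality in $t^\mathcal{A}_2\ge \lfloor n/2\rfloor$ forces equality in Theorem~\ref{t_2 quadratisch fuer faktorisierende arrangements}, which in turn forces: (i) $m(\mathcal{A})=\lfloor n/2\rfloor$; (ii) equality in Melchior's inequality (5), i.e. $\mathcal{A}$ is simplicial by Remark~\ref{near pencil remark}; and (iii) $t^\mathcal{A}_i=0$ for all $i$ with $4\le i\le m-1$, since these $i$ contribute with positive coefficient $(i-2)(i-3)$ strictly larger than needed unless absent — more precisely, equality in $(m-2)(t^\mathcal{A}_2-3)\ge\sum_{i\ge4}(i-2)(i-3)t^\mathcal{A}_i$ forces every nonzero $t^\mathcal{A}_i$ with $i\ge4$ to have $i=m$. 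Also equality in Lemma~\ref{poly lemma}'s bound $f^\mathcal{A}_2\le (n+1)^2/4$ forces $m=(n+1)^2-4f^\mathcal{A}_2=0$, hence $\sqrt{m}=0$, which by Lemma~\ref{poly lemma} means the discriminant vanishes — but wait, that would force $n$ odd; so I should instead be careful: equality in $f^\mathcal{A}_2\le(n+1)^2/4$ happens iff $(n+1)^2$ is a perfect square multiple, i.e. $n$ odd, and then this is exactly the Kelly-Moser branch. For $n$ even the inequality $(n+1)^2/4\ge f^\mathcal{A}_2$ has a gap of exactly $1/4$ which must be absorbed elsewhere; reconciling this slack with the equality elsewhere is the delicate point and pins down the even case.

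So the structure of part~b) is a case split on the parity of $n$. If $n$ is even, the analysis leaves $t^\mathcal{A}_2 = n/2$, $t^\mathcal{A}_i=0$ for $4\le i\le m-1$, $\mathcal{A}$ simplicial, and $m=n/2$; then equation~(1) of Lemma~\ref{basic comb t vector} together with $t^\mathcal{A}_m\le 1$ (Proposition~\ref{inf series prop}) forces $t^\mathcal{A}_m=1$ and $t^\mathcal{A}_3=\frac{m^2-m}{2}$ and all other $t^\mathcal{A}_i=0$, which is precisely the $t$-vector characterising $\mathcal{R}(1)$ recorded in Remark~\ref{inf series bemerkung}; invoking that remark (which states these arrangements are determined by their $t$-vectors) finishes it. If $n$ is odd, I expect the bound to force $n=7$ exactly: the slack in the odd case is just barely enough only at the smallest value, and a short finite check (using the $n\le 19$-type bounds from Theorem~\ref{no simp theo}a together with simpliciality) isolates $A(7,1)$, the Kelly-Moser example. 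The main obstacle will be the odd case: showing that $t^\mathcal{A}_2=\lfloor n/2\rfloor=(n-1)/2$ cannot occur for odd $n\ge 9$ requires squeezing the inequalities tightly enough — one cannot afford the $\lfloor\,\rfloor$-rounding slack plus the discriminant slack simultaneously — and I would handle this by keeping the $\lfloor n/2\rfloor$ honest throughout (i.e. working with $2t^\mathcal{A}_2\ge n-1$) rather than passing to real-number estimates prematurely, so that the residual inequality becomes an integer constraint that fails for odd $n\ge 9$.
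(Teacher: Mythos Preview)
Your main gap is the appeal to Proposition~\ref{inf series prop} in part~a): that proposition lives in Section~3, where $\mathcal{A}$ is assumed simplicial throughout, and its proof genuinely uses that each chamber adjacent to a high-weight vertex is a triangle. In Section~4 the arrangement is only assumed to have $\chi(\mathcal{A},t)$ splitting over $\mathbb{R}$, so you cannot conclude $m(\mathcal{A})\le\lfloor n/2\rfloor$ this way; indeed for non-simplicial $\mathcal{A}$ one may well have $m(\mathcal{A})>n/2$. The paper handles this by a case split for $n\ge 8$: if $m(\mathcal{A})\ge n/2$ then Melchior alone gives $t^\mathcal{A}_2\ge 3+(m(\mathcal{A})-3)\ge n/2$; if $m(\mathcal{A})<n/2$ then $m(\mathcal{A})\le(n-1)/2$, and Theorem~\ref{t_2 quadratisch fuer faktorisierende arrangements} with denominator $2n-10$ (not your $2n-8$) yields $t^\mathcal{A}_2\ge\tfrac{1}{2}\bigl(n+1-\tfrac{4}{n-5}\bigr)\ge n/2$ for $n\ge 9$, with $n=8$ handled by integrality. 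Your stated inequality $\frac{n^2-10n+21}{2n-8}\ge\frac{n}{2}-3$ is in fact false (the difference is $-\tfrac{3}{2(n-4)}$), and your resulting bound is \emph{not} tight at $\mathcal{R}(1)$, contrary to your parenthetical claim.

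This error propagates into part~b): your equality analysis starts by forcing $m(\mathcal{A})=\lfloor n/2\rfloor$ via the invalid step, so the structure collapses. The paper instead reuses the case split. For odd $n$, $t^\mathcal{A}_2=(n-1)/2<n/2$ combined with the fact (established in part~a)) that $t^\mathcal{A}_2\ge n/2$ for all $n\ge 8$ immediately forces $n\le 7$, hence $n=7$. For even $n\ge 8$, both $m(\mathcal{A})>n/2$ (strict Melchior) and $m(\mathcal{A})<n/2$ (rerunning the Theorem~\ref{t_2 quadratisch fuer faktorisierende arrangements} bound with $m\le(n-2)/2$) are ruled out, leaving $m(\mathcal{A})=n/2$; equality in~(5) then gives simpliciality and $t^\mathcal{A}_{n/2}=1$, and~(1) pins down $t^\mathcal{A}_3$, after which Remark~\ref{inf series bemerkung} identifies $\mathcal{R}(1)$. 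Finally, note your discriminant discussion conflates the multiplicity $m(\mathcal{A})$ with the $m$ of Lemma~\ref{poly lemma}; for even $n$ the bound $f^\mathcal{A}_2\le(n+1)^2/4$ is never an equality, so that line of attack cannot work there.
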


\begin{proof}
a) We consider three cases, corresponding to size and multiplicity of the given arrangement: \\
Case i): Assume that $3 \leq n \leq 7$.

 Then by relation (5) from Lemma \ref{basic comb t vector} we have $t^\mathcal{A}_2 \geq 3 \geq \lfloor \frac{n}{2} \rfloor$. \\
Case ii): Assume that $n \geq 8$ and $m(\mathcal{A}) \geq \frac{n}{2}$. 

Then there exists $j \geq \frac{n}{2} \geq 4$ such that $t^\mathcal{A}_j>0$ and relation (5) from Lemma \ref{basic comb t vector} yields $t^\mathcal{A}_2 \geq 3 + \sum_{i \geq 4} (i-3) t^\mathcal{A}_i \geq 3 + (j - 3) t^\mathcal{A}_j  \geq 3 + \frac{n}{2} - 3 = \frac{n}{2} \geq  \lfloor \frac{n}{2} \rfloor.$ \\
Case iii): Assume that $n \geq 8$ and $m(\mathcal{A}) < \frac{n}{2}$. 

By Theorem \ref{t_2 quadratisch fuer faktorisierende arrangements}, we have $$t^\mathcal{A}_2 \geq 3 + \frac{(n-5)^2-4}{4m(\mathcal{A})-8} \geq 3 + \frac{(n-5)^2-4}{4\frac{n-1}{2}-8}=\frac{1}{2} \left( n + 1 - \frac{4}{n-5} \right).$$ Clearly, for $n \geq 9$ one has $t^\mathcal{A}_2 \geq \frac{1}{2} (n + 1 - \frac{4}{n-5}) \geq \frac{n}{2} \geq  \lfloor \frac{n}{2} \rfloor$. For $n=8$ we obtain $\frac{23}{6} \leq t^\mathcal{A}_2 \in \mathbb{N} $, which implies $t^\mathcal{A}_2 \geq 4=\frac{n}{2} \geq \lfloor \frac{n}{2} \rfloor $. \\
b) Let $\mathcal{A}$ be an arrangement such that $t^\mathcal{A}_2 = \lfloor \frac{n}{2} \rfloor$. We consider two cases, corresponding to the parity of $n$:\\
Case i): Assume that $n$ is odd.

Then we have $t^\mathcal{A}_2=\lfloor \frac{n}{2} \rfloor=\frac{n-1}{2} < \frac{n}{2}$. The proof of part a) shows that we necessarily have $3 \leq n \leq 7$. Indeed, for $n \geq 8$ one always has $t^\mathcal{A}_2 \geq \frac{n}{2}$. Moreover, for $3 \leq n \leq 6$ we also have $t^\mathcal{A}_2 \geq 3 \geq \frac{n}{2}$. We conclude that $n=7$ and $t^\mathcal{A}_2=3$. In particular, $\mathcal{A}$ is simplicial as we have equality in relation (5) from Lemma \ref{basic comb t vector}. Using \cite{Cuntz}, we conclude that $\mathcal{A}$ is the Kelly-Moser example. 
Case ii): Assume that $n$ is even.

Then we have $t^\mathcal{A}_2=\frac{n}{2}$. First assume that $n \geq 8$.
We show that the multiplicity of $\mathcal{A}$ is precisely $\frac{n}{2}$. If not, then $m(\mathcal{A}) > \frac{n}{2}$ or $m(\mathcal{A})< \frac{n}{2}$. 

So assume that $m(\mathcal{A})> \frac{n}{2}$. Then $t^\mathcal{A}_2 \geq 3 +  \sum_{i \geq 4} (i-3)t^\mathcal{A}_i >3+(\frac{n}{2}-3)=\frac{n}{2}$, a contradiction. 

Now assume that $m(\mathcal{A}) < \frac{n}{2}$. Then $i-2 \leq m(\mathcal{A})-2 \leq \frac{n-2}{2} - 2=\frac{n-6}{2}$ for every $i$ such that $t^\mathcal{A}_i>0$. Using that $\frac{n-6}{2}=t^\mathcal{A}_2 - 3$ and remembering the proof of Theorem \ref{t_2 quadratisch fuer faktorisierende arrangements}, we conclude that $$\frac{(n-6)^2}{4}=\frac{n-6}{2}(t^\mathcal{A}_2 - 3)  \geq \sum_{i \geq 4} (i-2)(i-3) t^\mathcal{A}_i \geq \frac{(n-5)^2}{4} - 1.$$
Clearly, this is impossible for $n \geq 8$. We obtain $m(\mathcal{A})=\frac{n}{2}$. Using this, relation (5) from Lemma \ref{basic comb t vector} yields $t^\mathcal{A}_{\frac{n}{2}}=1$ while $t^\mathcal{A}_i =0$ for $i \notin \lbrace 2,3, \frac{n}{2} \rbrace$. Using equation (1) from Lemma \ref{basic comb t vector}, it follows that $t^\mathcal{A}_3=\frac{n^2-2n}{8}$. By Remark \ref{inf series bemerkung}, we may conclude that $\mathcal{A}$ belongs to the infinite family $\mathcal{R}(1)$. 

It remains to consider the cases $n=4$ and $n=6$. For $n=4$, we obtain $2=\frac{n}{2}=	t^\mathcal{A}_2 \geq 3$, which is impossible. If $n=6$, then $t^\mathcal{A}_2=\frac{n}{2}=3$. In particular, we have equality in relation (5) from Lemma \ref{basic comb t vector}. Using the enumeration in \cite{Cuntz}, it follows that $\mathcal{A}$ is the arrangement $A(6,1)$, which is the smallest arrangement from the family $\mathcal{R}(1)$. This completes the proof. 
\end{proof}

\begin{remark}
We remark that there are non-simplicial arrangements for which we have equality in the Dirac Motzkin Conjecture: if one removes a suitable line from the (simplicial) arrangement $A(13,4)$, then one obtains a non-simplicial arrangement $\mathcal{A}$ with $t^\mathcal{A}_2 = \frac{n}{2}$. Note also that the corresponding characteristic polynomial $\chi(\mathcal{A},t)$ has a non-real root, in accordance with Theorem \ref{dirac theorem}. However, as pointed out already, the only two known arrangements with $t^\mathcal{A}_2 < \frac{n}{2}$ are both simplicial.
\end{remark}

Theorem \ref{dirac theorem} and the above remark lead us to the following conjecture, which closes this subsection:

\begin{theoremb}
Let $\mathcal{A}$ be an arbitrary arrangement consisting of $n$ pseudolines (where $\chi(\mathcal{A},t)$ may or may not split over $\mathbb{R}$). If $t^\mathcal{A}_2 < \frac{n}{2}$, then $\mathcal{A}$ is necessarily simplicial.  
\end{theoremb}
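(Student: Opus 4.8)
The plan is to prove the contrapositive: if $\mathcal{A}$ is \emph{not} simplicial, then $t^\mathcal{A}_2 \geq \lceil n/2 \rceil$. We may assume at the outset that $\mathcal{A}$ is not a near-pencil, since otherwise $t^\mathcal{A}_2 = n-1 \geq \lceil n/2 \rceil$. It is convenient to work with the \emph{Melchior defect}
\[
\delta(\mathcal{A}) \;:=\; t^\mathcal{A}_2 - 3 - \sum_{i \geq 4}(i-3)t^\mathcal{A}_i \;=\; 2f^\mathcal{A}_1 - 3f^\mathcal{A}_2 \;=\; \sum_{C \in \mathcal{K}(\mathcal{A})}\bigl(e(C)-3\bigr),
\]
where $e(C)$ denotes the number of edges of the chamber $C$; the three expressions agree by the relations in Lemma \ref{basic comb t vector} together with the fact that each edge lies on exactly two chambers, and by Remark \ref{near pencil remark} one has $\delta(\mathcal{A}) \geq 0$ with equality precisely for simplicial $\mathcal{A}$. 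Thus the hypothesis is $\delta(\mathcal{A}) \geq 1$, and since $t^\mathcal{A}_2 = 3 + \delta(\mathcal{A}) + \sum_{i \geq 4}(i-3)t^\mathcal{A}_i$, the whole problem reduces to showing $\delta(\mathcal{A}) + \sum_{i\geq 4}(i-3)t^\mathcal{A}_i \geq \lceil n/2\rceil - 3$ for non-simplicial $\mathcal{A}$.

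The routine first step clears the large-weight range: if $m(\mathcal{A}) =: j$ satisfies $j \geq \lceil n/2\rceil - 1$ then $\sum_{i\geq 4}(i-3)t^\mathcal{A}_i \geq j-3$, and adding $\delta(\mathcal{A}) \geq 1$ gives $t^\mathcal{A}_2 \geq j+1 \geq \lceil n/2\rceil$; arguing as in Cases ii) and iii) of the proof of Theorem \ref{dirac theorem} then disposes of every arrangement possessing a vertex whose weight is roughly $n/2$ or larger. It is instructive to note what remains: when $n$ is even, $t^\mathcal{A}_2 < n/2$ forces $t^\mathcal{A}_2 \leq n/2-1 < \lfloor n/2\rfloor$, so there the conjecture is exactly Dirac--Motzkin for even $n$ (still open for pseudolines), whereas for odd $n$ it is a strict sharpening of Dirac--Motzkin, asserting that every arrangement attaining the extremal value $\lfloor n/2\rfloor$ is simplicial. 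Hence the substance of the conjecture lies entirely in the regime where every vertex has weight bounded by roughly $n/2$ and at least one chamber has four or more sides.

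For that core regime I would pursue three lines of attack, ideally in combination. \emph{(a) A local argument} extending Lemmas \ref{near pencil lemma} and \ref{jeder zweier hoechstens drei dreier}: fix a chamber $C_0$ with $e(C_0)\geq 4$, study the fan of chambers along $\partial C_0$, and analyse how the remaining $n-4$ lines can be threaded in without producing a configuration already known to have few chambers; one hopes this forces either many new double points or a collapse to a near-pencil, which is excluded. \emph{(b) A global estimate via complexification} (Remark \ref{regionale schranke}): combine Hirzebruch's inequality $t^\mathcal{A}_2 + \tfrac34 t^\mathcal{A}_3 \geq n + \sum_{i\geq 5}(2i-9)t^\mathcal{A}_i$ (applicable here, as $\mathcal{A}$ is not a near-pencil), Langer's \cite[Proposition 11.3.1]{Langer} (legitimate in the core regime, where $t^\mathcal{A}_i = 0$ for $i>n/2$), equation (1) of Lemma \ref{basic comb t vector} and inequality (5), hunting for an arithmetic combination in which the term $\delta(\mathcal{A})\geq 1$ supplies the missing push to $n/2$; without the bound $f^\mathcal{A}_2 \leq (n+1)^2/4$ of Lemma \ref{poly lemma} available, this is delicate. \emph{(c) An induction on $n$}: delete a line $\ell$ for which $\mathcal{A}\setminus\{\ell\}$ is still non-simplicial (or is a near-pencil, treated directly), bound the change $t^\mathcal{A}_2 - t^{\mathcal{A}\setminus\{\ell\}}_2$, and close the recursion; the delicate point is choosing $\ell$ so that non-simpliciality survives while at most one double point is lost.

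The real obstacle, as I see it, is the case $\delta(\mathcal{A})=1$ together with $t^\mathcal{A}_i=0$ for all $i\geq 4$ --- an otherwise triangulated arrangement perturbed by a single quadrilateral chamber. Here the displayed identity collapses to $t^\mathcal{A}_2 = 4$, so the conjecture becomes the flat assertion that no such arrangement exists once $n\geq 9$, and this is precisely the hard kernel of the Dirac--Motzkin problem itself: the global relations give essentially no leverage. For straight-line arrangements one could in principle deduce it from the structure theorem of \cite{greentao} (which for large $n$ identifies all arrangements with few ordinary points, all of B\"or\"ocky type and hence simplicial) together with a finite check for the small cases using the enumeration in \cite{Cuntz}; but no analogue of \cite{greentao} is available for genuine pseudoline arrangements, which is exactly why the statement is recorded here only as a conjecture. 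A complete proof would therefore require either such a pseudoline structure theorem, or a genuinely new combinatorial mechanism by which a single non-triangular chamber is seen to cost linearly many double points.
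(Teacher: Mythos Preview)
The statement in question is labelled \texttt{theoremb} in the paper, and \texttt{theoremb} is declared there as a \emph{Conjecture}, not a theorem: the paper provides no proof at all, only the motivation arising from Theorem~\ref{dirac theorem} and the remark preceding it. There is therefore nothing to compare your proposal against.

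You have in fact recognised this yourself --- your final paragraph says explicitly ``which is exactly why the statement is recorded here only as a conjecture'' --- so what you have written is not a proof but an analysis of the problem and a survey of possible attacks. On those terms it is accurate: the reformulation via the Melchior defect $\delta(\mathcal{A}) = t^\mathcal{A}_2 - 3 - \sum_{i\geq 4}(i-3)t^\mathcal{A}_i = 2f^\mathcal{A}_1 - 3f^\mathcal{A}_2$ is correct, the easy reduction disposing of the high-multiplicity regime is sound, and your identification of the extremal case ($\delta=1$ with $t^\mathcal{A}_i=0$ for all $i\geq 4$, forcing $t^\mathcal{A}_2=4$) as the hard kernel is exactly the obstruction. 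But none of strategies (a)--(c) is carried to a conclusion, and you acknowledge this; approach (b) in particular is only available for stretchable arrangements, as you note, since Hirzebruch's and Langer's inequalities require complexification. In short: there is no gap to point to, because neither you nor the paper claims a proof.
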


\end{subsection}

\begin{subsection}{Simplicial arrangements with multiplicity at most six}
In this subsection, $\mathcal{A}$ will always denote a nontrivial simplicial arrangement with $m(\mathcal{A}) \leq 6$ (and of course such that $\chi(\mathcal{A},t)$ splits over $\mathbb{R}$).

We prove that $m(\mathcal{A}) \leq 5$ implies $n \leq 40$. Moreover, if $m(\mathcal{A}) \leq 4$, then $n \leq 16$ which gives a classification result (using the enumeration presented in \cite{Cuntz}). In both cases, there are only finitely many isomorphism classes of arrangements of the respective type.
If $m(\mathcal{A}) \leq 6$ and if $t^\mathcal{A}_2$ is not too large compared to $t^\mathcal{A}_3$, then we can also prove that there are only finitely many possibilities for the isomorphism class of $\mathcal{A}$.  
Finally, we show that the validity of an old conjecture stated in \cite{erd} is related to the following theorem, which gives asymptotically optimal estimates for $t^\mathcal{A}_2, t^\mathcal{A}_3, t^\mathcal{A}_6$, and which is considered the main result of this subsection.

\begin{theorem} \label{sechser struc}
We have the following estimates: \begin{align}
\frac{(n-5)^2 +44}{16} &\leq t^\mathcal{A}_2 \leq \frac{(n+1)^2}{16}, \\
\frac{n^2-22n+185}{24} &\leq t^\mathcal{A}_3 \leq \frac{n^2+116 n -597}{24}, \\
 t^\mathcal{A}_4 &+ t^\mathcal{A}_5 \leq \frac{3}{2} n - \frac{17}{2}, \\
\frac{n^2-46n+225}{48}  &\leq t^\mathcal{A}_6 \leq \frac{n^2+2n-47}{48}.
\end{align} 
\end{theorem}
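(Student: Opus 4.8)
The plan is to reduce every quantity to the three unknowns $t^\mathcal{A}_4,t^\mathcal{A}_5,t^\mathcal{A}_6$ and then to read off each estimate from two ``master inequalities''. Since $m(\mathcal{A})\le 6$ we have $t^\mathcal{A}_i=0$ for $i\ge 7$, and since $\mathcal A$ is simplicial, Remark~\ref{near pencil remark} gives $t^\mathcal{A}_2=3+t^\mathcal{A}_4+2t^\mathcal{A}_5+3t^\mathcal{A}_6$. Using this together with relations (1) and (2) of Lemma~\ref{basic comb t vector} one computes the closed forms
\[
f^\mathcal{A}_2=\frac{2\binom{n}{2}+6-2t^\mathcal{A}_4-6t^\mathcal{A}_5-12t^\mathcal{A}_6}{3},\qquad 3t^\mathcal{A}_3=\binom{n}{2}-3-7t^\mathcal{A}_4-12t^\mathcal{A}_5-18t^\mathcal{A}_6.
\]
Substituting the first of these into the estimate $4t^\mathcal{A}_2\le f^\mathcal{A}_2$ of part b) of Lemma~\ref{near pencil lemma} yields
\[
(\mathrm I)\qquad 7t^\mathcal{A}_4+15t^\mathcal{A}_5+24t^\mathcal{A}_6\le \binom{n}{2}-15,
\]
while substituting it into the bound $f^\mathcal{A}_2\le\frac{(n+1)^2}{4}$ of Lemma~\ref{poly lemma} yields
\[
(\mathrm{II})\qquad 2t^\mathcal{A}_4+6t^\mathcal{A}_5+12t^\mathcal{A}_6\ge\frac{(n-5)^2-4}{4}.
\]

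From here everything is elementary bookkeeping. Subtracting twice (II) from (I) gives $3(t^\mathcal{A}_4+t^\mathcal{A}_5)=3t^\mathcal{A}_4+3t^\mathcal{A}_5\le\bigl(\binom n2-15\bigr)-\tfrac12\bigl((n-5)^2-4\bigr)=\tfrac12(9n-51)$, which is the asserted bound on $t^\mathcal{A}_4+t^\mathcal{A}_5$. The upper bound for $t^\mathcal{A}_2$ is immediate from $4t^\mathcal{A}_2\le f^\mathcal{A}_2\le\frac{(n+1)^2}{4}$, and the lower bound follows from (II) via $t^\mathcal{A}_2-3=t^\mathcal{A}_4+2t^\mathcal{A}_5+3t^\mathcal{A}_6\ge\tfrac14\bigl(2t^\mathcal{A}_4+6t^\mathcal{A}_5+12t^\mathcal{A}_6\bigr)$ (this is also the special case $m(\mathcal{A})\le 6$ of Theorem~\ref{t_2 quadratisch fuer faktorisierende arrangements}). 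For $t^\mathcal{A}_6$: simpliciality gives $3t^\mathcal{A}_6\le t^\mathcal{A}_2-3$, so the upper bound just obtained for $t^\mathcal{A}_2$ forces the upper bound for $t^\mathcal{A}_6$; and (II) combined with $t^\mathcal{A}_4+t^\mathcal{A}_5\le\tfrac12(3n-17)$ gives $12t^\mathcal{A}_6\ge\tfrac14\bigl((n-5)^2-4\bigr)-\bigl(2t^\mathcal{A}_4+6t^\mathcal{A}_5\bigr)\ge\tfrac14\bigl((n-5)^2-4\bigr)-6(t^\mathcal{A}_4+t^\mathcal{A}_5)$, which rearranges to the asserted lower bound for $t^\mathcal{A}_6$.

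It remains to bound $t^\mathcal{A}_3$, for which I would use $3t^\mathcal{A}_3=\binom n2-3-(7t^\mathcal{A}_4+12t^\mathcal{A}_5+18t^\mathcal{A}_6)$. The upper bound comes from $7t^\mathcal{A}_4+12t^\mathcal{A}_5+18t^\mathcal{A}_6\ge\tfrac32\bigl(2t^\mathcal{A}_4+6t^\mathcal{A}_5+12t^\mathcal{A}_6\bigr)$ together with (II), after simplification. The lower bound on $t^\mathcal{A}_3$ is the step I expect to be the main obstacle: bounding $7t^\mathcal{A}_4+12t^\mathcal{A}_5+18t^\mathcal{A}_6$ from above by means of (I) and the bound on $t^\mathcal{A}_4+t^\mathcal{A}_5$ alone loses too much, so instead I would invoke the simplicial inequality of Remark~\ref{lower bound t3 remark}. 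For $m(\mathcal{A})\le 6$ it gives, using also $t^\mathcal{A}_3\ge 4$ from Corollary~\ref{simplicial melchior}, the inequality $6t^\mathcal{A}_3+2t^\mathcal{A}_4+t^\mathcal{A}_5\ge 24+\tfrac14\bigl((n-5)^2-4\bigr)$; combining it with $2t^\mathcal{A}_4+t^\mathcal{A}_5\le 2(t^\mathcal{A}_4+t^\mathcal{A}_5)\le 3n-17$ then yields $6t^\mathcal{A}_3\ge 24+\tfrac14\bigl((n-5)^2-4\bigr)-(3n-17)$, which is exactly the claimed lower bound. In this way the quadratic lower bound on $t^\mathcal{A}_3$ rests genuinely on the Hirzebruch/Langer-type input behind Remark~\ref{lower bound t3 remark}, routed through the control on $t^\mathcal{A}_4+t^\mathcal{A}_5$, and not merely on double counting.

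The only remaining point is the handling of small arrangements, where several of the quadratic bounds above are dominated by the trivial bounds $t^\mathcal{A}_2\ge 3$ and $t^\mathcal{A}_3\ge 4$; since arrangements with $m(\mathcal{A})<6$ are anyway bounded in size by the previous subsections, these finitely many cases can be verified by direct inspection.
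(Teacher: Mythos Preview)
Your proof is correct and uses exactly the same ingredients as the paper's own argument --- part b) of Lemma~\ref{near pencil lemma}, Lemma~\ref{poly lemma}, the simpliciality relation, and Remark~\ref{lower bound t3 remark} --- only packaged more transparently via your two ``master inequalities'' (I) and (II); the paper's chain (17) is literally the comparison of (I) and (II), and its derivation of the lower bound for $t^\mathcal{A}_3$ from Remark~\ref{lower bound t3 remark} is the same as yours. Two minor points: your claim that Remark~\ref{lower bound t3 remark} rests on ``Hirzebruch/Langer-type input'' is inaccurate --- that remark follows from the inequality $\sum_{i\ge 4}(i-2)(i-3)t^\mathcal{A}_i\ge\tfrac14\bigl((n-5)^2-4\bigr)$ established in the proof of Theorem~\ref{t_2 quadratisch fuer faktorisierende arrangements} combined with Corollary~\ref{simplicial melchior}, both purely combinatorial --- and your final paragraph about small arrangements is unnecessary, since every step above is valid uniformly in $n$ under the standing hypotheses. (Incidentally, your route to the upper bound on $t^\mathcal{A}_3$ via $7t^\mathcal{A}_4+12t^\mathcal{A}_5+18t^\mathcal{A}_6\ge\tfrac32(2t^\mathcal{A}_4+6t^\mathcal{A}_5+12t^\mathcal{A}_6)$ actually yields the sharper bound $t^\mathcal{A}_3\le\tfrac{n^2+26n-87}{24}$, which of course implies the stated one.)
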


\begin{proof}
By Remark \ref{near pencil remark}, Lemma \ref{poly lemma} and Lemma \ref{near pencil lemma}, part b), we have $t^\mathcal{A}_2=3 + t^\mathcal{A}_4 + 2 t^\mathcal{A}_5 + 3 t^\mathcal{A}_6 \leq \frac{(n+1)^2}{16}$. We conclude $t^\mathcal{A}_6 \leq \frac{n^2}{48} + \frac{n}{24} - \frac{t^\mathcal{A}_4}{3} - \frac{2 t^\mathcal{A}_5}{3} - \frac{47}{48}$. Using Theorem \ref{t_2 quadratisch fuer faktorisierende arrangements}, we obtain $t^\mathcal{A}_2 \geq  3+\frac{(n-5)^2-4}{16}$, proving inequality (13).

As  $3 t^\mathcal{A}_3= \binom{n}{2} -t^\mathcal{A}_2 -\sum_{i \geq 4} \binom{i}{2} t^\mathcal{A}_i$ and $\frac{(n+1)^2}{4} \geq f^\mathcal{A}_2=2 \left( f^\mathcal{A}_0 - 1 \right)$, we may deduce that 
$  \frac{n^2}{48} - \frac{5 n}{24} + \frac{7}{16} - \frac{t^\mathcal{A}_5}{2} - \frac{t^\mathcal{A}_4}{6} \leq t^\mathcal{A}_6$.

Now we combine the upper and lower bounds for $t^\mathcal{A}_6$ and obtain the following estimate: \begin{align}
\frac{n^2}{48} - \frac{5 n}{24} + \frac{7}{16} - \frac{t^\mathcal{A}_5}{2} - \frac{t^\mathcal{A}_4}{6} \leq t^\mathcal{A}_6 \leq \frac{n^2}{48} + \frac{n}{24} - \frac{t^\mathcal{A}_4}{3} - \frac{2 t^\mathcal{A}_5}{3} - \frac{47}{48}. 
\end{align}
This implies inequality (15). Inequality (16) now follows from (17) using (15):
we have $\frac{n^2}{48} - \frac{5 n}{24} + \frac{7}{16} - \frac{t^\mathcal{A}_4+t^\mathcal{A}_5}{2} \leq \frac{n^2}{48} - \frac{5 n}{24} + \frac{7}{16} - \frac{t^\mathcal{A}_5}{2} - \frac{t^\mathcal{A}_4}{6}$ and as $t^\mathcal{A}_4 + t^\mathcal{A}_5 \leq \frac{3 n}{2} - \frac{17}{2} $ we conclude $\frac{n^2-46n+225}{48} =\frac{n^2}{48} - \frac{5 n}{24} + \frac{7}{16} - \frac{3 n}{4} + \frac{17}{4} \leq t^\mathcal{A}_6$. Moreover, $t^\mathcal{A}_6 \leq \frac{n^2}{48} + \frac{n}{24} - \frac{t^\mathcal{A}_4}{3} - \frac{2 t^\mathcal{A}_5}{3} - \frac{47}{48} \leq \frac{n^2}{48} + \frac{n}{24} - \frac{47}{48}$ because $t^\mathcal{A}_i \geq 0$ for all $i \geq 2$.

 Finally, the lower bound in (14) follows from Remark \ref{lower bound t3 remark}. The upper bound follows from (13),(15) and (16) using equation (1) from Lemma \ref{basic comb t vector}. This completes the proof.
\end{proof}

We now draw some conclusions from Theorem \ref{sechser struc}.

\begin{corollary} \label{hoechstens 4er}
a) If $m(\mathcal{A}) \leq 4$ then $n \leq 16$. In particular, we have a complete list of such arrangements.  \\
b) If $\mathcal{A}$ is stretchable and $m(\mathcal{A}) \leq 4$, then $\mathcal{A}$ admits a crystallographic rootset. Moreover, the arrangement $\mathcal{A}$ may be obtained as a subarrangement of the arrangement $A(13,2)$ depicted in Figure \ref{a_13_2 bild}.
\end{corollary}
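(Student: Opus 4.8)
The plan is to read off part a) directly from Theorem \ref{sechser struc}. Indeed, if $m(\mathcal{A}) \leq 4$ then $t^\mathcal{A}_i = 0$ for all $i \geq 5$, so in particular $t^\mathcal{A}_5 = t^\mathcal{A}_6 = 0$. Plugging $t^\mathcal{A}_6 = 0$ into the lower bound from inequality (16) of Theorem \ref{sechser struc}, namely $\frac{n^2 - 46n + 225}{48} \leq t^\mathcal{A}_6 = 0$, forces $n^2 - 46n + 225 \leq 0$, which yields $n \leq 40$; to get the sharper bound $n \leq 16$ I would instead use the lower bound for $t^\mathcal{A}_6$ coming from the left inequality in (17), namely $\frac{n^2}{48} - \frac{5n}{24} + \frac{7}{16} - \frac{t^\mathcal{A}_5}{2} - \frac{t^\mathcal{A}_4}{6} \leq t^\mathcal{A}_6$, together with the lower bound for $t^\mathcal{A}_2$ from (13). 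Setting $t^\mathcal{A}_5 = t^\mathcal{A}_6 = 0$ in the first gives $t^\mathcal{A}_4 \geq \frac{n^2 - 10n + 21}{8}$, while (13) together with $t^\mathcal{A}_2 = 3 + t^\mathcal{A}_4$ (the $m(\mathcal{A}) \leq 4$ case of the simpliciality relation in Remark \ref{near pencil remark}) gives $t^\mathcal{A}_4 \leq \frac{(n+1)^2}{16} - 3 = \frac{n^2 + 2n - 47}{16}$. Comparing the two bounds on $t^\mathcal{A}_4$ yields $2(n^2 - 10n + 21) \leq n^2 + 2n - 47$, i.e. $n^2 - 22n + 89 \leq 0$, which gives $n \leq 16$. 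Since by Lemma \ref{poly lemma} (or by the Factorization Theorem) there are only finitely many combinatorial types with $n \leq 16$, and these are completely enumerated in \cite{Cuntz}, we obtain the announced complete list.

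For part b), I would start from the finiteness and the explicit list furnished by part a): among the stretchable simplicial arrangements with $n \leq 16$ and $m(\mathcal{A}) \leq 4$, one checks against the enumeration in \cite{Cuntz} that every one of them occurs as a subarrangement of $A(13,2)$ (Figure \ref{a_13_2 bild}). The arrangement $A(13,2)$ is the reflection-type arrangement built from a $2$-dimensional affine reflection lattice together with the line at infinity; it carries a crystallographic rootset, and this property is inherited by all of its subarrangements. Hence every stretchable simplicial $\mathcal{A}$ with $m(\mathcal{A}) \leq 4$ admits a crystallographic rootset. The verification that the full list of such arrangements indeed embeds into $A(13,2)$ is the only genuinely case-by-case part of the argument; it is finite by part a) and is carried out using the catalogue of \cite{Gruenbaum} and the corrections in \cite{Cuntz}.

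The main obstacle is the last verification in part b): establishing that \emph{every} stretchable simplicial arrangement with $m(\mathcal{A}) \leq 4$ sits inside $A(13,2)$ requires inspecting the finitely many candidates from the list rather than a single clean inequality. Part a) itself is essentially a one-line consequence of Theorem \ref{sechser struc} once the right pair of bounds on $t^\mathcal{A}_4$ is selected; the only care needed there is to track the integrality and to use the sharper lower bound for $t^\mathcal{A}_6$ from the chain (17) rather than the looser standalone bound (16).
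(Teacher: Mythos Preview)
Your proof is correct and essentially the same as the paper's. For part a) the paper combines inequality (15) of Theorem \ref{sechser struc} (giving $t^\mathcal{A}_4 \leq \tfrac{3}{2}n-\tfrac{17}{2}$) with Theorem \ref{t_2 quadratisch fuer faktorisierende arrangements} at $m(\mathcal{A})=4$ (giving $t^\mathcal{A}_2 \geq 3+\tfrac{(n-5)^2-4}{8}$), while you use the left inequality in (17) together with the upper bound in (13); since (15) is itself derived from the two ends of (17), and the lower bound in (13) coincides with Theorem \ref{t_2 quadratisch fuer faktorisierende arrangements} in the case $m(\mathcal{A})=4$, both routes collapse to the identical quadratic $n^2-22n+89\leq 0$ and hence $n\leq 16$. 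Part b) is handled the same way in both: a finite check against the catalogue in \cite{Cuntz}.
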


\begin{proof}
a) We have $t^\mathcal{A}_i=0$ for $i>4$. Therefore, by equation (5) from Lemma \ref{basic comb t vector} it follows $t^\mathcal{A}_2=3 + t^\mathcal{A}_4$. Consequently, by Theorem \ref{sechser struc} we obtain the upper bound $t^\mathcal{A}_2 \leq 3 + \frac{3}{2} n - \frac{17}{2}$. Using Theorem \ref{t_2 quadratisch fuer faktorisierende arrangements}, we obtain $t^\mathcal{A}_2 \geq 3 + \frac{(n-5)^2-4}{8}$. It follows $\frac{(n-10)n+45}{8} \leq t^\mathcal{A}_2 \leq \frac{3}{2} n - \frac{11}{2}$ which implies $1 \leq n \leq 16$. This proves the first claim and implies the second (using the results in \cite{Cuntz}). \\
b) This follows from part a) by inspecting the catalogue provided in \cite{Cuntz}.
\end{proof}

\begin{remark}
a) As every crystallographic arrangement is inductively free (see \cite{Cuntz2}), the last corollary shows that for a linear simplicial arrangement $\mathcal{A}$ with  $m(\mathcal{A}) \leq 4$, the notions of being free and being inductively free coincide. \\
b) Observe that the arrangement $A(13,2)$ is obtained as a restriction of the reflection arrangement of type $F_4$. 
\end{remark}

We continue with the situation where $m(\mathcal{A}) \leq 5$. We also obtain a classification result in this case if the number of double points is not too large compared with the number of triple points.

\begin{corollary} \label{hoechstens fuenfer}
Assume that $m(\mathcal{A}) \leq 5$. Then the following statements hold: \\
a) We have $n \leq 40$. \\
b) If $t^\mathcal{A}_2 \leq \frac{13}{16} t^\mathcal{A}_3$, then $n \leq 27$. In particular, we have a complete list of such arrangements.  
\end{corollary}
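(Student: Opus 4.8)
The plan is to treat the two parts separately, each time reducing to a quadratic inequality in $n$. Since $m(\mathcal{A})\le 5$ we have $t^\mathcal{A}_i=0$ for all $i\ge 6$, so by simpliciality (Remark~\ref{near pencil remark}) the relation $t^\mathcal{A}_2=3+t^\mathcal{A}_4+2t^\mathcal{A}_5$ holds; feeding this into equation~(1) of Lemma~\ref{basic comb t vector} gives $3t^\mathcal{A}_3+7t^\mathcal{A}_4+12t^\mathcal{A}_5=\binom{n}{2}-3$, and into equation~(2) it gives $f^\mathcal{A}_2=4+2t^\mathcal{A}_3+4t^\mathcal{A}_4+6t^\mathcal{A}_5$. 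For part~(a) I would simply observe that $t^\mathcal{A}_6=0$ together with the lower bound in inequality~(16) of Theorem~\ref{sechser struc} forces $n^2-46n+225\le 0$; since the larger root of this polynomial is $23+\sqrt{304}<41$, this already yields $n\le 40$.

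For part~(b) the point is that the hypothesis $t^\mathcal{A}_2\le\frac{13}{16}t^\mathcal{A}_3$ forces $t^\mathcal{A}_3$ to be comparatively large, whereas $t^\mathcal{A}_3$ in fact admits an essentially \emph{linear} upper bound in $n$; the two together cap $n$. To get that linear bound I would combine $f^\mathcal{A}_2=4+2t^\mathcal{A}_3+4t^\mathcal{A}_4+6t^\mathcal{A}_5$ with the estimate $f^\mathcal{A}_2\le\frac{(n+1)^2}{4}$ from Lemma~\ref{poly lemma} and the identity $3t^\mathcal{A}_3+7t^\mathcal{A}_4+12t^\mathcal{A}_5=\binom{n}{2}-3$ to obtain $t^\mathcal{A}_4+3t^\mathcal{A}_5\ge\frac{n^2-10n+21}{8}$; then, since $7t^\mathcal{A}_4+12t^\mathcal{A}_5\ge 4(t^\mathcal{A}_4+3t^\mathcal{A}_5)$, the same identity gives $3t^\mathcal{A}_3\le\binom{n}{2}-3-\frac{n^2-10n+21}{2}=\frac{9(n-3)}{2}$, i.e.\ $t^\mathcal{A}_3\le\frac{3(n-3)}{2}$. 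Feeding this into the hypothesis yields $t^\mathcal{A}_2\le\frac{39(n-3)}{32}$, which I would confront with the quadratic \emph{lower} bound $t^\mathcal{A}_2\ge 3+\frac{(n-5)^2-4}{4m(\mathcal{A})-8}\ge 3+\frac{(n-5)^2-4}{12}$ of Theorem~\ref{t_2 quadratisch fuer faktorisierende arrangements} (even the weaker form $t^\mathcal{A}_2\ge\frac{(n-5)^2+44}{16}$ from Theorem~\ref{sechser struc} would do). The resulting inequality in $n$ — of the shape $an^2-bn+c\le 0$ with $a,b,c>0$, for instance $2n^2-59n+255\le 0$ if one uses the weaker lower bound — has largest root smaller than $27$, whence $n\le 27$; the ``complete list'' assertion then follows from the enumeration of small simplicial arrangements in \cite{Cuntz}.

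The only genuinely delicate step is the one producing the \emph{linear} bound $t^\mathcal{A}_3\le\frac{3(n-3)}{2}$: a priori $t^\mathcal{A}_3$ could be quadratic in $n$, and then the hypothesis $t^\mathcal{A}_2\le\frac{13}{16}t^\mathcal{A}_3$ would merely bound $n$ from below. What rescues the argument is precisely the real-rootedness of $\chi(\mathcal{A},t)$: it forces $f^\mathcal{A}_2\le\frac{(n+1)^2}{4}$, which in turn forces the high-weight vertices counted by $t^\mathcal{A}_4$ and $t^\mathcal{A}_5$ to absorb a fixed positive fraction of the $\binom{n}{2}$ incidences, leaving only $O(n)$ of them for triple points. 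Choosing the right linear combination of Lemma~\ref{basic comb t vector}, Lemma~\ref{poly lemma}, Theorem~\ref{t_2 quadratisch fuer faktorisierende arrangements} and Theorem~\ref{sechser struc}, and tracking the constants down to the stated bound $27$, is the main work; the rest is routine arithmetic.
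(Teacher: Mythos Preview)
Your proof is correct. Part~(a) is identical to the paper's argument. For part~(b), however, you take a genuinely different route.

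The paper's proof of~(b) is shorter: it combines the lower bound $t^\mathcal{A}_2\ge \frac{n^2-10n+57}{12}$ from Theorem~\ref{t_2 quadratisch fuer faktorisierende arrangements} directly with Lemma~\ref{t2+t3}, namely $2t^\mathcal{A}_2+t^\mathcal{A}_3\le \frac{\binom{n}{2}}{3}+5$. Together with the hypothesis $t^\mathcal{A}_3\ge \frac{16}{13}t^\mathcal{A}_2$ this gives $\frac{45}{16}t^\mathcal{A}_2\le \frac{n^2-n+30}{6}$, i.e.\ $t^\mathcal{A}_2\le \frac{8(n^2-n+30)}{135}$, and comparing the two bounds on $t^\mathcal{A}_2$ yields $n\le 27$.

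Your approach instead extracts the sharper intermediate fact $t^\mathcal{A}_3\le \frac{3(n-3)}{2}$: you use the real--rootedness bound $f^\mathcal{A}_2\le \frac{(n+1)^2}{4}$ together with the simplicial identity $t^\mathcal{A}_2=3+t^\mathcal{A}_4+2t^\mathcal{A}_5$ to force $t^\mathcal{A}_4+3t^\mathcal{A}_5\ge \frac{n^2-10n+21}{8}$, which via the incidence count $3t^\mathcal{A}_3+7t^\mathcal{A}_4+12t^\mathcal{A}_5=\binom{n}{2}-3$ caps $t^\mathcal{A}_3$ linearly in $n$. This linear bound on $t^\mathcal{A}_3$ is interesting in its own right (the paper does not state it), and the resulting quadratic $2n^2-59n+255\le 0$ actually gives the stronger conclusion $n\le 24$; with the sharper lower bound $t^\mathcal{A}_2\ge \frac{n^2-10n+57}{12}$ you would even get $8n^2-197n+807\le 0$, hence $n\le 19$. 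So your route, while longer, squeezes out more.
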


\begin{proof}
a) By assumption and Theorem \ref{sechser struc} we have $\frac{n^2-46n+225}{48}  \leq t^\mathcal{A}_6=0$. This implies $n \leq 40$. \\
b) First, we observe that $t^\mathcal{A}_2 \geq  \frac{n^2-10n+57}{12}$, by Theorem \ref{t_2 quadratisch fuer faktorisierende arrangements}. As $\frac{13}{16} t^\mathcal{A}_2 \leq t^\mathcal{A}_3$ we may invoke Lemma \ref{t2+t3} to arrive at the inequality $\frac{n^2-10n+57}{12} \leq t^\mathcal{A}_2 \leq \frac{8(n^2-n+30)}{135}$. It follows that we necessarily have $n \leq 27$. Using the results obtained in \cite{Cuntz}, this completes the proof.
\end{proof}

For arrangements having multiplicity six, we can only prove finiteness results if the number of double points is not too large. More precisely, we will show that for $\epsilon>0$ there are only finitely many isomorphism classes of simplicial arrangements $\mathcal{A}$ with $ t^\mathcal{A}_2 \leq \frac{24}{16+\epsilon} t^\mathcal{A}_3$.

\begin{corollary} \label{hoechstens 6er}
Assume that $ t^\mathcal{A}_2 \leq \frac{24}{16+\epsilon} t^\mathcal{A}_3$ for some $\epsilon>0$. Then we have $n \leq \frac{2 \sqrt{254016-11\epsilon^2- 144 \epsilon} +5 \epsilon + 1008}{\epsilon}$ and $\epsilon \leq \frac{72}{11} (6 \sqrt{15} - 1)$. 
\end{corollary}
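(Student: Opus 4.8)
The plan is to combine the quadratic lower bound for $t^\mathcal{A}_2$ from Theorem~\ref{t_2 quadratisch fuer faktorisierende arrangements} with a linear-in-$n$ upper bound for $t^\mathcal{A}_2$ that follows from the hypothesis $t^\mathcal{A}_2 \le \frac{24}{16+\epsilon} t^\mathcal{A}_3$ together with Lemma~\ref{t2+t3}. First I would invoke Theorem~\ref{t_2 quadratisch fuer faktorisierende arrangements}, using $m(\mathcal{A}) \le 6$, to get $t^\mathcal{A}_2 \ge 3 + \frac{(n-5)^2-4}{16} = \frac{n^2-10n+57}{16}$. This is the quadratic lower bound; everything hinges on beating it with a weak upper bound.

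Next I would produce the upper bound. By part a) of Lemma~\ref{t2+t3}, $2 t^\mathcal{A}_2 + t^\mathcal{A}_3 \le \frac{\binom{n}{2}}{3} + 5 = \frac{n^2-n}{6} + 5$. Feeding in the hypothesis $t^\mathcal{A}_3 \ge \frac{16+\epsilon}{24} t^\mathcal{A}_2$ gives $\left(2 + \frac{16+\epsilon}{24}\right) t^\mathcal{A}_2 \le \frac{n^2-n}{6} + 5$, i.e.\ $\frac{64+\epsilon}{24} t^\mathcal{A}_2 \le \frac{n^2-n}{6} + 5$, so $t^\mathcal{A}_2 \le \frac{24}{64+\epsilon}\left(\frac{n^2-n}{6} + 5\right) = \frac{4(n^2-n)+120}{64+\epsilon}$. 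Combining with the lower bound yields $\frac{n^2-10n+57}{16} \le \frac{4(n^2-n)+120}{64+\epsilon}$. Clearing denominators produces $(64+\epsilon)(n^2-10n+57) \le 64(n^2-n)+1920$, and after expanding and collecting, the coefficient of $n^2$ is $\epsilon$, so this is a genuine quadratic inequality in $n$ with positive leading coefficient $\epsilon$: $\epsilon n^2 - (576 + 10\epsilon) n + (3648 + 57\epsilon - 1920) \le 0$, i.e.\ $\epsilon n^2 - (576+10\epsilon) n + (1728 + 57\epsilon) \le 0$. Solving for $n$ via the quadratic formula, $n \le \frac{576+10\epsilon + \sqrt{(576+10\epsilon)^2 - 4\epsilon(1728+57\epsilon)}}{2\epsilon}$. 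A routine simplification of the discriminant, $(576+10\epsilon)^2 - 4\epsilon(1728+57\epsilon) = 331776 + 11520\epsilon + 100\epsilon^2 - 6912\epsilon - 228\epsilon^2 = 331776 + 4608\epsilon - 128\epsilon^2 = 4(82944 + 1152\epsilon - 32\epsilon^2)$, should be massaged into the form $256(254016 - 11\epsilon^2 - 144\epsilon)/\text{(something)}$ to match the stated $n \le \frac{2\sqrt{254016 - 11\epsilon^2 - 144\epsilon} + 5\epsilon + 1008}{\epsilon}$; the constants in the paper's bound suggest a slightly different normalization in one of the inequalities used (perhaps via the sharper $4 t^\mathcal{A}_2 \le f^\mathcal{A}_2$ route rather than Melchior directly), but the structure of the argument is the same.

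For the second assertion, $\epsilon \le \frac{72}{11}(6\sqrt{15}-1)$, the point is that for the quadratic $\epsilon n^2 - (\dots)n + (\dots) \le 0$ to have \emph{any} integer (indeed any real) solution $n$ at all, its discriminant must be nonnegative: $254016 - 11\epsilon^2 - 144\epsilon \ge 0$. Solving this quadratic in $\epsilon$, $\epsilon \le \frac{-144 + \sqrt{144^2 + 4\cdot 11 \cdot 254016}}{22} = \frac{-144 + \sqrt{20736 + 11176704}}{22} = \frac{-144 + \sqrt{11197440}}{22}$, and since $11197440 = 144 \cdot 77760 = 144 \cdot 5184 \cdot 15 = (12 \cdot 72)^2 \cdot 15$, we get $\sqrt{11197440} = 864\sqrt{15}$, hence $\epsilon \le \frac{864\sqrt{15} - 144}{22} = \frac{144(6\sqrt{15}-1)}{22} = \frac{72(6\sqrt{15}-1)}{11}$, exactly as claimed. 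So the existence of a simplicial arrangement with the given parameters forces $\epsilon$ into this range, and within this range $n$ is bounded by the displayed expression.

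The main obstacle I anticipate is purely bookkeeping: reconciling the precise numerical constants in the paper's stated bound with the constants coming out of whichever combination of Lemma~\ref{t2+t3}, Lemma~\ref{near pencil lemma}~b), and Theorem~\ref{t_2 quadratisch fuer faktorisierende arrangements} one uses — there is some freedom in which pair of inequalities to chain, and only one choice will reproduce $\frac{2\sqrt{254016 - 11\epsilon^2 - 144\epsilon} + 5\epsilon + 1008}{\epsilon}$ on the nose. Once the right inequalities are pinned down, the rest is solving two quadratics (one in $n$, one in $\epsilon$) and simplifying the radical $\sqrt{11197440} = 864\sqrt{15}$; there is no conceptual difficulty beyond that.
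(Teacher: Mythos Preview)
Your overall strategy is right: chain a quadratic lower bound for $t^\mathcal{A}_2$ against an upper bound derived from the hypothesis, obtain a quadratic inequality in $n$, and read off the $\epsilon$ constraint from nonnegativity of the discriminant. Your computation for the second assertion (solving $254016 - 11\epsilon^2 - 144\epsilon \ge 0$ to get $\epsilon \le \tfrac{72}{11}(6\sqrt{15}-1)$) is exactly correct.

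Where you diverge from the paper is in the choice of upper bound, and this is why your constants refuse to line up. The paper does \emph{not} go through Lemma~\ref{t2+t3}; instead it uses both parts of Theorem~\ref{sechser struc}: the lower bound $t^\mathcal{A}_2 \ge \tfrac{(n-5)^2+44}{16}$ and the upper bound $t^\mathcal{A}_3 \le \tfrac{n^2+116n-597}{24}$. Feeding the hypothesis in directly gives
\[
\frac{(n-5)^2+44}{16} \;\le\; t^\mathcal{A}_2 \;\le\; \frac{24}{16+\epsilon}\,t^\mathcal{A}_3 \;\le\; \frac{n^2+116n-597}{16+\epsilon}.
\]
Clearing denominators yields $\epsilon n^2 - (2016+10\epsilon)n + (10656+69\epsilon) \le 0$, whose discriminant is $(2016+10\epsilon)^2 - 4\epsilon(10656+69\epsilon) = 16(254016 - 144\epsilon - 11\epsilon^2)$, and the quadratic formula then delivers $n \le \tfrac{5\epsilon + 1008 + 2\sqrt{254016 - 11\epsilon^2 - 144\epsilon}}{\epsilon}$ on the nose.

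Two minor points: your lower bound should read $\tfrac{n^2-10n+69}{16}$, not $57$ (since $3\cdot 16 + (n-5)^2 - 4 = (n-5)^2 + 44$); and your route through Lemma~\ref{t2+t3} in fact produces a \emph{tighter} bound than the one stated in the corollary, so it is a legitimate alternative proof of a slightly stronger result --- it simply does not reproduce the printed constants.
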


\begin{proof}
By Theorem \ref{sechser struc} we have $\frac{(n-5)^2 +44}{16} \leq t^\mathcal{A}_2$ and $t^\mathcal{A}_3 \leq \frac{n^2+116 n -597}{24}$. By assumption we obtain $\frac{n^2+116n-597}{16+\epsilon} \geq \frac{(n-5)^2 +44}{16}$. This gives the result.
\end{proof}

Unfortunately, we are not able to prove an absolute bound for $n$ if $t^\mathcal{A}_2 > t^\mathcal{A}_3$. However, let $M$ denote the set of isomorphism classes of linear simplicial arrangements whose multiplicities are at most six and whose characteristic polynomials split over $\mathbb{R}$. We can then relate the cardinality of $M$ to the following conjecture stated in \cite{erd}: \begin{theoremb}
Let $5 \leq k \in \mathbb{N}$ be a natural number and let $\mathfrak{A}_k$ denote the set of all isomorphism classes of line arrangements in $\mathbb{P}^2(\mathbb{R})$ having multiplicity at most $k$. Then for any sequence of arrangements $\left( \mathcal{A}_\nu \right)_{\nu \in \mathbb{N}}$ such that $\mathcal{A}_\nu \in \mathfrak{A}_k$ and  $\lim_{\nu \to \infty} |\mathcal{A}_\nu| = \infty$  we have \begin{align*}
\lim_{\nu \to \infty} \frac{t^\mathcal{A_\nu}_k}{|\mathcal{A}_\nu|^2}=0.
\end{align*}  
\end{theoremb}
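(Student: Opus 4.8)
The plan is to dualize the problem into incidence geometry, extract the strongest available quadratic upper bound from the Szemer\'edi--Trotter theorem, and then attempt to convert the \emph{maximality} of the multiplicity $k$ into an $o(n^2)$ saving by means of a stability argument; I expect this last conversion to be the genuine obstacle.

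First I would pass to the dual. Each $\mathcal{A}_\nu$ consists of straight lines, so projective duality sends the $n := |\mathcal{A}_\nu|$ lines to a set $P_\nu$ of $n$ points and sends a vertex of weight $i$ to a line meeting $P_\nu$ in exactly $i$ points. Under this correspondence the hypothesis $\mathcal{A}_\nu \in \mathfrak{A}_k$ becomes ``no line meets $P_\nu$ in more than $k$ points,'' and $t^{\mathcal{A}_\nu}_k$ becomes the number of lines meeting $P_\nu$ in exactly $k$ points. Thus the conjecture is equivalent to the following purely extremal statement: for every fixed $k \ge 5$, a set of $n$ points in $\mathbb{P}^2(\mathbb{R})$ with no $k+1$ collinear determines only $o(n^2)$ lines containing exactly $k$ of the points.

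Second, I would record the quadratic bound. The Szemer\'edi--Trotter theorem gives that the number of lines incident to at least $k$ of $n$ points is $O(n^2/k^3 + n/k)$; since under our hypothesis ``at least $k$'' and ``exactly $k$'' coincide, this yields $t^{\mathcal{A}_\nu}_k = O(n^2/k^3)$ and hence $\limsup_\nu t^{\mathcal{A}_\nu}_k/n^2 \le C/k^3$ for an absolute constant $C$. Combined with identity (1) and Melchior's inequality (5) of Lemma \ref{basic comb t vector}, this already pins the relevant part of the $t$-vector into a bounded window. It does \emph{not}, however, produce the limit $0$: the Szemer\'edi--Trotter bound is tight up to the constant even without the maximality hypothesis, and indeed for $k=2$ (simple arrangements) and $k=3$ (the orchard configurations) the trivial bound $t^{\mathcal{A}}_k \le \binom{n}{2}/\binom{k}{2}$ is asymptotically attained, forcing the ratio to the positive limit $1/\bigl(k(k-1)\bigr)$. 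This is precisely why the conjecture is only asserted for $k \ge 5$, and it shows that no argument using incidence counts or the linear relations of Lemma \ref{basic comb t vector} alone can succeed.

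Third --- the crux --- I would try to turn the maximality of $k$ into a density decrement. Suppose toward a contradiction that $t^{\mathcal{A}_\nu}_k \ge \epsilon n^2$ along a subsequence. Distributing the $k\,t^{\mathcal{A}_\nu}_k \ge \epsilon k n^2$ incidences over the $n$ points by pigeonhole produces points of very high degree and, more importantly, a positive-density subfamily of exactly-$k$-point lines concentrated on few points; the aim would be to show that such a concentration forces a rigid, low-complexity configuration (lattice-like, or supported on a cubic curve, as in the analysis of \cite{greentao}), and that any such configuration must contain a line through more than $k$ points, contradicting maximality. The main obstacle is exactly this structural step: a stability/structure theorem for point sets carrying $\Omega(n^2)$ lines of richness exactly $k$ with no richer line. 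Such input is available only for small $k$, where the conclusion is in fact false, while for $k \ge 5$ it is not known; supplying it, rather than the bookkeeping around it, is where the difficulty genuinely lies. I therefore expect this plan to reduce the conjecture cleanly to the structural statement, but not, with present techniques, to resolve it.
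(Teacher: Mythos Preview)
There is nothing to compare against: in the paper this statement is a \emph{Conjecture}, not a theorem. The environment \texttt{theoremb} is declared in the preamble as ``Conjecture'', and the statement is quoted from Erd\H{o}s--Purdy \cite{erd} as an open problem. The paper makes no attempt to prove it; on the contrary, the corollary immediately following it uses Theorem~\ref{sechser struc} to show that if the set $M$ of simplicial arrangements with $m(\mathcal{A})\le 6$ and real-splitting characteristic polynomial were infinite, then this conjecture would \emph{fail} for $k=6$. So the paper is exhibiting a potential route to a counterexample, not asserting the conjecture.

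Your analysis is entirely consistent with this state of affairs. You correctly observe that Szemer\'edi--Trotter together with the linear relations of Lemma~\ref{basic comb t vector} yield only $t^{\mathcal{A}}_k = O(n^2/k^3)$, not $o(n^2)$, and you honestly conclude that the structural/stability input needed to upgrade this is not currently available. That is precisely the situation: the conjecture is open, and your proposal reduces it to another unproven structural statement. There is no gap in your reasoning beyond the one you yourself flag; there is simply no proof in the paper to match.
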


Then Theorem \ref{sechser struc} yields the following corollary, which closes this paper.

\begin{corollary}
If $|M|=\infty$ then the above conjecture is false for $k=6$.
\end{corollary}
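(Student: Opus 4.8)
The plan is to derive a contradiction from the assumption that $M$ is infinite and the conjecture holds for $k=6$. Since every element of $M$ is a linear simplicial arrangement with multiplicity at most six whose characteristic polynomial splits over $\mathbb{R}$, if $|M| = \infty$ then $M$ must contain arrangements of arbitrarily large size: indeed, for each fixed $n$ there are only finitely many isomorphism classes of arrangements on $n$ lines (the combinatorial type is determined by finitely much data). So I would first extract a sequence $(\mathcal{A}_\nu)_{\nu \in \mathbb{N}}$ of pairwise non-isomorphic arrangements in $M$ with $|\mathcal{A}_\nu| \to \infty$.

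Next I would invoke Theorem \ref{sechser struc}: for any simplicial arrangement $\mathcal{A}$ with $m(\mathcal{A}) \leq 6$ and splitting characteristic polynomial, inequality (16) gives the lower bound $t^\mathcal{A}_6 \geq \frac{n^2 - 46n + 225}{48}$. Applying this with $n = |\mathcal{A}_\nu|$ yields $t^{\mathcal{A}_\nu}_6 \geq \frac{|\mathcal{A}_\nu|^2 - 46|\mathcal{A}_\nu| + 225}{48}$, hence
\[
\frac{t^{\mathcal{A}_\nu}_6}{|\mathcal{A}_\nu|^2} \geq \frac{1}{48} - \frac{46}{48 |\mathcal{A}_\nu|} + \frac{225}{48 |\mathcal{A}_\nu|^2} \xrightarrow{\nu \to \infty} \frac{1}{48} > 0.
\]
This directly contradicts the conclusion $\lim_{\nu \to \infty} \frac{t^{\mathcal{A}_\nu}_6}{|\mathcal{A}_\nu|^2} = 0$ demanded by the conjecture for $k = 6$ (noting that each $\mathcal{A}_\nu \in \mathfrak{A}_6$ since it has multiplicity at most six). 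Therefore the conjecture must be false for $k = 6$.

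The main (and really only) subtle point is the first step — justifying that infinitely many isomorphism classes force unbounded size. This is a standard finiteness fact for pseudoline arrangements (the isomorphism type is captured by the underlying oriented matroid / the cell decomposition, of which there are finitely many on a fixed ground set), and one could alternatively cite it or note that $M$ consists of \emph{linear} arrangements, for which the bound is classical. Everything after that is an immediate substitution into inequality (16) from Theorem \ref{sechser struc}, so no genuine obstacle remains. I would keep the writeup short, essentially just displaying the inequality chain above.

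\begin{proof}
Suppose $|M| = \infty$. Since for each fixed $n$ there are only finitely many isomorphism classes of pseudoline arrangements on $n$ lines, the set $M$ must contain arrangements of arbitrarily large cardinality; choose a sequence $(\mathcal{A}_\nu)_{\nu \in \mathbb{N}}$ in $M$ with $\lim_{\nu \to \infty} |\mathcal{A}_\nu| = \infty$. Each $\mathcal{A}_\nu$ is a nontrivial simplicial arrangement with $m(\mathcal{A}_\nu) \leq 6$ whose characteristic polynomial splits over $\mathbb{R}$, so inequality (16) of Theorem \ref{sechser struc} applies and gives, with $n_\nu := |\mathcal{A}_\nu|$,
\begin{align*}
t^{\mathcal{A}_\nu}_6 \geq \frac{n_\nu^2 - 46 n_\nu + 225}{48}.
\end{align*}
Dividing by $n_\nu^2$ yields
\begin{align*}
\frac{t^{\mathcal{A}_\nu}_6}{n_\nu^2} \geq \frac{1}{48} - \frac{46}{48 n_\nu} + \frac{225}{48 n_\nu^2},
\end{align*}
and the right-hand side tends to $\frac{1}{48} > 0$ as $\nu \to \infty$. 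Since each $\mathcal{A}_\nu$ lies in $\mathfrak{A}_6$, this contradicts the conclusion $\lim_{\nu \to \infty} t^{\mathcal{A}_\nu}_6 / n_\nu^2 = 0$ predicted by the conjecture for $k = 6$. Hence the conjecture is false for $k = 6$.
\end{proof}
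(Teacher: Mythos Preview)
Your proof is correct and follows essentially the same approach as the paper: extract a sequence in $M$ with $|\mathcal{A}_\nu| \to \infty$ and apply the lower bound in inequality (16) of Theorem \ref{sechser struc} to see that $t^{\mathcal{A}_\nu}_6/|\mathcal{A}_\nu|^2$ is bounded below by a quantity tending to $\tfrac{1}{48}$. The paper additionally cites the upper bound in (16) to pin down the limit as exactly $\tfrac{1}{48}$, but this refinement is not needed for the corollary itself.
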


\begin{proof}
By Theorem \ref{sechser struc} we have $\frac{|\mathcal{A}|^2-46|\mathcal{A}|+225}{48}  \leq t^\mathcal{A}_6 \leq \frac{|\mathcal{A}|^2+2|\mathcal{A}|-47}{48}$, if $\mathcal{A} \in M$. So if $|M|=\infty$ we find a sequence $\left( \mathcal{A}_\nu \right)_{\nu \in \mathbb{N}}$ such that $\lim_{\nu \to \infty} |\mathcal{A}_\nu| = \infty$ and $\mathcal{A}_\nu \in M$ for every $\nu \in \mathbb{N}$. But then $\lim_{\nu \to \infty} \frac{t^\mathcal{A_\nu}_k}{|\mathcal{A}_\nu|^2}= \frac{1}{48}>0$.  
\end{proof}

\end{subsection}

\end{section}
\def\cprime{$'$}
\providecommand{\bysame}{\leavevmode\hbox to3em{\hrulefill}\thinspace}
\providecommand{\MR}{\relax\ifhmode\unskip\space\fi MR }
\providecommand{\MRhref}[2]{%
  \href{http://www.ams.org/mathscinet-getitem?mr=#1}{#2}
}
\providecommand{\href}[2]{#2}

\end{document}